\numberwithin{equation}{section}
\newtheorem{theorem}{Theorem}[section]
\newtheorem{corollary}[theorem]{Corollary}
\newtheorem{lemma}[theorem]{Lemma}
\newtheorem{proposition}[theorem]{Proposition}
\newtheorem{example}[theorem]{Example}
\theoremstyle{definition}
\newtheorem{definition}[theorem]{Definition}
\newcommand{\Hom}{\operatorname{Hom}}
\newcommand{\C}{\mathbf{C}}
\newcommand{\F}{\mathbf{F}}
\newcommand{\Q}{\mathbf{Q}}
\newcommand{\Z}{\mathbf{Z}}
\newcommand{\h}{\mathfrak{h}}
\newcommand{\g}{\mathfrak{g}}
\title[Borcherds-Bozec algebras]
{Borcherds-Bozec algebras, Root multiplicities \\ and the Schofield
construction}
\author[Seok-Jin Kang]
{Seok-Jin Kang$^{*}$}
\address{Research Institute of Computers, Information and Communication,
Pusan National University, 2 Busandaehak-ro, Pusan 46241, Korea}
         \email{soccerkang@hotmail.com}
\thanks{$^{*}$ This research was supported by Ministry of Culture, Sports and Tourism,
Korea Creative Content Agency in the Culture Technology Research and
Development Program 2017 and Basic Science Research Program of NRF
(Korea) under grant No. 2015R1D1A1A01059643.}
\keywords{Borcherds-Bozec algebra, root multiplicity, Monster
Borcherds-Bozec algebra, quiver variety, Schofield construction}
\subjclass[2010] {17B37, 17B67, 16G20}
\begin{document}

\begin{abstract}

Using the twisted denominator identity, we derive a closed form root
multiplicity formula for all symmetrizable Borcherds-Bozec algebras
and discuss its applications including the case of Monster
Borcherds-Bozec algebra. In the second half of the paper, we provide
the Schofield construction of symmetric Borcherds-Bozec algebras.

\end{abstract}

\maketitle

\section*{Introduction}

\vskip 3mm

The aims of this paper are to investigate the root multiplicities of
Borcherds-Bozec algebras and to provide the Schofield construction
of these algebras. The Borcherds-Bozec algebras (actually, their
quantum deformation) arise as a natural algebraic structure relevant
to the theory of perverse sheaves on the representation varieties of
quivers with loops (see \cite{Bozec2014b, Bozec2014c, BSV2016} for
more details).

\vskip 3mm

To begin with, let us briefly clarify the terms of Kac-Moody
algebras, Borcherds algebras and Borcherds-Bozec algebras. The {\it
Kac-Moody algebras} were introduced independently by V. G. Kac and
R. V. Moody as a generalization of finite dimensional complex
semi-simple Lie algebras \cite{Kac68, Moody68}. In \cite{Serre}, one
can find J. P. Serre's presentation of finite dimensional complex
semi-simple Lie algebras by generators and relations associated with
positive definite Cartan matrices. What Kac and Moody did was to
remove the restriction of positive definiteness of Cartan matrices
and they were able to construct a new family of (mostly infinite
dimensional) Lie algebras, which may have roots with norms $\le 0$,
the {\it imaginary roots}. It turned out that, together with Kac's
discovery of the Weyl-Kac character formula \cite{Kac74},  Kac-Moody
algebras (affine Lie algebras in particular) have a lot of
significant applications to various branches of mathematics and
mathematical physics such as number theory, combinatorics and
statistical mechanics. Still, the generators of Kac-Moody algebras
are positive and negative simple root vectors having positive norms.

\vskip 3mm

In \cite{Bor88}, R. E. Borcherds gave a generalized version of
Kac-Moody algebras, the {\it Borcherds algebras}, associated with
{\it Borcherds-Cartan matrices}. These matrices are allowed to have
non-positive diagonal entries, which means the Borcherds algebras
may have simple roots with norms $\le 0$, the {\it imaginary simple
roots}. Thus we have a decomposition of index set for the simple
roots: $I=I^{\text{re}} \sqcup I^{\text{im}}$, where
$I^{\text{re}}=\{i \in I \mid a_{ii}=2\}$, the set of {\it real
indices}, and $I^{\text{im}} = \{ i\in I \mid a_{ii} \le 0\}$, the
set of {\it imaginary indices}. In the literature, the Borcherds
algebras are often called the {\it generalized Kac-Moody algebras}.
A special case of Borcherds algebras, the {\it Monster Lie algebra},
played an important role in Borcherds' proof of the Moonshine
Conjecture \cite{Bor92}. Nevertheless, the Borcherds algebras are
generated by positive and negative simple root vectors even though
they may have non-positive norms and multiplicities $>1$.

\vskip 3mm

The {\it Borcherds-Bozec algebras} form a further generalization of
Kac-Moody algebras. Their presentations are still associated with
Borcherds-Cartan matrices but the generators are {\it higher degree}
positive and negative  simple root vectors in the sense that their
degrees are integral multiples of simple roots. Accordingly, the
defining relations should be modified. In particular, when we deal
with quantum deformation, certain Drinfel'd-type relations are
included. We will denote by $I^{\infty} = (I^{\text{re}} \times
\{1\}) \cup (I^{\text{im}} \times \Z_{>0})$ the index set for higher
degree positive simple root vectors. Even though it seems more
appropriate to call them the Borcherds-Bozec-Kac-Moody algebras, for
simplicity, we will just use the term Borcherds-Bozec algebras.

\vskip 3mm

The first step toward the understanding of an algebraic object would
be to find a way of measuring its {\it size}. It can be shown that
every Borcherds-Bozec algebra has a decomposition into a direct sum
of homogeneous subspaces, the {\it root spaces}, and the dimensions
of such subspaces are called the {\it root multiplicities}. Thus one
gets naturally interested in (closed form or recursive form) the
{\it root multiplicity formulas}.

\vskip 3mm

In this paper, we give a closed form root multiplicity formula for
{\it all symmetrizable} Borcherds-Bozec algebras. To be more
precise, let $I$ be an index set and let $A=(a_{ij})_{i,j \in I}$ be
a Borcherds-Cartan matrix. As we have seen before, we have a
decomposition $I=I^{\text{re}} \sqcup I^{\text{im}}$. Take a finite
subset $J$ of $I^{\text{re}}$ so that the submatrix
$A_{J}=(a_{ij})_{i,j \in J}$ gives rise to a usual Kac-Moody
algebra. We denote by $\g$ the full Borcherds-Bozec algebra and
$\g_{0}^{(J)}$ the Kac-Moody algebra inside $\g$. In \cite{BSV2016},
T. Bozec, O. Schiffmann and E. Vasserot derived a character formula
for integrable highest weight $\g$-modules. Combining this with the
Weyl-Kac character formula, we prove the {\it twisted denominator
identity} for Borcehrds-Bozec algebras (Proposition
\ref{prop:twisted}), from which we obtain a closed form root
multiplicity formula (Theorem \ref{thm:mult}).

\vskip 3mm

Our formula has the following advantages. First, one can study the
structure of the Borcherds-Bozec algebra $\g$ as an integrable
representation of the Kac-Moody algebra $\g_{0}^{(J)}$. Second,
making an appropriate choice of $J$ can simplify the calculation and
provide a deeper understanding of root multiplicities. Finally,
different choices of $J$ would yield different expressions of the
same quantity, which naturally give rise to combinatorial
identities. We do not pursue these perspectives further in this
paper. Instead, we illustrate how one can apply our root
multiplicity formula in actual computation with the examples of rank
1 algebras, rank 2 algebras and the {\it Monster Borcherds-Bozec
algebra}. It should be pointed out that, in the symmetric case, the
root multiplicities are also given by the constant terms of
1-nilpotent Kac polynomials \cite{BSV2016}.

\vskip 3mm

The second half of this paper is devoted to the {\it Schofield
construction} of {\it symmetric} Borcherds-Bozec algebras (cf.
\cite{Sch}). Let $Q$ be a locally finite quiver with loops. Then one
can associate a symmetric Borcherds-Cartan matrix $A_{Q}$ and hence
a symmetric Borcherds-Bozec algebra $\g_{Q}$ as well. Using the
Euler characteristic of the projective variety of 1-nilpotent flags
in each representation of $Q$, we define a bilinear pairing
$$\langle \ , \ \rangle : {\mathscr E} \times Rep(Q)
\rightarrow \C,$$ where $\mathscr{E}$ is the free associative
algebra on the alphabet $\{S_{i,l} \mid (i,l) \in I^{\infty}\}$ and
$Rep(Q)$ denotes the category of representations of $Q$.

\vskip 3mm

Let ${\mathscr I}$ be the radical of the pairing $\langle \ , \
\rangle$ in ${\mathscr E}$. We show that ${\mathscr I}$ is both an
ideal and a co-ideal of ${\mathscr E}$ under the co-multiplication
$\Delta: {\mathscr E} \rightarrow {\mathscr E} \times {\mathscr E}$
given by $$S_{i,l} \mapsto S_{i,l} \otimes 1 + 1 \otimes S_{i,l} \ \
\text{for} \ (i,l) \in I^{\infty}.$$ Hence the quotient algebra
${\mathscr R} := {\mathscr E} \big/ {\mathscr I}$ becomes a
bi-algebra. Our main theorem (Theorem \ref{thm:main_c}) states that
${\mathscr R}$ is isomorphic to the positive part of the universal
enveloping algebra of $\g_{Q}$. Moreover, we show that the Lie
algebra ${\mathscr L}$ consisting of primitive elements in
${\mathscr R}$ is isomorphic to the positive part $\g_{Q}^{+}$ of
$\g_{Q}$.

\vskip 3mm

One of the key ingredients of our proof of the Serre relations in
${\mathscr R}$ is that there is a 1-1 correspondence between the set
of positive roots of $\g_{Q}$ and the set of dimension vectors of
1-nilpotent indecomposable representations of $Q$ over $\C$. The
argument of proving this statement is due to O. Schiffmann. We also
use Bozec's construction of $U^{+}(\g_{Q})$ in terms of certain
constructible functions on a Lagrangian subvariety of {\it strongly
semi-nilpotent representations} of $Q$, which is in turn based on
the theory of perverse sheaves and crystal bases for quantum
Borcherds-Bozec algebras \cite{Bozec2014b, Bozec2014c, Kas91}.

\vskip 3mm


We would like to express our sincere gratitude to Professor Bernard
Leclerc and Professor Olivier Schiffmann for many valuable
discussions and suggestions. Without their help, this work would not
have been materialized.

\vskip 10mm

\section{Borcherds-Bozec algebras}

\vskip 2mm

Let $I$ be an index set possibly countably infinite. An
integer-valued matrix $A=(a_{ij})_{i,j \in I}$ is called an {\it
even symmetrizable Borcherds-Cartan matrix} if it satisfies the
following conditions:

\begin{itemize}
\item[(i)] $a_{ii}=2, 0, -2, -4, ...$,

\item[(ii)] $a_{ij}\le 0$ for $i \neq j$,

\item[(iii)] $a_{ij}=0$ if and only if $a_{ji}=0$,

\item[(iv)] there exists a diagonal matrix $D=\text{diag} (s_{i} \in
\Z_{>0} \mid i \in I)$ such that $DA$ is symmetric.
\end{itemize}

\vskip 3mm

\noindent Set $I^{\text{re}}=\{i \in I \mid a_{ii}=2 \}$,
$I^{\text{im}}=\{i \in I \mid a_{ii} \le 0\}$ and
$I^{\text{iso}}=\{i \in I \mid a_{ii}=0 \}$.

\vskip 3mm

A {\it Borcherds-Cartan datum} consists of :

\ \ (a) an even symmetrizable Borcherds-Cartan matrix
$A=(a_{ij})_{i,j \in I}$,

\ \ (b) a free abelian group $P$, the {\it weight lattice},

\ \ (c) $\Pi=\{\alpha_{i} \in P  \mid i \in I \}$, the set of {\it
simple roots},

\ \ (d) $P^{\vee} := \Hom(P, \Z)$, the {\it dual weight lattice},

\ \ (e) $\Pi^{\vee}=\{h_i \in P^{\vee} \mid i \in I \}$, the set of
{\it simple coroots}

\vskip 2mm

\noindent satisfying the following conditions

\vskip 2mm

\begin{itemize}

\item[(i)] $\langle h_i, \alpha_j \rangle = a_{ij}$ for all $i,
j \in I$,

\item[(ii)] $\Pi$ is linearly independent,

\item[(iii)] for each $i \in I$, there exists an
element $\Lambda_{i} \in P$ such that $$\langle h_i, \Lambda_j
\rangle = \delta_{ij} \ \ \text{for all} \ i, j \in I.$$
\end{itemize}

\vskip 2mm

We would like to mention that given a Borcherds-Cartan matrix, such
a Borcherds-Cartan datum always exists. The $\Lambda_i$'s $(i \in
I)$ are called the {\it fundamental weights}.

\vskip 3mm

We denote by
$$P^{+}:=\{\lambda \in P \mid \langle h_i, \lambda \rangle \ge 0 \
\text{for all} \ i \in I \}$$ the set of {\it dominant integral
weights}. The free abelian group ${\mathsf Q}:= \bigoplus_{i \in I}
\Z \alpha_i$ is called the {\it root lattice}. Set ${\mathsf Q}_{+}
= \sum_{i\in I} \Z_{\ge 0} \alpha_i$. For $\beta = \sum k_i \alpha_i
\in {\mathsf Q}_{+}$, we define its {\it height} to be
$|\beta|:=\sum k_i$.

\vskip 3mm

Set $\mathfrak{h} = \C \otimes P^{\vee}$. For $\lambda, \mu \in
{\mathfrak h}^{*}$, we define a partial ordering by $\lambda \ge
\mu$ if and only if $\lambda - \mu \in {\mathsf Q}_{+}$. Since $A$
is symmetrizable, there exists a non-degenerate symmetric bilinear
form $( \ , \ )$ on ${\mathfrak h}^{*}$ satisfying
$$(\alpha_{i}, \lambda) = s_{i} \langle h_{i}, \lambda \rangle \quad
\text{for all} \ \ \lambda \in {\mathfrak h}^{*}.$$

\vskip 3mm

Let $I^{\infty}:= (I^{\text{re}} \times \{1\}) \cup (I^{\text{im}}
\times \Z_{>0})$. We will often write $i$ for $(i,1)$ $(i \in I^{\text{re}})$.

\vskip 3mm

\begin{definition}
The {\it Borcherds-Bozec algebra} $\g$ associated with a
Borcherds-Cartan datum $(A, P, \Pi, P^{\vee}, \Pi^{\vee})$ is the
Lie algebra over $\C$ generated by the elements $e_{il}$, $f_{il}$
$((i,l) \in I^{\infty})$ and $\h$ with defining relations
\begin{equation}\label{eq:bozec-alg}
\begin{aligned}
& [h, h']=0 \ \ \text{for} \ h, h' \in \h,\\
& [e_{ik}, f_{jl}] = k \, \delta_{ij} \, \delta_{kl} \, h_{i} \ \
\text{for} \ i,j \in I, k, l \in \Z_{>0},\\
& [h, e_{jl}]=l \, \langle h, \alpha_{j} \rangle \, e_{jl}, \quad
 [h, f_{jl}]= -l \, \langle h, \alpha_{j} \rangle \, f_{jl}, \\
& (\text{ad} e_{i})^{1-la_{ij}}(e_{jl}) = 0 \ \ \text{for} \ i\in
I^{\text{re}}, \, i \neq (j,l), \\
& (\text{ad} f_{i})^{1-la_{ij}}(f_{jl}) = 0 \ \ \text{for} \ i\in
I^{\text{re}}, \, i \neq (j,l), \\
& [e_{ik}, e_{jl}] = [f_{ik}, f_{jl}] =0 \ \ \text{for} \ a_{ij}=0.
\end{aligned}
\end{equation}
\end{definition}

Set $\text{deg}\, h =0$ for $h \in {\mathfrak h}$ and $\text{deg}\,
e_{il} = l \alpha_{i}$, $\text{deg}\, f_{il} = -l \alpha_{i}$ for
$(i,l) \in I^{\infty}$. Then $\g$ has a {\it root space
decomposition}
$$\g = \bigoplus_{\alpha \in \mathsf{Q}} \g_{\alpha},$$
where
$$\g_{\alpha} = \{x \in \g \mid [h,x] = \langle h, \alpha \rangle
\,x \ \ \text{for all} \ h \in \h\}.$$

\vskip 3mm

An element $\alpha \in \mathsf{Q} \setminus \{0\}$ is called a {\it
root} if $\g_{\alpha} \neq 0$. It can be shown that $\dim
\g_{\alpha} = \dim \g_{-\alpha}$, which is called the {\it root
multiplicity} of $\alpha$. Set $\Delta_{+}=\{\alpha \in
{\mathsf{Q}_{+}} \mid \g_{\alpha} \neq 0\}$ and $\Delta_{-} = -
\Delta_{+}$, the set of {\it positive} and {\it negative roots},
respectively.

\vskip 3mm

We say that a root $\alpha$ is {\it real} if $(\alpha, \alpha)
>0$ and {\it imaginary} if $(\alpha, \alpha) \le 0$. We denote by
$\Delta^{\text{re}}$ and $\Delta^{\text{im}}$, the set of real and
imaginary roots, respectively. Set
$\Delta_{+}^{\text{re}}=\Delta_{+}\cap \Delta^{\text{re}}$,
$\Delta_{+}^{\text{im}} = \Delta_{+} \cap \Delta^{\text{im}}$.

\vskip 3mm

For $i \in I^{\text{re}}$, we define the {\it simple reflection}
$r_{i} \in GL(\h^{*})$ by
$$r_{i} (\lambda) = \lambda - \langle h_{i}, \lambda \rangle
\alpha_{i} \ \ \text{for} \ \lambda \in \h^{*}.$$ The subgroup $W$
of $GL(\h^*)$ generated by $r_{i}$ $(i \in I^{\text{re}})$ is called
the {\it Weyl group} of $\g$. One can easily verify that the
symmetric bilinear form $(\ , \ )$ is $W$-invariant.

\vskip 7mm

\section{Representation theory}

\vskip 2mm

Let $\g$ be a Borcherds-Bozec algebra, let $U(\g)$ be its universal
enveloping algebra and let $M$ be a $\g$-module. We say that $M$ has
a {\it weight space decomposition} if
$$M = \bigoplus_{\mu \in \h^*} M_{\mu}, \ \ \text{where}
\ \ M_{\mu} = \{ m \in M \mid h \cdot m = \langle h, \mu \rangle m \
\text{for all} \ h \in \h \}.$$ We denote $\text{wt}(M):=\{\mu \in
\h^* \mid M_{\mu} \neq 0 \}$.

\vskip 3mm

A $\g$-module $V$ is called a {\it highest weight module with
highest weight $\lambda \in \h^*$} if there is a non-zero vector
$v_{\lambda} \in V$ such that
\begin{itemize}
\item[(i)] $V=U(\g) v_{\lambda}$,

\item[(ii)] $h \cdot v_{\lambda} = \langle h, \lambda \rangle
v_{\lambda}$ for all $h \in \h$,

\item[(iii)] $e_{il} \cdot v_{\lambda} =0$ for all $(i,l) \in
I^{\infty}$.
\end{itemize}

\vskip 2mm

\noindent Note that a highest weight module $V$ with highest weight
$\lambda$ has a weight space decomposition $V = \bigoplus_{\mu \le
\lambda} V_{\mu}$.

\vskip 3mm

For $\lambda \in \h^*$, let $J(\lambda)$ be the left ideal of
$U(\g)$ generated by the elements $h-\langle h, \lambda \rangle
\mathbf{1}$ for $h \in \h$ and $e_{il}$ for $(i,l) \in I^{\infty}$.
Then $M(\lambda):=U(\g) \big / J(\lambda)$ is a highest weight
$\g$-module with highest weight $\lambda$, called the {\it Verma
module}. It can be shown that $M(\lambda)$ has a unique maximal
submodule $R(\lambda)$ and every highest weight $\g$-module with
highest weight $\lambda$ is a homomorphic image of $M(\lambda)$. We
denote by $V(\lambda):=M(\lambda) \big / R(\lambda)$ the irreducible
quotient.

\vskip 3mm

\begin{proposition} \label{prop:hwmodule} \
{\rm  Let $\lambda \in P^{+}$ and $V(\lambda)=U(\g) v_{\lambda}$ be
the irreducible highest weight $\g$-module. Then we have
\begin{equation}\label{eq:hwmodule}
\begin{aligned}
& f_{i}^{\langle h_{i}, \lambda \rangle + 1} v_{\lambda} =0 \ \
\text{for} \ i \in I^{\text{re}}, \\
& f_{il} \, v_{\lambda} =0 \ \ \text{for} \ (i,l) \in I^{\infty} \
\text{with} \ \langle h_{i}, \lambda \rangle =0.
\end{aligned}
\end{equation} }
\end{proposition}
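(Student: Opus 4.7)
The plan is a standard ``maximal vector'' argument: in each case I want to show that the putative vector, call it $w$, is annihilated by every $e_{jk}$ with $(j,k)\in I^{\infty}$, hence is either zero or generates a proper highest weight submodule of $V(\lambda)$; irreducibility of $V(\lambda)$ then forces $w=0$.

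For the first identity, fix $i\in I^{\text{re}}$ and set $n=\langle h_i,\lambda\rangle\ge 0$ and $w=f_i^{\,n+1}v_\lambda$. I first show $e_{jk}w=0$ for every $(j,k)\in I^{\infty}$. The defining relation $[e_{jk},f_{i,1}]=k\,\delta_{ij}\delta_{k1}h_j$ gives $[e_{jk},f_i]=0$ whenever $(j,k)\ne(i,1)$; combined with $e_{jk}v_\lambda=0$ this yields $e_{jk}w=f_i^{\,n+1}e_{jk}v_\lambda=0$. For the remaining case $(j,k)=(i,1)$, I use the elementary $\mathfrak{sl}_2$ identity inside $U(\mathfrak g)$,
\[
[e_i,f_i^{\,n+1}]=(n+1)\,f_i^{\,n}\bigl(h_i-n\bigr),
\]
which is proved by induction on $n$ from $[e_i,f_i]=h_i$ and $[h_i,f_i]=-2f_i$; applied to $v_\lambda$ it gives $e_iw=(n+1)f_i^{\,n}(n-n)v_\lambda=0$. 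Thus $w$ is a highest weight vector of weight $\lambda-(n+1)\alpha_i<\lambda$, so $U(\mathfrak g)w$ is a proper submodule and, by the irreducibility of $V(\lambda)$, $w=0$.

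For the second identity, fix $(i,l)\in I^{\infty}$ with $\langle h_i,\lambda\rangle=0$ and set $w=f_{il}v_\lambda$. Again I check $e_{jk}w=0$ for every $(j,k)\in I^{\infty}$ using the defining bracket
\[
[e_{jk},f_{il}]=k\,\delta_{ij}\delta_{kl}\,h_i.
\]
When $(j,k)\neq(i,l)$ the bracket vanishes, so $e_{jk}w=f_{il}e_{jk}v_\lambda=0$. When $(j,k)=(i,l)$ we get $e_{il}w=f_{il}e_{il}v_\lambda+l\,h_iv_\lambda=l\,\langle h_i,\lambda\rangle v_\lambda=0$ by hypothesis. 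Hence $w$ is a highest weight vector of strictly lower weight $\lambda-l\alpha_i$, and the same irreducibility argument forces $w=0$.

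There is no substantive obstacle beyond setting up the two calculations correctly; the only nontrivial ingredient is the $\mathfrak{sl}_2$ commutation identity used in the first part, and the care required in the second part is to read off the bracket $[e_{jk},f_{il}]$ from \eqref{eq:bozec-alg} and observe that the assumption $\langle h_i,\lambda\rangle=0$ is precisely what kills the one surviving term.
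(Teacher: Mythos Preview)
Your proof is correct and follows essentially the same approach as the paper's: show that the vector in question is a highest weight vector of strictly lower weight, hence generates a proper submodule, contradicting irreducibility. The paper's argument is more terse and leaves the verification that $e_{jk}w=0$ implicit, whereas you carry out this check explicitly using the defining relations and the $\mathfrak{sl}_2$ commutation identity; the underlying idea is identical.
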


\vskip 3mm

\begin{proof}
If $f_{i}^{\langle h_{i}, \lambda \rangle +1} v_{\lambda} \neq 0$
for $i \in I^{\text{re}}$, then $f_{i}^{\langle h_{i}, \lambda
\rangle +1} v_{\lambda}$ would generate a submodule of $V(\lambda)$
with highest weight $\lambda - (\langle h_{i}, \lambda \rangle +1)
\alpha_{i} < \lambda$, a contradiction.

\vskip 3mm

Similarly, if $\langle h_{i}, \lambda \rangle =0$, $f_{il} \cdot
v_{\lambda}$ would generate a submodule with highest weight $\lambda
- l \alpha_{i} < \lambda$, which is also a contradiction.
\end{proof}

\vskip 3mm

\begin{proposition} \label{prop:imaginary} \ {\rm
Let $\lambda \in P^{+}$ and $V(\lambda)$ be the irreducible highest
weight $\g$-module. If $\mu \in \text{wt}(V(\lambda))$ and $i \in
I^{\text{im}}$, the following statements hold.

\vskip 3mm

(a) $\langle h_{i}, \mu \rangle \in \Z_{\ge 0}$.

\vskip 2mm

(b) If $\langle h_{i}, \mu \rangle =0$,  $V(\lambda)_{\mu + l
\alpha_{i}} =0$ for all $l \in \Z_{>0}$.

\vskip 2mm

(c) If $\langle h_{i}, \mu \rangle =0$, then $f_{il}\,
(V(\lambda)_{\mu})=0$ for all $l \in \Z_{>0}$.

\vskip 2mm

(d) If $\langle h_{i}, \mu \rangle \le -l a_{ii}$, then $e_{il}\,
(V(\lambda)_{\mu}) =0$ for all $l \in \Z_{>0}$.
 }

\begin{proof} \
Note that $\mu = \lambda - \beta$ for some $\beta \in
\mathsf{Q}_{+}$. Write $\beta = \sum_{j \in I} k_{j} \alpha_{j}$ for
$k_{j} \in \Z_{\ge 0}$. Then $\langle h_{i}, \mu \rangle = \langle
h_{i}, \lambda \rangle - \sum_{j\in I} k_{j} a_{ij} \ge 0$, which
proves (a).

\vskip 3mm

To prove (b), set $\text{supp}\, \beta := \{j \in I \mid k_{j} > 0
\}$. If $\langle h_{i}, \mu \rangle =0$, then $\langle h_{i},
\lambda \rangle =0$ and $a_{ij} =0$ whenever $j \in \text{supp}\,
\beta$. However, if $i \in \text{supp}\, \beta$, since $\langle
h_{i}, \lambda \rangle =0$, there is $j \in \text{supp}\, \beta$
with $j \neq i$ such that $a_{ij} \neq 0$, a contradiction. Hence $i
\notin \text{supp}\, \beta$ and $\mu + l \alpha_{i} \notin
\text{wt}(V(\lambda))$ for all $l \in \Z_{>0}$.

\vskip 3mm

For each $(i,l) \in I^{\infty}$, let $\g_{(i,l)}$ be the subalgebra
of $\g$ generated by $e_{il}$, $f_{il}$, $h_i$, which is isomorphic
to the Lie algebra $sl(2, \C)$ or the Heisenberg algebra. If
$\langle h_{i}, \mu \rangle =0$ and $v \in V(\lambda)_{\mu}$, we
have  $e_{il}\, v =0$ by (b). Then $v$ generates a 1-dimensional
$\g_{(i,l)}$-module, which implies $f_{il} \, v =0$. Thus (c) is
proved.

\vskip 3mm

Let us prove (d). Suppose $\langle h_{i}, \mu \rangle \le -l
a_{ii}$. If $\langle h_{i}, \mu \rangle < -l a_{ii}$, then $\langle
h_{i}, \mu + l \alpha_{i} \rangle < 0$ and $\mu + l \alpha_{i}
\notin \text{wt}(V(\lambda))$ by (a). If $\langle h_{i}, \mu \rangle
= -l a_{ii}$, write
$$\mu = \lambda - \beta = \lambda - k_{i} \alpha_{i} - \sum_{j \neq
i} k_{j} \alpha_{j}.$$ If $i \notin \text{supp}\, \beta$, we are
done. If $i \in \text{supp}\, \beta$ and $\mu + l \alpha_{i} \in
\text{wt}(V(\lambda))$, then $k_{i} \ge l$ and
$$\langle h_{i}, \mu + l \alpha_{i} \rangle = \langle h_{i}, \lambda
\rangle - (k_{i}-l) a_{ii} -\sum_{j \neq i} k_{j} a_{ij} =0.$$ It
follows that $\langle h_{i}, \lambda \rangle =0$, $(k_{i}-l)
a_{ii}=0$ and $a_{ij}=0$ for $j \in \text{supp}\, \beta$. Since
$\langle h_{i}, \lambda \rangle =0$, there exists $j \in
\text{supp}\, \beta$ such that $j \neq i$ and $a_{ij} \neq 0$, a
contradiction. Hence $\mu + l \alpha_{i} \notin
\text{wt}(V(\lambda))$.
\end{proof}
\end{proposition}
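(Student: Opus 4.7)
The plan is to prove the four parts in sequence, each using the previous ones.

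For (a), I would write $\mu = \lambda - \beta$ with $\beta = \sum_j k_j \alpha_j \in \mathsf{Q}_+$ and expand $\langle h_i, \mu \rangle = \langle h_i, \lambda \rangle - \sum_{j \in I} k_j a_{ij}$. Each summand on the right is non-negative: $\langle h_i, \lambda \rangle \ge 0$ since $\lambda \in P^+$; $-k_j a_{ij} \ge 0$ for $j \ne i$ because $a_{ij} \le 0$; and $-k_i a_{ii} \ge 0$ because $i \in I^{\text{im}}$ forces $a_{ii} \le 0$. The result is a non-negative integer.

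For (b), the vanishing in (a) forces each non-negative summand to vanish individually: $\langle h_i, \lambda \rangle = 0$ and $a_{ij} = 0$ for every $j \in \text{supp}\,\beta$. I would then argue $i \notin \text{supp}\,\beta$ by contradiction. Any weight vector of weight $\mu$ is obtained from $v_\lambda$ by applying a word in the $f_{j,k}$; Proposition \ref{prop:hwmodule} forbids $f_{i,m} v_\lambda$ as the first letter since $\langle h_i, \lambda \rangle = 0$. So if $k_i > 0$, then before $f_{i,\cdot}$ can act non-trivially one needs an intermediate $f_{j,k}$ with $j \in \text{supp}\,\beta$, $j \ne i$, and $a_{ij} \ne 0$, in order to shift the $h_i$-eigenvalue off $0$---contradicting what was just derived. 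Hence $k_i = 0$, so $\mu + l\alpha_i$ exceeds $\lambda$ along $\alpha_i$ and $V(\lambda)_{\mu + l\alpha_i} = 0$.

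For (c), take $v \in V(\lambda)_\mu$ with $\langle h_i, \mu \rangle = 0$. By (b), $e_{im} v \in V(\lambda)_{\mu + m\alpha_i} = 0$ for every $m > 0$. The subalgebra $\g_{(i,l)} := \langle e_{il}, f_{il}, h_i\rangle$ is, for $i \in I^{\text{im}}$, isomorphic to the Heisenberg algebra when $a_{ii} = 0$ and to $\mathfrak{sl}(2,\C)$ when $a_{ii} < 0$; in either case $h_i v = 0$ and $e_{il} v = 0$ make $v$ a candidate highest-weight vector of $\g_{(i,l)}$-weight zero, and if $f_{il} v \ne 0$ then the first-letter argument of (b) applied to the weight $\mu - l\alpha_i \in \text{wt}(V(\lambda))$ produces a contradiction (since $i$ now enters the relevant support while $\langle h_i, \lambda \rangle = 0$ and $a_{ij} = 0$ on $\text{supp}(\lambda - \mu)$). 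For (d), set $\nu = \mu + l\alpha_i$; since $e_{il}(V(\lambda)_\mu) \subset V(\lambda)_\nu$, it suffices to show $V(\lambda)_\nu = 0$. The hypothesis gives $\langle h_i, \nu \rangle \le 0$, so (a) disposes of the strict case and leaves $\langle h_i, \nu \rangle = 0$, on which running the (b) analysis yields $i \notin \text{supp}(\lambda - \nu)$; but $\mu = \nu - l\alpha_i$ requires reaching a weight with $\alpha_i$-contribution $\ge l$ from $v_\lambda$, which the first-letter argument again forbids.

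The hard part is the structural first-letter lemma underlying (b) (and reused in (c) and (d)): when $\langle h_i, \lambda \rangle = 0$ and all $a_{ij}$ vanish on the relevant support, no non-zero weight vector of $V(\lambda)$ can carry a positive $\alpha_i$-coefficient. Making this rigorous means tracking the first non-trivial $f$-action in any generator word and invoking Proposition \ref{prop:hwmodule} to rule out both $f_{i,m} v_\lambda$ and any subsequent $f_{i,\cdot}$ before the $h_i$-eigenvalue has been shifted. The remaining parts are Cartan-datum bookkeeping together with a short analysis of the Heisenberg- or $\mathfrak{sl}(2)$-like subalgebra $\g_{(i,l)}$.
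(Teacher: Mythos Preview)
Your approach follows the paper's closely; both rest on the same key lemma (your ``first-letter argument''): if $\langle h_i,\lambda\rangle=0$ and $a_{ij}=0$ for every $j\neq i$ in $\text{supp}\,\beta$, then $i\notin\text{supp}\,\beta$ whenever $\lambda-\beta\in\text{wt}(V(\lambda))$. The paper simply asserts this in the course of (b) and (d); you are right to isolate it. One point to tighten: your justification invokes Proposition~\ref{prop:hwmodule} to rule out ``any subsequent $f_{i,\cdot}$ before the $h_i$-eigenvalue has been shifted,'' but that proposition only kills $f_{il}v_\lambda$, not $f_{il}u$ for an arbitrary weight vector $u$ with $\langle h_i,\text{wt}(u)\rangle=0$. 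The clean way to close this is to use the defining relation $[f_{ik},f_{jl}]=0$ when $a_{ij}=0$: it lets you commute every $f_{i,\cdot}$ in a monomial $f_{j_1,m_1}\cdots f_{j_r,m_r}v_\lambda$ all the way to the right, where Proposition~\ref{prop:hwmodule} then applies directly and annihilates the word.

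For (c) your first-letter route is in fact more complete than the paper's. The paper asserts that $v$ generates a one-dimensional $\g_{(i,l)}$-module, but $e_{il}v=0$ together with $h_iv=0$ does not by itself force $f_{il}v=0$: in both the Heisenberg case ($a_{ii}=0$) and the $\mathfrak{sl}(2,\C)$ case ($a_{ii}<0$) the highest-weight-zero cyclic module is a priori the infinite-dimensional Verma module. Your argument---that $f_{il}v\neq 0$ would put $i$ into the support of $\lambda-(\mu-l\alpha_i)$ while $\langle h_i,\lambda\rangle=0$ and $a_{ij}=0$ persist on the rest of the support, contradicting the lemma---is what actually settles the point, and you can drop the reference to the $\g_{(i,l)}$-structure entirely.
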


\vskip 3mm

\begin{definition}\label{def:Oint}
The {\it category ${\mathcal O}_{\text{int}}$} consists of
$\g$-modules $M$ such that
\begin{itemize}
\item [(i)] $M$ has a weight space decomposition $M=\bigoplus_{\mu
\in P} M_{\mu}$ with $\dim_{\C} M_{\mu} < \infty$ for all $\mu \in
P$.

\item[(ii)] there exist finitely many weights $\lambda_1, \ldots,
\lambda_s \in P$ such that $$\text{wt}(M) \subset \bigcup_{j=1}^s
(\lambda_j -{\mathsf Q}_{+}),$$

\item[(iii)] if $i \in I^{\text{re}}$, $f_{i}$ is locally nilpotent
on $M$,

\item[(iv)] if $i \in I^{\text{im}}$, we have $\langle h_i, \mu
\rangle \ge 0$ for all $\mu \in \text{wt}(M)$,

\item[(v)] if $i \in I^{\text{im}}$ and $\langle h_{i}, \mu \rangle
=0$, then $f_{il}(M_{\mu})=0$ for all $l \in \Z_{>0}$,

\item[(vi)] if $i \in I^{\text{im}}$ and $\langle h_i, \mu \rangle
\le - l a_{ii}$, then $e_{il}(M_{\mu})=0$ for all $l \in \Z_{>0}$.
\end{itemize}
\end{definition}

\vskip 3mm

Note that in this definition, $f_{il}$'s are not necessarily locally
nilpotent.

\vskip 3mm

By Proposition \ref{prop:hwmodule} and Proposition
\ref{prop:imaginary}, the irreducible highest weight $\g$-module
$V(\lambda)$ with $\lambda \in P^{+}$ belongs to the category
${\mathcal O}_{\text{int}}$. More generally, let
$V=U(\g)v_{\lambda}$ be a highest weight $\g$-module with $\lambda
\in P^{+}$ and assume that $V$ satisfies the condition
\eqref{eq:hwmodule}. Then using the results in \cite[Lemma 3.4,
Lemma 3.5]{Kac90}, one can show that $V$ lies in the category
${\mathcal O}_{\text{int}}$.

\vskip 3mm

Let $\lambda \in P^{+}$ and let $F_{\lambda}$ be the set of elements
of the form $s= \sum_{k=1}^r s_k \alpha_{i_k}$ $(r \ge 0)$ such that
\begin{itemize}
\item[(i)] $i_{k} \in I^{\text{im}}$, $s_{k} \in \Z_{>0}$ for all $1 \le
k \le r$,

\item[(ii)] $(\alpha_{i_k}, \alpha_{i_l}) =0$ for all $1 \le k,l \le
r $,

\item[(iii)] $(\alpha_{i_k}, \lambda) =0$ for all $1 \le k \le r$.
\end{itemize}

\vskip 3mm

\noindent When $r=0$, we understand $s=0$.

\vskip 3mm

For $s=\sum s_k \alpha_{i_k} \in F_{\lambda}$, we define
\begin{equation} \label{eq:sign}
\begin{aligned}
 d_{i}(s) & = \begin{cases} \# \{k \mid i_k =i \} & \text{if} \ i
\notin I^{\text{iso}}, \\
\sum_{i_k=i} s_k & \text{if} \ i \in I^{\text{iso}},
\end{cases} \\
\epsilon(s) & = \prod_{i \notin I^{\text{iso}}} (-1)^{d_i(s)}
\prod_{i \in I^{\text{iso}}} \phi(d_{i}(s))\\
& = (-1)^{\#(\text{supp}(s) \cap I \setminus I^{\text{iso}})}
\prod_{i \in I^{\text{iso}}} \phi(d_{i}(s)),
\end{aligned}
\end{equation}
where $\prod_{k=1}^{\infty} (1-q^k) = \sum_{n\ge 0} \phi(n) q^n.$

\vskip 3mm

We define
\begin{equation} \label{eq:slambda}
S_{\lambda} = \sum_{s \in F_{\lambda}} \epsilon(s) e^{-s}.
\end{equation}

\vskip 3mm

\noindent
{\bf Remark.} The subalgebra of $\g$ generated by the
elements $e_{i,1}$, $f_{i,1}$ $(i \in I)$ and $\h$ is the Borcherds
algebra associated with the given Borcherds-Cartan datum $(A, P,
\Pi, P^{\vee}, \Pi^{\vee})$. In this case, all $s_k=1$ and hence we
have
\begin{equation*}
\begin{aligned}
& d_{i}(s) = \# \{k \mid i_k = i \} \ \ \text{for all} \ i \in
I^{\text{im}},\\
& \epsilon(s) = \prod_{i \in I^{\text{im}}} (-1)^{d_i(s)} =
(-1)^{|s|}. \end{aligned}
\end{equation*}

\vskip 3mm

\begin{example} \label{ex:rank1} \hfill

\vskip 2mm

{\rm  Let $i \in I^{\text{im}}$ and $\g_{(i)}$ be the
Borcherds-Bozec algebra associated with $A=(a_{ii})$.

\vskip 3mm

(a) If $a_{ii}=0$, then $\g_{(i)}$ is the Lie algebra generated by
$h_i$, $e_{il}$, $f_{il}$ $(l \ge 1)$ with defining relations
\begin{equation*}
[h_i, e_{il}] = [h_i, f_{il}]=0, \quad [e_{ik}, f_{il}] = k
\delta_{kl} h_i, \quad [e_{ik}, e_{il}] = [f_{ik}, f_{il}]=0
\end{equation*}
for all $k, l \ge 1$. Hence $\g_{(i)}$ is the full Heisenberg
algebra.

\vskip 3mm

If $\langle h_i, \lambda \rangle >0$, then $F_{\lambda}=\{0\}$ and
$S_{\lambda}=1$.

\vskip 3mm

If $\langle h_i, \lambda \rangle =0$, then $F_{0}=\{ l \alpha_{i}
\mid l \ge 0 \}$ and $d_{i}(l \alpha_{i}) =l$ for all $l \ge 0$.
Hence
$$S_{0}=\sum_{l \ge 0} \epsilon(l \alpha_i) \, e^{-l\alpha_i} = \sum_{l
\ge 0} \phi(l) \, e^{-l \alpha_i} = \prod_{k=1}^{\infty} (1- e^{-k
\alpha_{i}}).$$ Note that $U(\g_{(i)}^{-}) \cong \C[f_{il} \mid l
\ge 1]$, which implies
$$\text{ch} U(\g_{(i)}^{-}) = \prod_{k=1}^{\infty} \dfrac{1}{1-e^{-k
\alpha_i}} = S_{0}^{-1}.$$

\vskip 3mm

(b) If $a_{ii} <0$, we have
\begin{equation*}
 [h_i, e_{il}]=l a_{ii} \, e_{il}, \quad
 [h_i, f_{il}]= - l a_{ii} \, f_{il},\quad
 [e_{ik}, f_{il}]=k \delta_{kl} h_i
\end{equation*}
and there are no relations among $e_{il}$'s nor among $f_{il}$'s.
Hence $\g_{(i)}^{+}$ (resp. $\g_{(i)}^{-}$) is the free Lie algebra
on $V^{+}= \bigoplus_{l \ge 1} \C e_{il}$ (resp. $V^{-} =
\bigoplus_{l \ge 1} \C f_{il}$).

\vskip 3mm

If $\langle h_{i}, \lambda \rangle >0$, then $F_{\lambda}=\{0\}$ and
$S_{\lambda}=1$.

\vskip 3mm

If $\langle h_i, \lambda \rangle =0$, we have
\begin{equation*}
 F_{0} = \{ l \alpha_{i} \mid l \ge 0 \}, \quad
 d_i (l \alpha_i) = \begin{cases} 0 & \text{if} \ l=0, \\
1 & \text{if} \ l \ge 1,
\end{cases}\quad
 \epsilon(l \alpha_i) = \begin{cases} 1 & \text{if} \ l=0, \\
-1 & \text{if} \ l \ge 1,
\end{cases}
\end{equation*}
which implies
$$S_{0} = 1 - (e^{-\alpha_i} + e^{-2 \alpha_i} + \cdots ) = 1 -
e^{-\alpha_i} \dfrac{1}{1-e^{-\alpha_i}} = \dfrac{1-2
e^{-\alpha_i}}{1-e^{-\alpha_i}}.$$

\vskip 2mm

On the other hand, $U(\g_{(i)}^{-}) \cong \C \langle f_{il} \mid l \ge 1
\rangle$, the free  associative algebra on $V^{-} = \bigoplus_{l\ge
1} \C f_{il}$, the tensor algebra on $V^{-}$. Since
$$\text{ch} V^{-} = \dfrac{e^{-\alpha_i}}{1-e^{-\alpha_i}},$$
we have
$$\text{ch} U(\g_{(i)}^{-}) = \dfrac{1}{1- \text{ch} V^{-}} =
\dfrac{1}{1-\dfrac{e^{-\alpha_i}}{1-e^{-\alpha_i}}}=
\dfrac{1-e^{-\alpha_i}}{1-2 e^{-\alpha_i}} = S_{0}^{-1},$$ again. }
\qed
\end{example}

\vskip 3mm

Let $\rho$ be a linear functional on $\h$ such that $\langle h_i,
\rho \rangle =1$ for all $i\in I$. For each $w \in W$, we denote by
$l(w)$ the length of $w$ and set $\epsilon(w)= (-1)^{l(w)}$.

\vskip 3mm

\begin{theorem} \label{thm:character} \cite{BSV2016} \
{\rm Let $V=U(\g) v_{\lambda}$ be a highest weight $\g$-module with
highest weight $\lambda \in P^{+}$. If $V$ satisfies the condition
\eqref{eq:hwmodule}, then the character of $V$ is given by the
following formula\,:

\begin{equation} \label{eq:character}
\begin{aligned}
\text{ch} V &= \dfrac{\sum_{w \in W} \epsilon (w) e^{w(\lambda +
\rho)-\rho} w (S_{\lambda})}{\prod_{\alpha \in \Delta_{+}} (1-
e^{-\alpha})^{\dim \g_{\alpha}}} \\
&= \dfrac{\sum_{w \in W} \sum_{s \in F_{\lambda}} \epsilon(w)
\epsilon(s) e^{w(\lambda + \rho -s) - \rho}}{\prod_{\alpha \in
\Delta_{+}} (1-e^{-\alpha})^{\dim \g_{\alpha}}}.
\end{aligned}
\end{equation}
}
\end{theorem}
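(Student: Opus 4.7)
The plan is to adapt the Kac--Borcherds derivation of the character formula, with the extra subtlety that the Borcherds--Bozec algebra has higher-degree generators $e_{i,l}, f_{i,l}$ indexed by $(i,l)\in I^{\infty}$. Using the symmetrizability of $A$, one extends the bilinear form $(\ ,\ )$ on $\h^{*}$ to an invariant symmetric form on $\g$, and chooses dual bases of $\g_{\alpha}$ and $\g_{-\alpha}$ for each $\alpha\in\Delta_{+}$. Assembling these into a generalized Casimir operator $\Omega$ on modules in $\mathcal{O}_{\text{int}}$, a direct check from the defining relations \eqref{eq:bozec-alg} shows that $\Omega$ is central and acts on any highest weight module with highest weight $\mu$ by the scalar $(\mu+2\rho,\mu)$. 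The PBW theorem applied to $\g$ gives the Verma character $\text{ch}\, M(\mu) = e^{\mu}/D$ with $D := \prod_{\alpha\in\Delta_{+}}(1-e^{-\alpha})^{\dim\g_{\alpha}}$.

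Next, a local composition-series argument in $\mathcal{O}_{\text{int}}$, combined with the Casimir constraint, allows one to write
\begin{equation*}
D\cdot\text{ch}\, V = \sum_{\mu} c_{\mu}(V)\, e^{\mu},
\end{equation*}
where the sum runs over $\mu\le\lambda$ with $(\mu+2\rho,\mu)=(\lambda+2\rho,\lambda)$ and $c_{\lambda}(V)=1$. For each real index $i\in I^{\text{re}}$, the hypothesis $f_{i}^{\langle h_{i},\lambda\rangle+1}v_{\lambda}=0$ from \eqref{eq:hwmodule}, together with the local $sl(2,\C)$-theory applied to the $\g_{(i)}$-orbit of each weight space, yields $c_{r_{i}\cdot\mu}(V) = -c_{\mu}(V)$ under the $\rho$-shifted reflection $r_{i}\cdot\mu := r_{i}(\mu+\rho)-\rho$. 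Iterating gives $c_{w\cdot\mu}(V) = \epsilon(w)\,c_{\mu}(V)$ for every $w\in W$, so the problem reduces to identifying $c_{\lambda-s}(V)$ for $s$ supported on imaginary indices in the fundamental chamber.

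For such $s$, the relations $f_{i,l}v_{\lambda}=0$ in \eqref{eq:hwmodule} together with Proposition \ref{prop:imaginary} force the subalgebra $\g_{(i)}$ to act trivially on $v_{\lambda}$ whenever $i\in I^{\text{im}}$ and $\langle h_{i},\lambda\rangle=0$. The orthogonality conditions $(\alpha_{i},\alpha_{j})=0$ and $(\alpha_{i},\lambda)=0$ built into $F_{\lambda}$ make the relevant $\g_{(i)}$'s pairwise commute and ensure that their Casimir contributions line up, so the contribution to $\text{ch}\, V$ factorizes into a product of rank-one contributions. A direct computation of each factor---already carried out for both isotropic and non-isotropic rank-one cases in Example \ref{ex:rank1}---gives $\prod_{i}(\text{ch}\, U(\g_{(i)}^{-}))^{-1}$, which when expanded as a signed sum over compositions $s=\sum s_{k}\alpha_{i_{k}}\in F_{\lambda}$ produces exactly $S_{\lambda}$; the factor $(-1)^{\#(\text{supp}(s)\cap I\setminus I^{\text{iso}})}$ in \eqref{eq:sign} reflects the free-Lie-algebra structure of $\g_{(i)}^{-}$ for non-isotropic imaginary $i$, while $\prod_{i\in I^{\text{iso}}}\phi(d_{i}(s))$ reflects the polynomial (Heisenberg) structure for isotropic $i$. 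Combining this identification with the Weyl anti-invariance of the numerator yields the formula \eqref{eq:character}. The main obstacle is this last step: one must rule out contributions from weights not of the form $w\cdot(\lambda-s)$ with $s\in F_{\lambda}$ (via the Casimir eigenvalue and $W$-invariance of $(\ ,\ )$) \emph{and} verify that the rank-one computations of Example \ref{ex:rank1} propagate cleanly to the general case under the mutual commutativity of the $\g_{(i)}$'s, producing $S_{\lambda}$ rather than some other combinatorial correction.
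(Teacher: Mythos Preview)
The paper itself does not prove this theorem: its proof is a one-line reference to \cite[Theorem~6.1]{BSV2016}, noting only that the argument there applies to any highest weight module satisfying \eqref{eq:hwmodule}, not just to $V(\lambda)$. Your proposal instead sketches an independent Kac--Borcherds style argument, and there is a genuine gap at the Casimir step that is specific to the Borcherds--Bozec setting.

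You assert that the generalized Casimir $\Omega$ is central and acts on a highest weight module of highest weight $\mu$ by $(\mu,\mu+2\rho)$. This fails as soon as there is a non-isotropic imaginary index. Take the rank-one algebra $\g_{(i)}$ with $a_{ii}<0$ and the Verma module $M(0)$. For every $l\ge1$ the vector $f_{il}v_{0}\neq0$ is maximal, since $e_{jm}f_{il}v_{0}=[e_{jm},f_{il}]v_{0}=m\,\delta_{ij}\delta_{ml}\,h_{i}v_{0}=0$; hence $V(-l\alpha_{i})$ occurs as a subquotient of $M(0)$. A central Casimir with the stated eigenvalue would force $(-l\alpha_{i},-l\alpha_{i}+2\rho)=0$, i.e.\ $l^{2}s_{i}a_{ii}-2ls_{i}=0$, for every $l\ge1$, which is impossible when $a_{ii}<0$. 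Equivalently, the vanishing of $[\Omega,e_{il}]$ in the usual telescoping computation would require $2(\rho,l\alpha_{i})=(l\alpha_{i},l\alpha_{i})$ for each generator $e_{il}$, and no single $\rho$ can satisfy $2(\rho,\alpha_{i})=l(\alpha_{i},\alpha_{i})$ for all $l\ge1$ when $(\alpha_{i},\alpha_{i})\neq0$. Consequently your Casimir constraint cannot cut the support of $D\cdot\text{ch}\,V$ down to the $W$-orbit of $\{\lambda-s:s\in F_{\lambda}\}$, and the subsequent reduction to the rank-one computations of Example~\ref{ex:rank1} is left without justification. A correct proof needs a different mechanism to isolate the fundamental-chamber contributions; the argument in \cite{BSV2016} provides one and should be consulted directly.
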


\vskip 2mm

\begin{proof}
The argument given in \cite[Theorem 6.1]{BSV2016} works for any
highest weight $\g$-module with highest weight $\lambda \in P^{+}$
satisfying the condition \eqref{eq:hwmodule}.
\end{proof}

\vskip 3mm

In particular, when $\lambda =0$, we obtain the {\it denominator
identity}

\begin{equation} \label{eq:denom}
\begin{aligned}
\prod_{\alpha \in \Delta_{+}} & (1-e^{-\alpha})^{\dim \g_{\alpha}} =
\sum_{w\in W} \epsilon(w) e^{w \rho - \rho} w(S_{0}) \\
& = \sum_{w \in W} \sum_{s \in F_{0}} \epsilon(w) \epsilon(s)
e^{w(\rho -s) - \rho}.
\end{aligned}
\end{equation}

\vskip 3mm

In Borcherds algebra case, we have
\begin{equation*}
\text{ch} V = \dfrac{\sum_{w \in W} \sum_{s \in F_{\lambda}}
(-1)^{l(w)+|s|}  e^{w(\lambda + \rho -s) - \rho}}{\prod_{\alpha \in
\Delta_{+}} (1-e^{-\alpha})^{\dim \g_{\alpha}}},
\end{equation*}
which yields
\begin{equation*}
\prod_{\alpha \in \Delta_{+}} (1-e^{-\alpha})^{\dim \g_{\alpha}}
=\sum_{w \in W} \sum_{s \in F_{0}} (-1)^{l(w)+|s|} e^{w(\rho -s) -
\rho}.
\end{equation*}

\vskip 3mm

Now we will show that every $\g$-module in the category ${\mathcal
O}_{\text{int}}$ is completely reducible.

\vskip 3mm

\begin{proposition} \label{prop:Oint} \hfill

\vskip 2mm

{\rm
\begin{enumerate}
\item[(a)] Every highest weight $\g$-module with highest weight $\lambda
\in P^{+}$ satisfying \eqref{eq:hwmodule} is isomorphic to
$V(\lambda)$.

\vskip 2mm

\item[(b)] If $V(\lambda)$ belongs to the category ${\mathcal
O}_{\text{int}}$, then $\lambda \in P^{+}$.

\vskip 2mm

\item[(c)] Every simple object in the category ${\mathcal O}_{\text{int}}$
is isomorphic to $V(\lambda)$ for some $\lambda \in P^{+}$.

\vskip 2mm

\item[(d)] The category ${\mathcal O}_{\text{int}}$ is semisimple.
\end{enumerate}}
\end{proposition}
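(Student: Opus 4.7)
The plan is to address (a)--(d) in order, using the character formula of Theorem \ref{thm:character} and standard $\mathfrak{sl}_2$-theory as the main tools. For (a), Theorem \ref{thm:character} applies equally to $V$ and to $V(\lambda)$ (since $V(\lambda)$ satisfies \eqref{eq:hwmodule} by Proposition \ref{prop:hwmodule}), yielding $\text{ch}\, V = \text{ch}\, V(\lambda)$. Because $V$ is a nonzero highest weight module with highest weight $\lambda$, it is a quotient of $M(\lambda)$, and the kernel of $M(\lambda) \twoheadrightarrow V$ lies in the unique maximal submodule $R(\lambda)$. This produces a surjection $V \twoheadrightarrow V(\lambda)$ which, by the character equality, must be an isomorphism.

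For (b) and (c), the main tools are the representation theory of $\mathfrak{sl}_2$ (for real indices) and the defining axioms of $\mathcal{O}_{\text{int}}$. In (b), for $i \in I^{\text{re}}$ the triple $\{e_i, f_i, h_i\}$ spans a copy of $\mathfrak{sl}_2$, and the cyclic submodule $U(\mathfrak{sl}_2)\, v_\lambda$ is finite-dimensional by local nilpotency of $f_i$, which forces $\langle h_i, \lambda\rangle \in \Z_{\ge 0}$; for $i \in I^{\text{im}}$, axiom (iv) in Definition \ref{def:Oint} applied to $\mu = \lambda$ gives $\langle h_i, \lambda\rangle \ge 0$ directly. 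For (c), let $M$ be simple in $\mathcal{O}_{\text{int}}$. By axiom (ii), $\text{wt}(M)$ admits a maximal element $\lambda$, and any $0 \neq v \in M_\lambda$ is a highest weight vector because $e_{il} v \in M_{\lambda + l\alpha_i} = 0$ by maximality; simplicity gives $M = U(\g)\, v$. Applying the argument of (b) to $v$ yields $\lambda \in P^+$; the $\mathfrak{sl}_2$-computation gives $f_i^{\langle h_i, \lambda\rangle + 1} v = 0$ for $i \in I^{\text{re}}$, and axiom (v) gives $f_{il}\, v = 0$ for $(i,l) \in I^\infty$ with $\langle h_i, \lambda\rangle = 0$. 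Thus $v$ satisfies \eqref{eq:hwmodule}, and (a) yields $M \cong V(\lambda)$.

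Part (d) is the main obstacle. The plan is to adapt Kac's Casimir-operator argument from the symmetrizable Kac-Moody setting to the Borcherds-Bozec case. One constructs a generalized Casimir element $\Omega$ that acts on every object of $\mathcal{O}_{\text{int}}$ (convergence on each weight space is secured by axioms (i) and (ii)), acts on $V(\lambda)$ by the scalar $(\lambda, \lambda + 2\rho)$, and commutes with the $\g$-action. Decomposing $M \in \mathcal{O}_{\text{int}}$ into generalized $\Omega$-eigenspaces reduces the problem to modules in which all simple subquotients share a single Casimir eigenvalue. In that setting, the socle $N \subseteq M$, which by (c) is a direct sum of irreducibles $V(\lambda)$, must equal $M$: otherwise the maximal-weight argument of (c) applied to $M/N$ produces a highest weight vector whose weight-matching and Casimir eigenvalue allow one to lift it to a genuine highest weight vector of $M$, yielding a simple submodule not contained in $N$ and contradicting maximality. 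A Zorn's-lemma extraction of a maximal direct family of simple submodules then completes the decomposition. The principal subtlety is constructing a Casimir operator appropriate to the Borcherds-Bozec setting, where one must simultaneously accommodate the infinite index set, the imaginary simple roots, and the higher-degree generators $e_{il}, f_{il}$ for $l \ge 1$, and verify convergence on $\mathcal{O}_{\text{int}}$.
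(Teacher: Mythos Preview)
Your arguments for (a)--(c) are essentially those of the paper, with somewhat more detail spelled out; the paper simply says (a) ``follows from the character formula,'' proves (b) exactly as you do, and for (c) observes that any simple object in $\mathcal{O}_{\text{int}}$ is highest weight (via a maximal weight vector) and then invokes (b).

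For (d) you take a genuinely different route. The paper does not build a Casimir operator at all; it simply observes that, once Theorem~\ref{thm:character} is in hand, the complete-reducibility argument of \cite[Theorem 3.5.4]{HK02} or \cite[Theorem 3.7]{JKK05} carries over verbatim. Those arguments work directly from the character formula (and part (a)) to show that every short exact sequence $0 \to V(\mu) \to M \to V(\lambda) \to 0$ in $\mathcal{O}_{\text{int}}$ splits, without ever constructing $\Omega$. Your Casimir approach is the classical Kac-style alternative and is perfectly valid in principle, but as you yourself flag, it obliges you to produce a nondegenerate invariant form on $\g$ pairing $\g_{\alpha}$ with $\g_{-\alpha}$ in the Borcherds--Bozec setting, accommodating the higher-degree generators $e_{il}, f_{il}$ and the relation $[e_{ik}, f_{jl}] = k\,\delta_{ij}\delta_{kl}\, h_i$; none of this machinery is set up in the paper. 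The paper's route is therefore more economical here---it leverages Theorem~\ref{thm:character}, which is already established, and outsources the remaining bookkeeping to the cited references---whereas your route is more self-contained but demands a nontrivial preliminary construction that you have identified but not carried out.
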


\begin{proof} \ (a) follows from the character formula.

\vskip 2mm

\begin{enumerate}
\item[(b)] If $i \in I^{\text{re}}$, then by the standard $sl(2,
\C)$-theory, $\langle h_i, \lambda \rangle \ge 0$. If $i \in
I^{\text{im}}$, by the definition of ${\mathcal O}_{\text{int}}$, we
have $\langle h_i, \lambda \rangle \ge 0$.

\vskip 2mm

\item[(c)] Every simple object in ${\mathcal O}_{\text{int}}$is a highest
weight $\g$-module because any maximal weight vector would generate
a highest weight submodule. Then the statement (b) implies (c).

\vskip 2mm

\item[(d)] Thanks to Theorem \ref{thm:character}, the same argument in
\cite[Theorem 3.5.4]{HK02} or \cite[Theorem 3.7]{JKK05} would prove
our claim.
\end{enumerate}
\end{proof}

\vskip 3mm

\begin{example} \label{ex:char-rank2}

{\rm

\vskip 2mm

Let $i \in I^{\text{im}}$ and $\g_{(i)}$ be the Borcherds-Bozec
algebra associated with $A=(a_{ii})$. If $\langle h_i, \lambda
\rangle =0$, then the case is trivial; i.e., $V(\lambda)=\C
v_{\lambda}$ and $\text{ch} V(\lambda) = e^{\lambda}$. Thus we
assume that $\langle h_i, \lambda \rangle >0$.

\vskip 3mm

(a) If $a_{ii}=0$, then $W=\{1\}$ and $S_{\lambda}=1$. Hence
\begin{equation*}
\text{ch} V(\lambda)=\dfrac{e^{\lambda}}{\prod_{k=1}^{\infty}
(1-e^{-k\alpha_i})^{\dim \g_{k\alpha_i}}}.
\end{equation*}

\vskip 2mm

By the denominator identity and Example \ref{ex:rank1} (a), we have

\begin{equation*}
\prod_{k=1}^{\infty} (1-e^{-k\alpha_i})^{\dim \g_{k\alpha_i}} =
S_{0} = \prod_{k=1}^{\infty} (1-e^{-k\alpha_i}).
\end{equation*}

\vskip 3mm

It follows that $\dim \g_{k \alpha_i} =1$ for all $k \ge 1$ and
$$\text{ch} V(\lambda) = \dfrac{e^{\lambda}}{\prod_{k=1}^{\infty}(1-e^{-k
\alpha_i})}.$$

\vskip 3mm

(b) If $a_{ii} <0$, then again $W=\{1\}$, $S_{\lambda}=1$ and the
denominator identity combined with Example \ref{ex:rank1}(b) gives

\begin{equation*}
\text{ch} V(\lambda) = \dfrac{e^{\lambda}}{\prod_{k=1}^{\infty}
(1-e^{-k \alpha_i})^{\dim \g_{k \alpha_{i}}}} = e^{\lambda}
S_{0}^{-1} = \dfrac{e^{\lambda} (1-e^{-\alpha_i})}{1-2
e^{-\alpha_i}}.
\end{equation*}

\vskip 3mm

In each case, we have $\text{ch} V(\lambda) = e^{\lambda} \text{ch}
U(\g_{(i)}^{-})$. Thus when $\langle h_i, \lambda \rangle >0$, the
irreducible $\g_{(i)}$-module $V(\lambda)$ is isomorphic to the
Verma module $M(\lambda)$. Therefore, for each $i \in
I^{\text{im}}$, every $\g_{(i)}$-module $M$ in the category
${\mathcal O}_{\text{int}}$ is isomorphic to a direct sum of
1-dimensional trivial modules and (infinite-dimensional) Verma
modules. } \qed
\end{example}

\vskip 7mm

\section{Root multiplicity formula}

\vskip 3mm

Recall the denominator identity \eqref{eq:denom}:

\begin{equation*}
\prod_{\alpha \in \Delta_{+}} (1-e^{-\alpha})^{\dim \g_{\alpha}}=
\sum_{w \in W} \sum_{s \in F_{0}} \epsilon(w) \epsilon(s) e^{w(\rho
-s) - \rho},
\end{equation*}
where $F_{0}$ is the set of elements of the form $s= \sum_{k=1}^r
s_k \alpha_{i_k}$ $(r \ge 0)$ such that
\begin{itemize}
\item[(i)] $i_{k} \in I^{\text{im}}$, $s_{k} \in \Z_{>0}$ for all $1 \le
k \le r$,

\item[(ii)] $(\alpha_{i_k}, \alpha_{i_l}) =0$ for all $1 \le k,l \le
r $.
\end{itemize}

\vskip 3mm

Take a finite subset $J \subset I^{\text{re}}$ and set
$$\Delta^{J} = \Delta \cap (\sum_{j \in J} \Z \alpha_j),
\quad \Delta_{\pm}^{J}= \Delta_{\pm} \cap \Delta^{J}, \quad
\Delta_{\pm}(J) = \Delta_{\pm} \setminus \Delta_{\pm}^{J},$$
$$W^{J}= \langle r_{j} \mid j \in J \rangle, \quad W(J)=\{w \in W
\mid w \Delta_{-} \cap \Delta_{+} \subset \Delta_{+}(J) \}.$$

Note that $W(J)$ is a set of right coset representatives of $W^{J}$
in $W$. Hence when $w=w' r_j$ with $l(w) = l(w')+1$, $w \in W(J)$ if
and only if $w' \in W(J)$ and $w'(\alpha_j) \in \Delta_{+}(J)$. In this way, one can construct $W(J)$ inductively.

\vskip 3mm

Let
$$\g_{0}^{(J)}:=\h \oplus (\sum_{\alpha \in \Delta^{J}}
\g_{\alpha}), \quad \g_{\pm}^{(J)} = \sum_{\alpha \in
\Delta_{\pm}^{(J)}} \g_{\alpha}.$$ Then we have a {\it twisted
triangular decomposition}
$$\g = \g_{-}^{(J)} \oplus \g_{0}^{(J)} \oplus \g_{+}^{(J)}.$$

\vskip 3mm

Let $P_{J}^{+} = \{ \lambda \in P \mid \langle h_j, \lambda \rangle
\ge 0 \ \, \text{for all} \ j \in J \}$ and let $V_{J}(\lambda)$ be
the irreducible highest weight $\g_{0}^{(J)}$-module with highest
weight $\lambda \in P_{J}^{+}$. Then by the Weyl-Kac character
formula \cite{Kac74, Kac90}, we have
$$\text{ch} V_{J}(\lambda) = \dfrac{\sum_{w \in W^{J}} \epsilon(w)
e^{w(\lambda + \rho) - \rho}}{\prod_{\alpha \in \Delta_{+}^{J}}
(1-e^{-\alpha})^{\dim \g_{\alpha}}}.$$

Using this and the denominator identity \eqref{eq:denom}, we obtain
the following {\it twisted denominator identity} for Borcherds-Bozec
algebras, which will be the corner-stone for our root multiplicity
formula.

\vskip 3mm

\begin{proposition} \label{prop:twisted}

\vskip 2mm

{\rm
\begin{equation} \label{eq:twisted}
\prod_{\alpha \in \Delta^{+}(J)} (1-e^{-\alpha})^{\dim \g_{\alpha}}
=\sum_{w \in W(J)} \sum_{s \in F_{0}} \epsilon(w) \epsilon(s)
\text{ch} V_{J}(w(\rho -s) -\rho).
\end{equation}
}
\end{proposition}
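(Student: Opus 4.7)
The plan is to derive the twisted denominator identity \eqref{eq:twisted} from the ordinary denominator identity \eqref{eq:denom} by splitting the sum over $W$ along the coset decomposition $W = W^{J} \cdot W(J)$ and then collapsing the inner $W^{J}$-sum via the Weyl--Kac character formula for $\g_{0}^{(J)}$.

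The formal manipulation is routine. Every $w \in W$ factors uniquely as $w = u v$ with $u \in W^{J}$, $v \in W(J)$, and $l(w) = l(u) + l(v)$, so $\epsilon(w) = \epsilon(u)\epsilon(v)$. Writing $\lambda_{v,s} := v(\rho - s) - \rho$, the right-hand side of \eqref{eq:denom} becomes
$$\sum_{v \in W(J)} \sum_{s \in F_{0}} \epsilon(v)\epsilon(s) \sum_{u \in W^{J}} \epsilon(u)\, e^{u(\lambda_{v,s} + \rho) - \rho}.$$
Provided $\lambda_{v,s} \in P_{J}^{+}$, the Weyl--Kac formula for $\g_{0}^{(J)}$ identifies the inner sum with $\text{ch}\, V_{J}(\lambda_{v,s}) \cdot \prod_{\alpha \in \Delta_{+}^{J}}(1 - e^{-\alpha})^{\dim \g_{\alpha}}$. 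The left-hand side of \eqref{eq:denom} factors as $\prod_{\alpha \in \Delta_{+}^{J}}(1-e^{-\alpha})^{\dim \g_{\alpha}} \cdot \prod_{\alpha \in \Delta_{+}(J)}(1-e^{-\alpha})^{\dim \g_{\alpha}}$, so cancelling the common factor from both sides produces \eqref{eq:twisted}.

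The main obstacle is verifying the dominance $\lambda_{v,s} \in P_{J}^{+}$, that is, $(v^{-1}\alpha_{j}, \rho - s) \geq s_{j}$ for all $j \in J$, $v \in W(J)$, $s \in F_{0}$ (using $W$-invariance of $(\ ,\ )$ and $(\alpha_{j}, \alpha_{j}) = 2s_{j}$). The characterization $v\Delta_{-}\cap\Delta_{+}\subset \Delta_{+}(J)$ of $W(J)$ forces $\beta := v^{-1}\alpha_{j}$ to be a positive real root with $(\beta,\beta) = 2s_{j}$. Next, the definition of $F_{0}$ (from \eqref{eq:slambda} with $\lambda = 0$) imposes $(\alpha_{i_{k}}, \alpha_{i_{k}}) = 0$, i.e., $i_{k} \in I^{\text{iso}}$; expanding $\beta = \sum_{i} n_{i}\alpha_{i}$ with $n_{i} \geq 0$ and using $(\alpha_{i}, \alpha_{i_{k}}) \leq 0$ for every $i$ gives $(\beta, s) \leq 0$, hence $(\beta, \rho - s) \geq (\beta, \rho)$. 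It thus suffices to prove $(\beta, \rho) \geq (\beta, \beta)/2 = s_{j}$ for every positive real root $\beta$, which I would argue by induction on $|\beta|$. The base case $\beta = \alpha_{k}$ is an equality, since $(\alpha_{k}, \alpha_{k}) = 2s_{j}$ forces $s_{k} = s_{j}$ and $(\alpha_{k}, \rho) = s_{k}$. For the inductive step, any non-simple positive real root $\beta$ has $(\beta, \beta) > 0$, so some $(\alpha_{k}, \beta) > 0$; such $k$ must lie in $I^{\text{re}}$, because for $i \in I^{\text{im}}$ all pairings $(\alpha_{i}, \alpha_{i'})$ are $\leq 0$. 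Then $r_{k}\beta$ is a positive real root of strictly smaller height, and $r_{k}\rho = \rho - \alpha_{k}$ yields $(\beta, \rho) = (r_{k}\beta, \rho) + (\beta, \alpha_{k}) > (r_{k}\beta, \rho) \geq s_{j}$ by induction, completing the argument.
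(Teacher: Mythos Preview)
Your core approach matches the paper's exactly: factor $W = W^{J}\,W(J)$, collapse the inner $W^{J}$-sum via the Weyl--Kac formula for $\g_{0}^{(J)}$, and cancel the factor $\prod_{\alpha \in \Delta_{+}^{J}}(1-e^{-\alpha})^{\dim \g_{\alpha}}$. The paper's own proof stops there and does not verify the dominance $w(\rho-s)-\rho \in P_{J}^{+}$ at all; your added verification is a genuine (and useful) supplement, since the subsequent definition of $V^{(J)}$ as a virtual sum of $\g_{0}^{(J)}$-modules tacitly relies on it.

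There is, however, a misreading in that supplement. You assert that the definition of $F_{0}$ forces $(\alpha_{i_{k}},\alpha_{i_{k}})=0$, i.e.\ $i_{k}\in I^{\text{iso}}$. Condition (ii) in the definition of $F_{\lambda}$ is meant for distinct indices; compare Example~\ref{ex:mult2a}, where $a_{11}=-2$ yet $F_{0}=\{k\alpha_{1}\mid k\ge 0\}$. Fortunately your argument survives the correction: for $i_{k}\in I^{\text{im}}$ one still has $(\alpha_{i},\alpha_{i_{k}})=s_{i}a_{i,i_{k}}\le 0$ for every $i$ (including $i=i_{k}$, since $a_{i_{k}i_{k}}\le 0$), so $(\beta,s)\le 0$ and the rest of your induction on $|\beta|$ goes through unchanged.
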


\begin{proof} \
The right
-hand side is equal to
\begin{equation*}
\begin{aligned}
& \sum_{w \in W(J)} \sum_{s \in F_{0}} \epsilon(w) \epsilon(s)
\dfrac{\sum_{w' \in W^{J}} \epsilon(w') e^{w'(w(\rho-s)-\rho + \rho)
-\rho}} {\prod_{\alpha \in \Delta_{+}^{J}} (1-e^{-\alpha})^{\dim
\g_{\alpha}}} \\
& = \dfrac{\sum_{w \in W(J)} \sum_{s \in F_{0}} \sum_{w' \in W^{J}}
\epsilon(w) \epsilon(w') \epsilon(s) e^{w' w (\rho - s) -
\rho}}{\prod_{\alpha \in \Delta_{+}^{J}} (1-e^{-\alpha})^{\dim
\g_{\alpha}}} \\
& = \dfrac{\sum_{w \in W} \sum_{s \in F_{0}} \epsilon(w) \epsilon(s)
e^{w(\rho-s)-\rho}}{\prod_{\alpha \in \Delta_{+}^{J}}
(1-e^{-\alpha})^{\dim
\g_{\alpha}}} \\
&= \dfrac{\prod_{\alpha \in \Delta_{+}} (1-e^{-\alpha})^{\dim
\g_{\alpha}}}{\prod_{\alpha \in \Delta_{+}^{J}}
(1-e^{-\alpha})^{\dim \g_{\alpha}}} =\prod_{\alpha \in
\Delta_{+}(J)} (1-e^{-\alpha})^{\dim \g_{\alpha}}
\end{aligned}
\end{equation*}
as desired.
\end{proof}

\vskip 3mm

For $s= \sum s_k \alpha_{i_k} \in F_{0}$, set
\begin{equation*}
s^{+}:=\sum_{i_k \notin I^{\text{iso}}} s_k \alpha_{i_k}, \quad s^0
:=\sum_{i_k \in I^{\text{iso}}} s_k \alpha_{i_k}, \quad d(s):=
\sum_{i \in I^{\text{im}}} d_i(s).
\end{equation*}
Then we have
$$s = s^{+} + s^{0}, \quad d(s) = d(s^{+}) + d(s^{0}), \quad
\epsilon(s) = \epsilon(s^{+}) \, \epsilon(s^{0}).$$
Here, we
understand $d(0)=0$, $\epsilon(0)=1$.

\vskip 3mm

Define
\begin{equation} \label{eq:spaceV}
V^{(J)} = \bigoplus_{\substack{w \in W(J)\\ s \in F_{0}\\
l(w)+d(s)>0}} (-1)^{l(w) + d(s^{+})+1} \, \epsilon(s^{0}) \,
V_{J}(w(\rho-s)-\rho),
\end{equation}
a virtual direct sum of $\g_{0}^{(J)}$-modules. Then the twisted
denominator identity can be written as

\begin{equation} \label{eq:twisted2}
\prod_{\alpha \in \Delta_{+}^{(J)}} (1-e^{-\alpha})^{\dim
\g_{\alpha}} = 1 - \text{ch} V^{(J)}.
\end{equation}

\vskip 3mm

For each $\mu \in P$, we denote the {\it virtual dimension} of
$V^{(J)}_{\mu}$ by
\begin{equation*}
\begin{aligned}
& d_{\mu} := \dim V^{(J)}_{\mu} \\
& = \sum_{\substack{w \in W(J) \\ s \in F_{0} \\ l(w) + d(s) >0}}
(-1)^{l(w) + d(s^+) +1} \epsilon(s^0)\, \dim V_{J}(w(\rho -s) -
\rho)_{\mu}.
\end{aligned}
\end{equation*}

\vskip 2mm

Set $\text{wt}(V^{(J)}) = \{ \mu \in P \mid d_{\mu} \neq 0 \}$ and
give an enumeration $(\mu_1, \mu_2, \ldots)$ of $\text{wt}
(V^{(J)})$. For each $\mu \in P$, let ${\mathscr P}^{(J)}(\mu)$ be
the set of partitions of $\mu$ into a sum of $\mu_{i}$'s\,:

\begin{equation} \label{eq:partition}
{\mathscr P}^{(J)}(\mu) = \{ s =(s_i)_{i \ge 1} \mid s_i \in \Z_{\ge
0}, \ \sum s_i \mu_i = \mu \}.
\end{equation}

\vskip 2mm

For a partition $s=(s_{i}) \in {\mathscr P}^{(J)}(\mu)$, we write
$|s|:=\sum s_i$, $s! = \prod s_{i}!$ and define the {\it Witt
partition function of $\mu$} by

\begin{equation} \label{eq:witt}
W^{(J)}(\mu) = \sum_{s \in {\mathscr P}^{(J)}(\mu)}
\dfrac{(|s|-1)!}{s!} \prod (d_{\mu_i})^{s_i}.
\end{equation}

\vskip 3mm

We now derive our root multiplicity formula for Borcherds-Bozec
algebras (cf. \cite{Kang94a}).

\vskip 2mm

\begin{theorem} \label{thm:mult} \
{\rm For any root $\alpha \in \Delta_{+}^{(J)}$, we have
\begin{equation} \label{eq:mult}
\dim \g_{\alpha} = \sum_{d | \alpha} \frac{1}{d}\, \mu(d)\,
W^{(J)}(\alpha \big/ d),
\end{equation}
where $\mu$ denotes the M\"obius function.}
\end{theorem}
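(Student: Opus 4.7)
The strategy is to take the formal logarithm of the twisted denominator identity in the form \eqref{eq:twisted2}, expand both sides using the power series for $\log(1-x)$, compare coefficients of $e^{-\beta}$, and conclude via M\"obius inversion. This is the standard Witt-type argument adapted from \cite{Kang94a} to the Borcherds-Bozec setting.

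Starting from $\prod_{\alpha \in \Delta_{+}(J)} (1-e^{-\alpha})^{\dim \g_{\alpha}} = 1 - \text{ch}\, V^{(J)}$, take $\log$ of both sides and apply $\log(1-x) = -\sum_{n \ge 1} x^n/n$. On the left, substituting $\beta = n\alpha$, the coefficient of $e^{-\beta}$ (for $\beta \in \mathsf{Q}_{+}$) becomes $-\sum_{d \mid \beta} \dim \g_{\beta/d}/d$, where $\dim \g_{\beta/d} = 0$ whenever $\beta/d \notin \Delta_{+}(J)$. On the right, writing $\text{ch}\, V^{(J)} = \sum_{i} d_{\mu_i} e^{\mu_i}$ and expanding $(\text{ch}\, V^{(J)})^n$ by the multinomial theorem, then dividing by $n$ and summing over $n \ge 1$, the coefficient of $e^{-\beta}$ equals
$$-\sum_{s \in \mathscr{P}^{(J)}(\beta)} \frac{(|s|-1)!}{s!} \prod d_{\mu_i}^{s_i} = -W^{(J)}(\beta),$$
using the identity $n!/n = (|s|-1)!$ when $|s|=n$. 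Matching the two coefficients yields
$$\sum_{d \mid \beta} \frac{\dim \g_{\beta/d}}{d} = W^{(J)}(\beta).$$

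Finally, writing $\alpha = n\gamma$ for a primitive element $\gamma \in \mathsf{Q}_{+}$, set $F(n) := n \dim \g_{n\gamma}$ and $G(n) := n W^{(J)}(n\gamma)$. The displayed identity rewrites as $G(n) = \sum_{d \mid n} F(d)$, so classical M\"obius inversion yields $F(n) = \sum_{d \mid n} \mu(d)\, G(n/d)$; dividing by $n$ gives the desired closed-form formula $\dim \g_{\alpha} = \sum_{d \mid \alpha} (\mu(d)/d)\, W^{(J)}(\alpha/d)$. The substantive work has already been done in establishing Proposition \ref{prop:twisted}, so the only real care-points here are formal: one must verify that each coefficient of $e^{-\beta}$ is a finite sum, which follows from the fact that for each fixed weight only finitely many $(w,s) \in W(J) \times F_{0}$ contribute to that weight of $V^{(J)}$, and one must reconcile the sign/indexing convention in the enumeration of $\text{wt}(V^{(J)})$ with the domain of $\mathscr{P}^{(J)}(\beta)$ so that partitioning $\beta$ into the weights $\{\mu_i\}$ is well-defined as written.
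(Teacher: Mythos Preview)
Your proposal is correct and follows essentially the same approach as the paper's own proof: take the formal logarithm of the twisted denominator identity \eqref{eq:twisted2}, expand both sides with the series $\log(1-x)=-\sum_{n\ge 1}x^n/n$ and the multinomial theorem, equate coefficients to obtain $W^{(J)}(\beta)=\sum_{d\mid\beta}\tfrac{1}{d}\dim\g_{\beta/d}$, and then apply M\"obius inversion. The only cosmetic difference is that you spell out the inversion step via the auxiliary functions $F(n)$ and $G(n)$, whereas the paper simply invokes M\"obius inversion directly; note also that the paper writes $\text{ch}\,V^{(J)}=\sum d_{\mu_i}e^{-\mu_i}$ with the $\mu_i$ taken in $\mathsf{Q}_+$, so be sure your sign convention for the enumeration of $\text{wt}(V^{(J)})$ matches this when you write out the details.
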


\vskip 2mm

\begin{proof} \ Note that $\text{wt}(V^{(J)}) \subset -{\mathsf
Q}_{+}$. We write
$$\text{ch} (V^{(J)}) = \sum_{\mu \in \text{wt}(V^{(J)})} d_{\mu}
e^{-\mu} = \sum_{i=1}^{\infty} d_{\mu_i} e^{-\mu_i}.$$ Then the
twisted denominator identity yields
$$\prod_{\alpha \in \Delta_{+}(J)} (1- e^{-\alpha})^{\text{dim}
\g_{\alpha}} = 1 - \sum_{i=1}^{\infty} d_{\mu_i} e^{-\mu_i}.$$
Taking the logarithm of the left-hand side, we obtain
$$
\begin{aligned}
& \text{log}\left(\prod_{\alpha \in \Delta_{+}(J)} (1-
e^{-\alpha})^{\text{dim} \g_{\alpha}}\right) = \sum_{\alpha \in
\Delta_{+}(J)} \text{dim} \g_{\alpha} \, \text{log} (1-e^{-\alpha})
\\
&=\sum_{\alpha \in \Delta_{+}(J)} \text{dim} \g_{\alpha} \,
\sum_{k=1}^{\infty} \left(- \dfrac{1}{k}\, e^{-k \alpha} \right)
=-\sum_{\alpha \in \Delta_{+}(J)}
\sum_{k=1}^{\infty}\left(\dfrac{1}{k}\, \text{dim} \g_{\alpha}
\right) e^{-k \alpha}.
\end{aligned}
$$

On the other hand, taking the logarithm of the right-hand side, we
get
$$
\begin{aligned}
& \text{log}  \left(1-  \sum_{i=1}^{\infty} d_{\mu_i}
e^{-\mu_i}\right) = - \sum_{n=1}^{\infty} \dfrac{1}{n}
\left(\sum_{i=1}^{\infty} d_{\mu_i} e^{-\mu_i}\right)^n \\
& = - \sum_{n=1}^{\infty} \dfrac{1}{n} \left(\sum_{\substack{s=(s_i)
\\ \sum s_i =n}} \dfrac{n!}{\prod s_i !} \prod (d_{\mu_i} e^{-\mu_i})^{s_i}\right) \\
& = - \sum_{\beta} \left(\sum_{\substack{s=(s_i) \\ \sum s_i \mu_i =
\beta}} \dfrac{(|s|-1)!}{s!} \prod d_{\mu_i}^{s_i} \right)
e^{-\beta}=-\sum_{\beta} W^{(J)}(\beta) e^{-\beta}.
\end{aligned}
$$
Thus we have
$$\sum_{\alpha \in \Delta_{+}(J)} \sum_{k=1}^{\infty}
\left(\dfrac{1}{k}\, \text{dim} \g_{\alpha} \right) e^{-k \alpha} =
\sum_{\beta} W^{(J)}(\beta) e^{-\beta},$$ which yields
$$W^{(J)}(\beta) = \sum_{\beta = k \alpha} \dfrac{1}{k} \, \text{dim}
\g_{\alpha} = \sum_{d | \beta} \dfrac{1}{d}\, \text{dim} \g_{\beta /
d}.$$ Hence by M\"obius inversion, we obtain
$$\text{dim} \g_{\alpha} = \sum_{d | \alpha} \dfrac{1}{d}\, \mu(d)
\, W^{(J)}(\alpha / d).$$
\end{proof}

\vskip 3mm

\noindent {\bf Remark.} A natural interpretation of the twisted
denominator identity would be the Euler-Poincar\'e principle\,:
\begin{equation*}
\sum_{k=0}^{\infty} (-1)^k \text{ch} (\Lambda^k(\g_{-}^{(J)})
\otimes V(\lambda)) = \sum_{k=0}^{\infty} (-1)^k \text{ch} H_{k}
(\g_{-}^{(J)}, V(\lambda)),
\end{equation*}
where $V(\lambda)$ denotes the irreducible highest weight
$\g$-module with highest weight $\lambda \in P^{+}$. We expect the
following {\it Kostant-type homology formula} holds\,:
\begin{equation*}
H_{k}(\g_{-}^{(J)}, V(\lambda)) \cong \bigoplus_{\substack{w \in
W(J)\\ s \in F_{0} \\ l(w) + d(s) =k}} \epsilon(s^0)\,
V_{J}(w(\lambda + \rho-s)-\rho).
\end{equation*}

\vskip 3mm

We now consider some concrete applications of the root multiplicity
formula \eqref{eq:mult}.

\vskip 3mm

\begin{example} \label{ex:mult1} \hfill

\vskip 2mm

{\rm  Let $i \in I^{\text{im}}$ and let $\g_{(i)}$ be the
Borcherds-Bozec algebra associated with $A=(a_{ii})$. In this case,
we have
$$\Delta_{+}=\{k \alpha_i \mid k \ge 1 \}, \quad F_{0} = \{ s= k
\alpha_i \mid k \ge 0\}, \quad J=\emptyset, \quad W(J)=\{1\}.$$

\vskip 2mm

(a) If $a_{ii}=0$, for $s=k \alpha_i$ $(k \ge 0)$, we have
$$s = s^0, \quad d(s)=k, \quad \epsilon(s)=\phi(k),$$
which implies
$$V:= V^{(J)} = \bigoplus_{k=1}^{\infty}(-\phi(k)) \C u_{k},$$
where $u_k$ is a vector with weight $-k \alpha_i$ $(k \ge 1)$.

\vskip 3mm

Let $q=e^{-\alpha_i}$ and $\g(n)=\g_{n \alpha_i}$. Then the
denominator identity yields
$$\prod_{n=1}^{\infty} (1-q^n)^{\dim \g(n)} = 1 -
\sum_{k=1}^{\infty} (-\phi(k)) q^k = \prod_{n=1}^{\infty} (1-q^n),$$
from which we conclude
$$\dim \g(n) = 1 \quad \text{for all} \ \ n\ge 1$$
as we have seen in Example \ref{ex:char-rank2} (a).

\vskip 3mm

Let us compute $\dim \g(n)$ using the multiplicity formula
\eqref{eq:mult}, which will be much more complicated than the direct
calculation in this case. But the multiplicity formula
\eqref{eq:mult} can be applied to all Borcherds-Bozec algebras.

\vskip 3mm

We identify $\text{wt} (V)$ with $\Z_{>0}=\{1,2, \ldots \}$. Note
that $\dim V_{k}=-\phi(k)$ for all $k \ge 1$. For each $n \ge 1$,
set
$${\mathscr P}(n):= \{ s=(s_i)_{i \ge 1} \mid \sum i s_i =n \}$$
be the set of all partitions of $n$ into a sum of positive integers.
Then the Witt partition function is given by
$$W(n) = \sum_{s \in {\mathscr P}(n)} \dfrac{(|s|-1)!}{s!} \prod
(-\phi(i))^{s_i}.$$

Hence the root multiplicity can be computed as follows (and we
obtain a combinatorial identity as well)
$$\dim \g(n) = \sum_{d|n} \frac{1}{d} \mu(d) \sum_{s \in {\mathscr
P}(n/d)} \dfrac{(|s|-1)!}{s!} \prod (-\phi(-i))^{s_i},$$ which must
be equal to $1$ for all $n \ge 1$.

\vskip 3mm

For example, for $n=4$, the formula \eqref{eq:mult} gives
$$\dim \g(4) = W(4) - \frac{1}{2} W(2).$$
Note that
$$\prod_{n=1}^{\infty} (1-q^n) = 1 - q - q^2 + q^5 + \cdots.$$
Consider the partitions
\begin{equation*}
\begin{aligned}
& \  4=3+1=2+2=2+1+1=1+1+1+1,\\
& \ 2=1+1.
\end{aligned}
\end{equation*}
Then we have
\begin{equation*}
\begin{aligned}
W(4) = & (-\phi(1))+(-\phi(3))(-\phi(1)) + \frac{1}{2!} (-\phi(2))^2
\\
&  + \frac{2!}{2!} (-\phi(1))^2 (-\phi(2)) + \frac{3!}{4!}
(-\phi(1))^4
= \frac{7}{4}, \\
W(2) = & (-\phi(2)) + \frac{1}{2!} (-\phi(1))^2 = \frac{3}{2},
\end{aligned}
\end{equation*}
which gives
$$\dim \g(4) = \frac{7}{4} - \frac{1}{2} \times \frac{3}{2} =1$$
as expected.

\vskip 3mm

(b) If $a_{ii} <0$, for $s=k \alpha_i$ $(k \ge 1)$, we have
$$s=s^+, \quad  d(s)=1, \quad \epsilon(s)=-1, \quad d(0)=0,
\quad\epsilon(0)=1,$$ which implies
$$V:= V^{(J)} = \bigoplus_{k=1}^{\infty} \C u_{k},$$
where $u_k$ is a vector with weight $-k \alpha_i$ $(k \ge 1)$.

\vskip 3mm

Then the denominator identity is equal to
$$\prod_{n=1}^{\infty} (1 - q^n)^{\dim \g(n)} = 1 -
\sum_{k=1}^{\infty} q^k = 1 - q - q^2 - \cdots.$$ Since
$\text{wt}(V)=\{1, 2, 3, \ldots \}$ and $\dim V_{k}=1$ for all $k
\ge 1$, we have
$$W(n) = \sum_{s \in {\mathscr P}(n)} \dfrac{(|s|-1)!}{s!}$$
and hence
$$\dim \g(n) = \sum_{d|n} \frac{1}{d} \mu(d) \sum_{s \in {\mathscr P}(n/d)}
\dfrac{(|s|-1)!}{s!}.$$

\vskip 2mm

Let $n=4$. It is easy to compute $W(4)=15/4$, $W(2)=3/2$, from which
we obtain
$$\dim \g(4) = \frac{15}{4} - \frac{1}{2} \times \frac{3}{2} = 3.$$

\vskip 2mm

On the other hand, recall that we have
$$\prod_{n=1}^{\infty}
(1-q^n)^{\dim \g(n)} = 1-\sum_{k=1}^{\infty} q^k =
\dfrac{1-2q}{1-q}.$$

\vskip 2mm \noindent Let ${\mathscr L} = \bigoplus_{n=1}^{\infty}
{\mathscr L}_{n}$ be the free Lie algebra on $k$ generators. Then
$$\text{ch} U({\mathscr L}) = \dfrac{1}{1-kq} = \prod_{n=1}^{\infty}
(1 - q^n)^{-\dim {\mathscr L}_{n}},$$ and the $\dim {\mathscr
L}_{n}$ is given by the $k$-th Necklace polynomial of degree $n$
$$\dim {\mathscr L}_{n} = N(k,n):=\dfrac{1}{n} \sum_{d|n} \mu(d)
k^{n/d}.$$ Thus
\begin{equation*}
\begin{aligned}
\text{ch}U(\g)& =\prod_{n=1}^{\infty} (1-q^n)^{-\dim \g(n)}=
\dfrac{1-q}{1-2q} \\
& = (1-q) \prod_{n=1}^{\infty} (1-q^n)^{-N(2,n)} \\
&=(1-q)^{-1} \prod_{n=2}^{\infty} (1-q^n)^{-N(2,n)}.
\end{aligned}
\end{equation*}
Therefore we obtain
$$
\dim \g(n) = \begin{cases} 1 \ \ & \text{if} \ n=1, \\
N(2,n) \ \ & \text{if} \  n \ge 2. \end{cases}
$$

\vskip 2mm

For example, when $n=4$, we have
$$\dim \g(4) = N(2,4) = \frac{1}{4}(2^4 - 2^2) =3$$
as expected.

\vskip 3mm

More generally, we obtain the following interesting combinatorial identity

$$\sum_{d|n} \frac{1}{d} \mu(d) \sum_{s \in {\mathscr P}(n/d)}
\dfrac{(|s|-1)!}{s!} = \frac{1}{n} \sum_{d|n} \mu(d) 2^{n/d}.
$$ \qed}
\end{example}

\vskip 3mm

\begin{example} \label{ex:mult2a} \hfill

\vskip 2mm

{\rm

Let $I = \{0,1\}$, $A= \left(\begin{matrix}  2 & -a \\ -a & -2
\end{matrix} \right)$ $(a \ge 1)$ and let $\g$ be the Borcherds-Bozec
algebra associated with $A$.

\vskip 2mm

Then $I^{\text{re}}=\{0\}$, $I^{\text{im}}=\{1\}$, $W=\{1, r_0\}$ is
the Weyl group and $F_{0}=\{s=k \alpha_{1} \mid k \ge 0 \}$.
Moreover, we have
$$d(0)=1, \ \ \epsilon(0)=1, \ \ d(k \alpha_1) =1, \ \ \epsilon(k
\alpha_1) = -1 \ \ \text{for all} \ \ k \ge 1.$$

\vskip 2mm

Take $J=\{0\}$. Then $W(J)=\{1\}$ and
$$\g_{0}^{(J)}= \langle e_0, f_0, h_0 \rangle \oplus \C h_1 \cong
sl(2, \C) \oplus \C h_1.$$

\vskip 2mm

\noindent Note that, since $l(w) + d(s) >0$, $s$ must have the form
$s=k \alpha_1$ with $k \ge 1$. It follows that
\begin{equation*}
V:=V^{(J)} = \bigoplus_{k=1}^{\infty} V_{J}(-k \alpha_1),
\end{equation*}
where $V_{J}(-k \alpha_1)$ is the irreducible $sl(2, \C)$-module
with highest weight $-k \alpha_1$. Since $\langle h_0, -k \alpha_1
\rangle = ka$, we have
$$\text{wt}(V) = \bigcup_{k=1}^{\infty} \{-k \alpha_1 - l \alpha_0 \mid
0 \le l \le k a\} = \bigcup_{k=1}^{\infty} \{(k,l) \mid 0 \le l \le
ka \}$$ with $\dim V_{(k,l)}=1$ for all $k,l$.

\vskip 3mm

Give a total ordering on $\text{wt}(V)$ by
$$(k,l)<(p,q) \quad \text{if and only if} \ \ k<p \ \ \text{or} \ \
k =p, \, l<q.$$

\noindent  For each weight $(m,n) \in \Z_{>0} \times \Z_{>0}$, let
$${\mathscr P}(m,n) = \{s=(s_{ij}) \mid s_{ij} \in \Z_{\ge 0}, \ (i,j)
\in \text{wt}(V), \ \sum s_{ij}(i,j) =(m,n) \}$$ be the set of
partitions of $(m,n)$ into a sum of elements in $\text{wt}(V)$ with
respect to the total ordering given above. Then we obtain
\begin{equation*}
\begin{aligned}
& W(m,n) = \sum_{s \in {\mathscr P}(m,n)} \dfrac{(|s|-1)!}{s!}, \\
& \dim \g(m,n) = \sum_{d|(m,n)} \frac{1}{d} \mu(d) \sum_{s \in
{\mathscr P}(m/d, n/d)} \dfrac{(|s|-1)!}{s!}.
\end{aligned}
\end{equation*}

\vskip 3mm

For example, take $a=2$ and $\alpha=(4,4)$. Then the total ordering
of $\text{wt}(V)$ defined above is given by
\begin{equation*}
\begin{aligned}
& (1,0), (1,1), 1,2), \\
& (2,0), (2,1), (2,2), (2,3), (2,4), \\
& (3,0), (3,1), (3,2), (3,3), (3,4), \ldots \\
& (4,0), (4,1), (4,2), (4,3), (4,4), \ldots \\
& \qquad \qquad \cdots \cdots
\end{aligned}
\end{equation*}

\vskip 2mm

\noindent Consider the partitions
\begin{equation*}
\begin{aligned}
(4,4)& = (3,4) + (1,0) = (3,3) + (1,1) = (3,2) + (1,2) = (2,4) +
(1,0) + (1,0) \\
& = (2,4) +(2,0) = (2,3) + (2,1) = (2,3) + (1,1) + (1,0) = (2,2) +
(2,2) \\
& = (2,2) + (1,2) + (1,0) = (2,2) + (1,1) + (1,1) = (2,1) + (1,2) +
(1,1) \\
&=(2,0) + (1,2) + (1,2) = (1,2) + (1,2) + (1,0) + (1,0) \\
& = (1,2) + (1,1) + (1,1) + (1,0) = (1,1,) + (1,1) + (1,1) + (1,1),
\end{aligned}
\end{equation*}
which yield
\begin{equation*}
\begin{aligned}
W(4,4) &= 1 + 1 + 1 + 1 + \frac{2!}{2!} + 1 + 1 + 2 + \frac{1}{2!} +
2! + \frac{2!}{2!} + 2! \\
&  + \frac{2!}{2!} + \frac{3!}{2!2!} + \frac{3!}{2!} + \frac{3!}{4!}
\, = 20 + \frac{1}{4}.
\end{aligned}
\end{equation*}

\vskip 2mm

Next, the partitions
$$(2,2) = (1,2) + (1,0) = (1,1) + (1,1)$$
give
$$W(2,2) = 1 + 1 + \frac{1}{2!} = 2 + \frac{1}{2}.$$

Hence we obtain
$$\dim \g(4,4) = W(4,4) - \frac{1}{2} W(2,2) = 19.$$
\qed }
\end{example}

\vskip 3mm

\begin{example} \label{ex:mult2b} \hfill

\vskip 2mm

{\rm  Let $I = \{0,1\}$. $A=\left(\begin{matrix} 2 & -a \\
-a & 0
\end{matrix} \right)$ $(a \ge 1)$ and $\g$ be the Borcherds-Bozec
algebra associated with $A$.

\vskip 2mm

Then $I^{\text{re}}=\{0\}$, $I^{\text{im}}=\{1\}$, $W=\{1, r_0\}$ is
the Weyl group and $F_{0}=\{s=k \alpha_{1} \mid k \ge 0 \}$.
Moreover, we have
$$d(k \alpha_1) =k, \ \ \epsilon(k
\alpha_1) = \phi(k) \ \ \text{for all} \ \ k \ge 0.$$

\vskip 2mm

Take $J=\{0\}$. Then $W(J)=\{1\}$ and
$$\g_{0}^{(J)}= \langle e_0, f_0, h_0 \rangle \oplus \C h_1 \cong
sl(2, \C) \oplus \C h_1.$$

\vskip 2mm

\noindent Hence $s$ must have the form $s=k \alpha_1$ with $k \ge 1$
and
\begin{equation*}
V:=V^{(J)} = \bigoplus_{k=1}^{\infty} (-\phi(k)) V_{J}(-k \alpha_1),
\end{equation*}
where
$$\text{wt}(V) = \bigcup_{k=1}^{\infty} \{-k \alpha_1 - l \alpha_0 \mid
0 \le l \le k a\} = \bigcup_{k=1}^{\infty} \{(k,l) \mid 0 \le l \le
ka \}$$ with $\dim V_{(k,l)}=-\phi(k)$ for all $k,l$.

\vskip 2mm
\noindent  Therefore, we have
\begin{equation*}
\begin{aligned}
& W(m,n) = \sum_{s \in {\mathscr P}(m,n)} \dfrac{(|s|-1)!}{s!} \prod_{k=1}^{\infty} \prod_{l=1}^{ka} (-\phi(k))^{s_{kl}}, \\
& \dim \g(m,n) = \sum_{d|(m,n)} \frac{1}{d} \mu(d) \sum_{s \in
{\mathscr P}(m/d, n/d)} \dfrac{(|s|-1)!}{s!}\prod_{k=1}^{\infty}
\prod_{l=1}^{ka} (-\phi(k))^{s_{kl}}.
\end{aligned}
\end{equation*}

\vskip 3mm

For example, take $a=2$ and $\alpha=(4,4)$. Then using the
partitions in Example \ref{ex:mult2a}, we get
$$W(4,4) = 16 + \frac{1}{4}, \quad W(2,2) = 2 + \frac{1}{2},$$
which gives
$$\dim \g(4,4) = W(4,4) - \frac{1}{2} W(2,2) = 15.$$
\qed}
\end{example}

\vskip 7mm

\section{Monster Borcherds-Bozec algebra}

\vskip 3mm

Let $I$ be an index set and let $f:I \rightarrow \Z_{>0}$ be a
function such that $f(i)=1$ for all $i \in I^{\text{re}}$. Set
$${\widetilde I}:= \{(i,p) \mid i \in I, \ 1 \le p \le f(i) \}
\subset I \times \Z_{>0}.$$

\vskip 3mm

Let ${\widetilde A}$ be a Bocherds-Cartan matrix indexed by
${\widetilde I}$. Assume that for each pair $(i,j) \in I \times I$,
there is a block submatrix of size $f(i) \times f(j)$ in
${\widetilde A}$ such that all the entries of ${\widetilde A}$ in
this block submatrix are the same, say, $a_{ij}$. Then we obtain a
Borcherds-Cartan matrix $A=(a_{ij})_{i,j \in I}$. In this case, we
say that $A$ is the Borcherds-Cartan matrix with {\it charge}
${\mathbf f}=(f(i) \mid i \in I)$ induced from ${\widetilde A}$.
Conversely, given a Borcherds-Cartan matrix $A=(a_{ij})_{i, j \in
I}$ and a sequence ${\mathbf f}=(f(i) \mid i \in I)$, we obtain a
Borcherds-Cartan matrix ${\widetilde A} = ({\widetilde a}_{(i,p),
(j,q)})$ indexed by ${\widetilde I}$ by setting
$${\widetilde a}_{(i,p), (j,q)}=a_{ij} \quad \text{for} \ \ i, j \in I, \
1 \le p \le f(i), \ 1
\le q \le f(j).$$

\vskip 3mm

Hence we can develop the theory of Borcherds-Bozec algebras
associated with Borcherds-Cartan matrices $A$ with charge ${\mathbf
f}$ by taking the corresponding theory for the Borcherds-Bozec
algebras associated with ${\widetilde A}$.

\vskip 3mm

Let $I=\{-1\} \cup \{1,2,3, \ldots\}$ and let
\begin{equation} \label{eq:matrix-monster}
A=(a_{ij})_{i,j \in I} = (-(i+j))_{i,j \in I}
= \left(\begin{matrix}
\ 2 & \ 0 & -1 & -2 & \cdots \\
\ 0 & -2 & -3 & -4 & \cdots \\
-1 & -3 & -4 & -5 & \cdots \\
-2 & -4 & -5 & -6 & \cdots \\
\vdots & \vdots & \vdots & \vdots & \ddots
\end{matrix} \right)
\end{equation}
be the Borcherds-Cartan matrix with charge ${\mathbf c}=(c(i) \mid i
\in I)$, where $c(i)$ is the $i$-th coefficient of the elliptic
modular function
\begin{equation} \label{eq:j}
j(q) - 744 = \sum_{n=-1}^{\infty} c(n) q^n = q^{-1} + 196884 q +
21493760 q^2 + \cdots.
\end{equation}

\vskip 2mm

\noindent The associated Borcherds algebra $L$ is called the {\it
Monster Lie algebra} and played an important role in Borcherds'
proof of the Moonshine Conjecture \cite{Bor88, Bor92}. Note that
$\dim L_{\alpha_i} = c(i)$ for all $i \in I$. By setting
$\text{deg}\, \alpha_i = (1, i)$ for all $i \in I$, the Lie algebra
$L$ has a $(\Z \times \Z)$-grading
\begin{equation} \label{eq:Lie-monster}
L= \bigoplus_{m,n \in \Z} L_{(m,n)} \quad \text{such that} \ \ \dim
L_{(m,n)} = c(mn) \ \ \text{for all} \ \ m,n \ge 1.
\end{equation}

\vskip 2mm

On the other hand, for $m,n \ge 1$, let
\begin{equation}\label{eq:partition-monster}
{\mathscr P}(m,n) = \{s=(s_{ij})_{i,j \ge 1} \mid s_{ij} \in \Z_{\ge
0}, \ \sum s_{ij} (i,j) = (m,n) \}
\end{equation}
be the set of all partitions of $(m,n)$ into a sum of pairs of
positive integers with respect to any partial ordering. Then by the
root multiplicity formula for Borcherds algebras \cite{Kang94b}, the
Witt partition function is given by
\begin{equation} \label{eq:Witt-monster}
W(m,n) = \sum_{s \in {\mathscr P}(m,n)} \dfrac{(|s|-1)!}{s!}
\prod_{i,j} c(i+j-1)^{s_{ij}}
\end{equation}
and we obtain
\begin{equation} \label{eq:mult-monster}
\dim L_{(m,n)} =\sum_{d|(m,n)} \frac{1}{d} \, \mu(d)\sum_{s \in
{\mathscr P}(m/d,n/d)} \dfrac{(|s|-1)!}{s!} \prod_{i,j}
c(i+j-1)^{s_{ij}}.
\end{equation}

\vskip 2mm

Combined with \eqref{eq:Lie-monster}, we obtain a combinatorial
identity
\begin{equation} \label{eq:coeff-j}
c(mn) =\sum_{d|(m,n)} \frac{1}{d} \, \mu(d)\sum_{s \in {\mathscr
P}(m/d,n/d)} \dfrac{(|s|-1)!}{s!} \prod_{i,j} c(i+j-1)^{s_{ij}}.
\end{equation}

\vskip 2mm

\noindent In \cite{JLW}, it was noticed that, due to the identity
\eqref{eq:coeff-j}, the coefficients $c(1)$, $c(2)$, $c(3)$ and
$c(5)$ determine all other coefficients recursively.

\vskip 3mm

We now turn to the Borcherds-Bozec algebra ${\mathscr L}$ associated
with the Borcherds-Cartan matrix $A$ with charge ${\mathbf c}$ given
in \eqref{eq:matrix-monster} and \eqref{eq:j}, the {\it Monster
Borcherds-Bozec algebra}. We have $I^{\text{re}}=\{-1\}$,
$I^{\text{im}}=\{1,2,3, \ldots \}$ and
\begin{equation} \label{eq:F-monster}
F_{0} = \{0\} \cup \{l \alpha_k \mid k, l \ge 1 \}.
\end{equation}

\vskip 2mm

To apply our root multiplicity formula \eqref{eq:mult}, we take
$J=\{-1\}$. Then $W(J)=\{1\}$ and the condition $l(w) + d(s)>0$
implies $s=l \alpha_k$ with $k,l \ge 1$. Thus we have

\begin{equation} \label{eq:V-monster}
V=V^{(J)}=\bigoplus_{l=1}^{\infty} \bigoplus_{k=1}^{\infty} c(k)
V_{J}(-l \alpha_k),
\end{equation}
where $V_{J}(-l \alpha_k)$ is the irreducible $sl(2, \C)$-module
with highest weight $-l \alpha_k$. By the standard $sl(2,
\C)$-theory, since $\langle h_{-1}, -l \alpha_k \rangle = l(k-1)$,
we have
\begin{equation} \label{eq:wt-monster}
\begin{aligned}
\text{wt}(V) & = \bigcup_{l=1}^{\infty} \bigcup_{k=1}^{\infty} \{-l
\alpha_k -j \alpha_{-1} \mid 0 \le j \le l(k-1) \} \\
&= \bigcup_{l=1}^{\infty} \bigcup_{k=1}^{\infty} \{(l,k,j) \mid 0
\le j \le l(k-1) \}
\end{aligned}
\end{equation}
and
$$\dim V(l,k,j) = c(k) \quad \text{for all} \ \ k,l \ge 1, \ \ 0 \le
j \le l(k-1).$$

\vskip 2mm

Give a (lexicographic) total ordering on $\text{wt}(V)$ by $(l,k,j)
> (l',k',j')$ if and only if  (i) $l < l'$,  or (ii) $l=l'$, $k <
k'$,  or (iii) $l=l'$,  $k=k'$,  $j<j'$.

\vskip 3mm

For $\beta \in {\mathsf Q}_{+}$, let
\begin{equation} \label{eq:partition-bozec}
{\mathscr P}(\beta)=\{s=(s_{(l,k,j)}) \mid \sum s_{(l,k,j)} (l
\alpha_k + j \alpha_{-1}) = \beta \}
\end{equation}
be the set of all partitions of $\beta$ into a sum of elements in
$\text{wt}(V)$ with respect to the ordering given above. Since
$\text{mult}(l,k,j) = c(k)$ for all $k, l \ge 1$, $0 \le j \le
l(k-1)$, the Witt partition function is given by
\begin{equation} \label{eq:witt-bozec}
W(\beta) = \sum_{s \in {\mathscr P}(\beta)} \dfrac{(|s|-1)!}{s!}
\prod_{l,k,j} c(k)^{s_{(l,k,j)}}.
\end{equation}

\vskip 2mm

Therefore, for all $\alpha \in \Delta^{+}$, we have

\begin{equation} \label{eq:mult-bozec}
\dim {\mathscr L}_{\alpha} = \sum_{d |\alpha} \frac{1}{d} \, \mu(d)
\sum_{s \in {\mathscr P}(\alpha /d)} \dfrac{(|s|-1)!}{s!}
\prod_{l,k,j} c(k)^{s_{(l,k,j)}}.
\end{equation}

\vskip 3mm

\begin{example} \label{ex:monster-bozec} \hfill
{\rm

Let $\alpha = 2 \alpha_2 + 4 \alpha_1 + 2 \alpha_{-1}$. Then
$$\dim {\mathscr L}_{\alpha}= W(\alpha) - \frac{1}{2} W(\alpha
/2).$$ Note that the ordering on $\text{wt}(V)$ is given as follows.
$$\alpha_{1}, \ \alpha_2, \ \alpha_2 + \alpha_{-1}, \ 2 \alpha_1, \ 2
\alpha_2, \ 2 \alpha_2 + \alpha_{-1}, \ 2 \alpha_2 + 2 \alpha_{-1},
\cdots.$$

\vskip 2mm

To compute $W(\alpha)$ and $W(\alpha / 2)$, we consider the
partitions
\begin{equation*}
\begin{aligned}
\alpha & = 2 \alpha_2 + 4 \alpha_1 + 2 \alpha_{-1} \\
& = (2 \alpha_2 + 2 \alpha_{-1}) + 2 (2 \alpha_1) = (2 \alpha_2 + 2
\alpha_{-1}) + 4 (\alpha_1) \\
&= 2 (2 \alpha_1) + 2 (\alpha_2 + \alpha_{-1}) = (2 \alpha_1) + 2
(\alpha_2 + \alpha_{-1}) + 2 (\alpha_1) \\
&= 2 (\alpha_2 + \alpha_{-1}) + 4 (\alpha_1),\\
\alpha / 2 & = \alpha_2 + 2 \alpha_1 + \alpha_{-1} \\
& = (2 \alpha_1) + (\alpha_2 + \alpha_{-1})  = (\alpha_2 +
\alpha_{-1}) + 2 (\alpha_1),
\end{aligned}
\end{equation*}
which yield
\begin{equation*}
\begin{aligned}
W(\alpha) = & \frac{2!}{2!}\, c(2)\, c(1)^2 + \frac{4!}{4!}\, c(2)
\, c(1)^4
+ \frac{3!}{2!2!}\, c(2)^2 \, c(1)^2 \\
& + \frac{4!}{2!2!}\, c(1) \, c(2)^2 \, c(1)^2 + \frac{5!}{2! 4!} \,
c(2)^2 \,
c(1)^4 \\
= & \frac{5}{2} \, c(1)^4 \, c(2)^2 + c(1)^4 \, c(2) + 6 \, c(1)^3 \, c(2)^2 \\
& + \frac{3}{2} \, c(1)^2 \, c(2)^2 + c(1)^2 \, c(2), \\
W(\alpha / 2) = & c(1)\, c(2) + \frac{2!}{2!} \, c(2) \, c(1)^2 \\
= &c(1)^2 \, c(2) + c(1)\, c(2).
\end{aligned}
\end{equation*}

Hence we obtain
\begin{equation*}
\begin{aligned}
\dim {\mathscr L}_{\alpha} = & \frac{5}{2}\, c(1)^4 \, c(2)^2 +
c(1)^4 \,
c(2) + 6 \, c(1)^3 \, c(2)^2 \\
& + \frac{3}{2}\, c(1)^2 \, c(2)^2 + \frac{1}{2}\, c(1)^2 \, c(2) -
\frac{1}{2} \, c(1) \, c(2).
\end{aligned}
\end{equation*} \qed
}
\end{example}

\vskip 3mm

As in the case with the Monster Lie algebra, the Monster
Borcherds-Bozec algebra ${\mathscr L}$ has a $(\Z_{>0} \times
\Z_{>0})$-grading by setting $\text{deg}\, \alpha_i = (1, i)$ for $i
\in I$. Since $$\text{deg}\, (-l \alpha_k - j \alpha_{-1}) = - (l+j,
lk-j),$$ we have
\begin{equation} \label{eq:wtV-bozec}
\begin{aligned}
& \text{wt}(V)  = \bigcup_{l=1}^{\infty} \bigcup_{k=1}^{\infty}
\{(l+j, lk-j) \mid 0 \le j \le l(k-1) \} \\
&= \{(m,n) \mid m=l+j, \, n=lk-j, \ k,l \ge 1, \, 0 \le j \le l(k-1)
\}.
\end{aligned}
\end{equation}

\vskip 2mm

Each solution $(l,k,j)$ of the equation
$$m = l+j, \ n=lk-j, \ 0 \le j \le l(k-1)$$
has the contribution of $c(k) = c\left(\dfrac{m+n-l}{l}\right)$ to
the multiplicity of $(m,n)$ in $V$. Also the condition $0 \le
j=m-l-lk-n \le l(k-1)$ implies $1 \le l \le \text{min} (m,n)$.
Therefore, we obtain
$$\text{mult}(m,n) := d(m,n)  =
\sum_{l=1}^{\text{min}(m,n)} c\left(\dfrac{m+n-l}{l}\right),$$ where
$c(x)=0$ if $x$ is not an integer. Using the partition set
${\mathscr P}(m,n)$ in \eqref{eq:partition-monster}, the Witt
partition function is equal to
\begin{equation} \label{witt-bozec-a}
W(m,n) = \sum_{s \in {\mathscr P}(m,n)} \dfrac{(|s|-1)!}{s!}
\prod_{i,j} d(i,j)^{s_{ij}},
\end{equation}
and we obtain
\begin{equation} \label{eq:mult-bozec-a}
\dim {\mathscr L}_{(m,n)} = \sum_{d | (m,n)} \frac{1}{d} \mu(d)
\sum_{s \in {\mathscr P}(m/d, n/d)} \dfrac{(|s|-1)!}{s!} \prod_{i,j}
d(i,j)^{s_{ij}}.
\end{equation}

\vskip 3mm

\begin{example} \label{ex:bozec-a} \hfill

{\rm As an illustration, we will compute
$$\dim {\mathscr L}_{(4,2)}= W(4,2)- \frac{1}{2} W(2,1).$$

Using the partitions
$$(4,2) = (3,1)+(1,1) = (2,1) + (2,1),$$
we have $$W(4,2) = d(4,2) - d(3,1) \, d(1,1) + \frac{1}{2} \,
d(2,1)^2.$$ It is easy to see that
\begin{equation*}
d(1,1) = c(1), \ \ d(2,1) = c(2), \ \ d(3,1) = c(3), \ \ d(4,2) =
c(5) + c(2).
\end{equation*}
It follows that
$$W(4,2) = c(5)
+ c(3)\, c(1) + \frac{1}{2}\, c(2)^2 + c(2).$$

\vskip 2mm

Since $(2,1)$ has only one partition, we have
$$W(2,1) = d(2,1) = c(2).$$
Hence we obtain
\begin{equation*}
\dim {\mathscr L}_{(4,2)}  = c(5) + c(3)\, c(1) + \frac{1}{2} \,
c(2)^2 + \frac{1}{2}\, c(2).
\end{equation*}
Using the identity \eqref{eq:coeff-j}, we get a simpler expression
$$\dim {\mathscr L}_{(4,2)} = c(8) + c(2).$$
\qed}
\end{example}

\vskip 7mm

\section{Quiver varieties}

\vskip 3mm

Let $Q=(I, \Omega)$ be a locally finite quiver with loops, where $I$
is the set of vertices and $\Omega$ is the set of arrows. For an
arrow $h \in \Omega$, we denote by $\text{out}(h)$ (resp.
$\text{in}(h)$) the {\it outgoing vertex} (resp. {\it incoming
vertex}) of $h$. We often write $h: i \rightarrow j$ when
$\text{out}(h)=i$, $\text{in}(h) =j$.

\vskip 3mm

For $i \in I$, let $g_i$ be the number of loops at $i$ and for $i
\neq j$, let $c_{ij}$ denote the number of arrows in $\Omega$ from
$i$ to $j$. We denote by $\Omega(i)$ the set of loops at $i$. Define
a matrix $A_{Q}=(a_{ij})_{i,j \in I}$ by
\begin{equation} \label{eq:A_Q}
a_{ij} := \begin{cases} 2 - 2g_i \ \ & \text{if} \ i=j,\\
-c_{ij}-c_{ji} \ \ & \text{if} \ i \neq j.
\end{cases}
\end{equation}

\vskip 2mm

\noindent Then $A_{Q}$ is a {\it symmetric} Borcherds-Cartan matrix.
Note that $I^{\text{re}}= \{ i \in I \mid g_i =0 \}$, $I^{\text{im}}
= \{i \in I \mid g_{i} \ge 1 \}$. We will denote by $\g_{Q}$ the
Borcherds-Bozec algebra associated with $A_{Q}$.

\vskip 3mm

\begin{definition} \label{def:rep} \hfill

\vskip 2mm

{\rm
\begin{enumerate}
\item[(a)] A {\it representation} of $Q$ is a pair $(V,x)$, where
$V=\bigoplus_{i\in I} V_i$ is an $I$-graded vector space and
$x=(x_{h}:V_{\text{out}(h)} \rightarrow V_{\text{in}(h)})_{h \in
\Omega}$ is a family of linear maps such that

\begin{itemize}
\item[(i)] $V_{i}=0$ for all but finitely many $i \in I$,
\item[(ii)] $\dim V_i < \infty$ for all $i \in I$.
\end{itemize}

\vskip 2mm

\item[(b)] A {\it morphism} $\phi:(V,x) \rightarrow (W,y)$ of
representations consists of a collection of linear maps
$\phi=(\phi_i: V_i \rightarrow W_i)_{i \in I}$ such that for all $h
\in \Omega$ the following diagram is commutative.
\vskip 2mm
\begin{equation} \label{diagram:morphism}
\xymatrix{ V_{\text{out}(h)} \ar[d]^-{x_{h}}
\ar[r]^-{\phi_{\text{out(h)}}}&
W_{\text{out}(h)} \ar[d]^-{y_{h}} \\
V_{\text{in}(h)}  \ar[r]^-{\phi_{\text{in}(h)}} & W_{\text{in}(h)}}
\end{equation}
\end{enumerate}}
\end{definition}

\vskip 3mm

Let $V=\bigoplus_{i \in I} V_i$ be a representation of $Q$. We
define its {\it dimension vector} to be
$$\underline{\dim}\, V = \sum_{i \in I} (\dim V_i) \alpha_i \in {\mathsf Q}_{+}.$$

For an element $\alpha \in {\mathsf Q}_{+}$, fix a representation
$V$ with $\underline{\dim}\, V = \alpha$ and let
\begin{equation} \label{eq:repn variety}
E(\alpha):= \bigoplus_{h \in \Omega} \text{Hom} (V_{\text{out}(h)},
V_{\text{in}(h)}).
\end{equation}
\vskip 2mm

\noindent Then the group $G(\alpha):= \prod_{i \in I} GL(V_{i})$
acts on $E(\alpha)$ by conjugation

\begin{equation} \label{eq:G_V action}
(g \cdot x)_{h} = g_{\text{in}(h)} x_{h} g_{\text{out}(h)}^{-1}
\quad \text{for} \ \ g=(g_i)_{i \in I}, \, x=(x_h)_{h \in \Omega}.
\end{equation}

\vskip 2mm

\noindent We identify the set of isomorphism classes of
representations of $Q$ with dimension vector $\alpha$ with the set
of $G(\alpha)$-orbits in $E(\alpha)$.

\vskip 3mm

\begin{definition} \label{def:nilpotent}

\vskip 2mm {\rm

Let $x=(x_{h})_{h \in \Omega} \in E(\alpha)$.

\vskip 2mm
\begin{enumerate}
\item[(a)] $x$ is {\it nilpotent} if there exists an $I$-graded flag
$$L=(0=L_0 \subset L_1 \subset \cdots \subset L_r =V)$$  such
that $$x_{h}(L_k) \subset L_{k-1} \quad \text{for all} \ \ h \in
\Omega, \ 1 \le k \le r.$$

\vskip 2mm

\item[(b)] $x$ is {\it 1-nilpotent} if for each $i \in I^{\text{im}}$,
there exists a flag
$$L(i) = (0=L(i)_{0} \subset L(i)_{1} \subset \cdots \subset
L(i)_{t} = V_{i})$$  such that
$$x_{h}(L(i)_{k}) \subset L(i)_{k-1} \quad \text{for all} \ \ h \in
\Omega(i), \ 1 \le k \le t.$$
\end{enumerate}}
\end{definition}

\vskip 2mm

Set
\begin{equation*} 
E(\alpha)^{\text{nil}} := \{x \in E(\alpha) \mid \text{$x$ is
nilpotent} \}, \quad  E(\alpha)^{\text{1-nil}} := \{x \in E(\alpha)
\mid \text{$x$ is 1-nilpotent} \}.
\end{equation*}

\vskip 2mm

\noindent Then $E(\alpha)^{\text{nil}}$ and
$E(\alpha)^{\text{1-nil}}$ are Zariski-closed subvarieties of
$E(\alpha)$.

\vskip 3mm

Let $q$ be a power of some prime and let
$d_{\alpha}^{\text{nil}}(q)$ (respectively,
$d_{\alpha}^{\text{1-nil}}(q)$) denote the number of isomorphism
classes of nilpotent (respectively, 1-nilpotent) absolutely
indecomposable representations of $Q$ over $\F_{q}$ with dimensional
vector $\alpha$.

\vskip 3mm

\begin{proposition} \label{prop:Kac poly} \cite{BSV2016} \
{\rm Let $Q$ be a locally finite quiver with loops and let $\alpha
\in {\mathsf Q}_{+}$. Then the following statements hold.

\vskip 2mm

(a) There exist unique polynomials $A_{\alpha}^{\text{nil}}(t),
A_{\alpha}^{\text{1-nil}}(t) \in \Z[t]$ such that

\vskip 2mm
\begin{itemize}
\item[(i)] $A_{\alpha}^{\text{nil}}(q) =
d_{\alpha}^{\text{nil}}(q)$, $A_{\alpha}^{\text{1-nil}}(q) =
d_{\alpha}^{\text{1-nil}}(q)$ for all $q$.

\item[(ii)] $A_{\alpha}^{\text{nil}}(1) =
A_{\alpha}^{\text{1-nil}}(1)$.
\end{itemize}

\vskip 3mm

(b) $\dim (\g_{Q})_{\alpha}= A_{\alpha}^{\text{1-nil}}(0)$. }
\end{proposition}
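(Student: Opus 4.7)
The plan is to address (a)(i), (a)(ii), and (b) in sequence, using Hall-algebra bookkeeping, a torus-equivariance argument, and Bozec's sheaf-theoretic realization of $U^{+}(\g_{Q})$ respectively.

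For (a)(i), uniqueness of each polynomial is automatic from interpolation at infinitely many values. For existence, I would first show that the weighted count $I_{\alpha}^{\bullet}(q) := \sum_{[M]} 1/|\mathrm{Aut}(M)|$, summed over isomorphism classes of nilpotent (respectively $1$-nilpotent) $\F_{q}$-representations of $Q$ with dimension vector $\alpha$, is a polynomial in $q$. This follows by stratifying $E(\alpha)^{\bullet}(\F_{q})$ by $G(\alpha)(\F_{q})$-orbits, applying orbit--stabilizer, and using that $|G(\alpha)(\F_{q})|$ and the point counts of the strata are polynomial in $q$. The counts $d^{\bullet}_{\alpha}(q)$ of absolutely indecomposables are then recovered from the $I^{\bullet}_{\beta}(q)$ via the standard logarithm/M\"obius inversion in the Hall algebra, a universal polynomial transformation; this yields polynomials $A^{\bullet}_{\alpha}(t) \in \Z[t]$ with the prescribed values on prime powers.

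For (a)(ii), the key geometric point is that a $1$-nilpotent representation fails to be nilpotent precisely when no global $I$-graded filtration is compatible with the non-loop arrows. I would stratify the difference $E(\alpha)^{\text{1-nil}} \setminus E(\alpha)^{\text{nil}}$ by locally closed $G(\alpha)$-stable subvarieties indexed by the failure of global filtrability, and arrange that each stratum carries a free action of a nontrivial torus obtained by simultaneously rescaling the obstructing non-loop arrows. The $\F_{q}$-point count of each such stratum is then divisible by $q-1$, so its contribution to $I^{\text{1-nil}}_{\alpha}(q) - I^{\text{nil}}_{\alpha}(q)$ vanishes at $q=1$. Since the Hall-algebra logarithm is a universal polynomial identity, this forces $A^{\text{1-nil}}_{\alpha}(1) = A^{\text{nil}}_{\alpha}(1)$. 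Setting up this torus-equivariant stratification rigorously, and checking that it truly covers the whole difference, is where I expect the main technical difficulty to lie.

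For (b), I would invoke Bozec's realization \cite{Bozec2014b,Bozec2014c} of $U^{+}(\g_{Q})$ as an algebra of constructible functions on the Lagrangian subvariety $\Lambda(\alpha) \subset E(\alpha)^{\text{1-nil}}$ of strongly semi-nilpotent representations, together with the resulting crystal/canonical basis indexed by the irreducible components of $\Lambda(\alpha)$. The triangular decomposition of $\g_{Q}$ and the Poincar\'e--Birkhoff--Witt theorem identify $\dim(\g_{Q})_{\alpha}$ with the number of \emph{cuspidal} components of $\Lambda(\alpha)$, namely those not produced by the shuffle/induction product from strictly smaller dimension vectors. The remaining step is to match this cuspidal count with $A^{\text{1-nil}}_{\alpha}(0)$; this is the Borcherds--Bozec analogue, proved in \cite{BSV2016}, of the Hausel--Letellier--Rodriguez-Villegas theorem, and relies on a purity argument via the Weil conjectures which reads off the constant term of the Kac polynomial as the dimension of the primitive (cuspidal) subspace.
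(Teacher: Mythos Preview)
The paper does not prove this proposition at all: it is stated with the citation \cite{BSV2016} and no proof is given. There is therefore nothing in the present paper to compare your proposal against; the result is imported wholesale from Bozec--Schiffmann--Vasserot.

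That said, as a sketch of how the cited result is actually established, your outline is broadly in the right spirit but differs in one substantive place. For (a)(i) and (b) your plan matches the structure of \cite{BSV2016}: polynomiality via Hall-algebra/Krull--Schmidt bookkeeping, and the identification of $A_{\alpha}^{\text{1-nil}}(0)$ with $\dim(\g_{Q})_{\alpha}$ via Bozec's Lagrangian construction and a purity argument. For (a)(ii), however, \cite{BSV2016} does not proceed by a direct torus-equivariant stratification of $E(\alpha)^{\text{1-nil}}\setminus E(\alpha)^{\text{nil}}$ as you propose; rather, they compare both counts to the ordinary Kac polynomial $A_{\alpha}(t)$ and show $A_{\alpha}^{\text{nil}}(1)=A_{\alpha}^{\text{1-nil}}(1)=A_{\alpha}(1)$ via generating-series identities (a ``plethystic'' or power-structure argument) that collapse at $q=1$. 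Your torus idea is plausible heuristically, but making the stratification exhaustive and the torus action genuinely free on each stratum is delicate, and it is not the route taken in the reference.
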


\vskip 3mm

We will show that the set of positive roots of $\g_{Q}$ are in 1-1
correspondence with the set of dimension vectors of 1-nilpotent
indecomposable representations of $Q$. For this purpose, we briefly
recall some of important results on mixed Hodge polynomials and
$E$-polynomials (see \cite{HRV08} for more details).

\vskip 3mm

\begin{definition} \label{def:E-poly}

\vskip 2mm

{\rm  Let $X$ be a complex algebraic variety.

\vskip 2mm
\begin{enumerate}
\item[(a)] The {\it $E$-polynomial of $X$} is defined to be
$$E(X;x,y):=\sum_{p,q,j \ge 0} (-1)^j h_{c}^{p,q;j}(X) x^p y^q,$$
where $h_{c}^{p,q;j}(X)$ are compactly supported mixed Hodge
numbers.

\vskip 3mm

\item[(b)] A {\it spreading-out of $X$} is a separated scheme
$\widetilde{X}$ over a finitely generated $\Z$-algebra $R$ together
with an embedding $\varphi: R \hookrightarrow \C$ such that $X \cong
\widetilde{X}_{\varphi}$, the extension of scalars of
$\widetilde{X}$.

\vskip 3mm

\item[(c)] We say that $X$ has {\it polynomial count} if there exists a
polynomial $P_{X}(t) \in \Z[t]$ and a spreading-out $\widetilde{X}$
such that for every homomorphism $\phi:R \rightarrow \F_{q}$, we
have
$$P_{X}(q) = \# \{ \text{$\F(q)$-points of $\widetilde{X}_{\phi}$} \}.$$
\end{enumerate}}
\end{definition}

\vskip 3mm

\begin{theorem} \label{thm:Katz} \cite{HRV08} \
{\rm Suppose that a complex variety $X$ has  polynomial count with
the counting polynomial $P_{X}(t) \in \Z[t]$. Then we have
$$E(X;x,y) = P_{X}(xy).$$
}
\end{theorem}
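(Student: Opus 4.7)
The plan is to follow the Katz appendix to \cite{HRV08}, using $\ell$-adic étale cohomology and Deligne's theory of weights. First, I would fix a spreading-out $\widetilde{X}$ of $X$ over a finitely generated $\Z$-algebra $R$ together with an embedding $\varphi : R \hookrightarrow \C$ such that $X \cong \widetilde{X}_\varphi$. Shrinking $\text{Spec}(R)$ if necessary, I may assume $\widetilde{X}\to \text{Spec}(R)$ is separated of finite type, that the singular cohomology with compact support $H^*_c(X,\Q)$ has the same dimensions as the $\ell$-adic cohomology of the geometric fibers, and that these identifications are compatible with the weight filtrations. For every ring homomorphism $\phi : R \rightarrow \F_q$ in a suitable cofinite set of closed points, the Grothendieck-Lefschetz trace formula yields
$$P_X(q) = \#\widetilde{X}_\phi(\F_q) = \sum_i (-1)^i \text{tr}\bigl(\text{Frob}_q \mid H^i_c(\widetilde{X}_\phi \otimes \overline{\F_q}, \overline{\Q}_\ell)\bigr).$$

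Next, I would invoke Deligne's purity theorem: every eigenvalue of $\text{Frob}_q$ on $\text{Gr}^W_w H^i_c$ is an algebraic integer all of whose conjugates have absolute value $q^{w/2}$, for some integer weight $0 \le w \le i$. The crucial step is to combine this with the polynomial count hypothesis: since $P_X(q) \in \Z[q]$ takes the stated values simultaneously for all $\phi$ and all large $q$, a Vandermonde-type linear independence argument on the distinct absolute values $q^{w/2}$ (together with the fact that the eigenvalues vary in a controlled way under change of prime) forces every Frobenius eigenvalue to be a genuine integer power $q^k$, i.e., of pure Tate type. Equivalently, the mixed Hodge structure on $H^*_c(X,\C)$ is of Hodge-Tate type, so $h^{p,q;j}_c(X) = 0$ unless $p = q$.

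Finally, I would match coefficients. Under the Hodge-Tate condition the $E$-polynomial collapses to
$$E(X;x,y) = \sum_{k,j} (-1)^j h^{k,k;j}_c(X)\,(xy)^k,$$
while for each $k$ the coefficient of $q^k$ in $P_X(q)$ equals $\sum_j (-1)^j \dim \text{Gr}^W_{2k} H^{2k-j}_c(\widetilde{X}_\phi, \overline{\Q}_\ell)$ by the trace formula, which under the comparison theorems coincides with $\sum_j (-1)^j h^{k,k;j}_c(X)$. Therefore $E(X;x,y) = P_X(xy)$ as desired.

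The main obstacle is the Hodge-Tate purity step in the middle paragraph: extracting from an integer-polynomial count the statement that no weight-$w$ eigenvalues of non-Tate type appear. This requires the full strength of Deligne's Weil II, an arithmetic genericity argument to vary $q$ independently, and care in choosing the spreading-out so that $\ell$-adic and complex Hodge data are compatibly identified. Once purity is secured, the remaining combinatorics is essentially formal.
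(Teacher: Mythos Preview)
The paper does not prove this theorem; it is quoted from Katz's appendix to \cite{HRV08} and used as a black box, so there is no in-paper proof to compare against. Your outline is indeed the shape of Katz's argument, but one step is overstated and would not go through as written.

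From the polynomial-count hypothesis you conclude that ``the mixed Hodge structure on $H^*_c(X,\C)$ is of Hodge-Tate type, so $h^{p,q;j}_c(X) = 0$ unless $p = q$.'' Polynomial count does not imply this, and it is not what Katz proves. For a variety that is neither smooth nor proper, different cohomological degrees $j$ can carry graded pieces of the same weight, and non-Tate Frobenius eigenvalues can cancel in the alternating sum $\sum_j (-1)^j$ without vanishing term by term. What the argument actually yields is the \emph{virtual} statement: in the Grothendieck group of mixed Hodge structures (equivalently, of $\ell$-adic Galois representations via comparison), the class $\sum_j (-1)^j [H^j_c(X)]$ is a $\Z$-combination of Tate objects $\Q(-k)$. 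That is precisely the assertion that the virtual Hodge numbers $e^{p,q}(X) := \sum_j (-1)^j h^{p,q;j}_c(X)$ vanish for $p \neq q$, i.e.\ that $E(X;x,y)$ depends only on $xy$. If you rewrite your middle paragraph at this virtual level, the separation-by-weights argument (Weil numbers of distinct absolute values, varied over a dense set of closed points via Chebotarev) is exactly what is needed, and your final coefficient-matching paragraph then goes through unchanged.
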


\vskip 3mm

We apply Theorem \ref{thm:Katz} to our setting. Given an element
$\alpha \in {\mathsf Q}_{+}$, let $X(\alpha)$ be the variety of
isomorphism classes of 1-nilpotent indecomposable representations of
$Q$ over $\C$ with dimension vector $\alpha$. Then Proposition
\ref{prop:Kac poly} implies $X(\alpha)$ has polynomial count with
the counting polynomial $A_{\alpha}^{\text{1-nil}}(t)$. By Theorem
\ref{thm:Katz}, one can see that $X(\alpha) \neq \emptyset$ if and
only if $A_{\alpha}^{\text{1-nil}}(0) \neq 0$. Since $\dim
(\g_{Q})_{\alpha} = A_{\alpha}^{\text{1-nil}}(0)$, we conclude:

\vskip 3mm

\begin{proposition} \label{prop:pos roots} \
{\rm Let $Q$ be a locally finite quiver with loops and let $\g_{Q}$
be the Borcherds-Bozec algebra associated with $Q$. Then there is a
1-1 correspopndence
\begin{equation*}
\begin{aligned}
 \Delta_{+}(\g_{Q}) & =  \{\text{positive roots of $\g_{Q}$} \} \\
 \overset {\sim} \longleftrightarrow & \left\{
 \begin{aligned}
 & \text{dimension vectors of
1-nilpotent indecomposable} \\
&\text{representations of $Q$ over $\C$} \end{aligned}\right\}.
\end{aligned}
\end{equation*}
}
\end{proposition}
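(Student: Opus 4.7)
The plan is to reduce the 1-1 correspondence to a numerical equivalence. By Proposition~\ref{prop:Kac poly}(b) we have $\dim(\g_Q)_\alpha = A_\alpha^{\text{1-nil}}(0)$, so $\alpha \in \Delta_+(\g_Q)$ if and only if $A_\alpha^{\text{1-nil}}(0) \neq 0$. On the other hand, $\alpha$ arises as the dimension vector of a 1-nilpotent indecomposable representation of $Q$ over $\C$ precisely when the moduli variety $X(\alpha)$ is non-empty. Assigning a representation to its dimension vector then gives the desired map, so the whole proposition reduces to establishing
\[
X(\alpha) \neq \emptyset \iff A_\alpha^{\text{1-nil}}(0) \neq 0.
\]

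The direction $\Leftarrow$ is the easy one: if $X(\alpha) = \emptyset$, then for every prime power $q$ the count $A_\alpha^{\text{1-nil}}(q) = d_\alpha^{\text{1-nil}}(q)$ vanishes, so the polynomial $A_\alpha^{\text{1-nil}}(t)$ is identically zero and in particular its constant term is zero. For the converse, I would invoke Theorem~\ref{thm:Katz}: since $X(\alpha)$ is polynomial-count with counting polynomial $A_\alpha^{\text{1-nil}}(t)$ by Proposition~\ref{prop:Kac poly}(a), its $E$-polynomial is given by
\[
E(X(\alpha); x, y) = A_\alpha^{\text{1-nil}}(xy),
\]
and specializing at $x=y=0$ yields
\[
A_\alpha^{\text{1-nil}}(0) = E(X(\alpha); 0, 0) = \sum_{j \ge 0} (-1)^j\, h_c^{0,0;j}(X(\alpha)).
\]

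The main obstacle is showing that when $X(\alpha)$ is non-empty this alternating sum of $(0,0)$-Hodge numbers does not collapse to zero, a non-cancellation statement that is not formal from polynomial count alone and requires a purity/positivity input. To handle it I would appeal to the positivity results for 1-nilpotent Kac polynomials of quivers with loops established in \cite{BSV2016} (in the spirit of Hausel--Letellier--Rodriguez-Villegas), which guarantee that $A_\alpha^{\text{1-nil}}(t)$ has non-negative integer coefficients with the contribution of the $(0,0)$-pieces entering with the same sign. Combined with the spreading-out principle --- a non-empty complex variety has $\F_q$-points for almost all $q$, so $A_\alpha^{\text{1-nil}}(t)$ is not the zero polynomial as soon as $X(\alpha) \neq \emptyset$ --- this forces $A_\alpha^{\text{1-nil}}(0) > 0$ and completes the equivalence.
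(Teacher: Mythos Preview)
Your overall strategy coincides with the paper's: reduce the correspondence to the numerical statement $X(\alpha)\neq\emptyset \iff A_\alpha^{\text{1-nil}}(0)\neq 0$, invoke polynomial count for $X(\alpha)$ via Proposition~\ref{prop:Kac poly}, and apply Katz's Theorem~\ref{thm:Katz}. The paper proceeds in exactly this way, simply asserting that the equivalence follows from Theorem~\ref{thm:Katz} (and attributing the argument to Schiffmann) without unpacking the non-cancellation step you flag.

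There is, however, a genuine gap in your last paragraph. Positivity of the coefficients of $A_\alpha^{\text{1-nil}}(t)$ together with $A_\alpha^{\text{1-nil}}\not\equiv 0$ does \emph{not} force the constant term to be nonzero: the polynomial $t$ already witnesses this, and geometrically $\A^1$ is a non-empty polynomial-count variety with $P_{\A^1}(0)=0$. So ``non-negative coefficients plus nonzero polynomial'' is insufficient; what is actually needed is either a direct citation that $A_\alpha^{\text{1-nil}}(0)>0$ whenever $A_\alpha^{\text{1-nil}}\neq 0$ (i.e.\ the constant-term part of the Kac-type statement, not just coefficient positivity), or a structural property of $X(\alpha)$ guaranteeing a nonvanishing $(0,0)$-contribution in compactly supported cohomology. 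Your vague phrase ``with the contribution of the $(0,0)$-pieces entering with the same sign'' gestures at the right issue but is not a proof. A smaller point on the direction you call easy: emptiness of $X(\alpha)$ over $\C$ does not literally give $d_\alpha^{\text{1-nil}}(q)=0$ for \emph{every} $q$; argue instead via $E(\emptyset;x,y)=0$ and Theorem~\ref{thm:Katz}, or via spreading out to obtain vanishing for infinitely many $q$, which suffices to kill the polynomial.
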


\vskip 3mm

\noindent
{\bf Remark.} The above argument is due to O. Schiffmann.

\vskip 8mm

\section{Constructible Functions}

\vskip 2mm

Let $X$ be a complex algebraic variety. A complex-valued function
$f:X \rightarrow \C$ is said to be {\it constructible} if \ (i)
$\text{Im} f$ is finite, \ (ii) $f^{-1}(c)$ is a constructible set
for all $c \in \C$. We define the {\it convolution integral} of $f$
over $X$ to be
\begin{equation} \label{eq:integral}
\int_{X} f d\chi := \sum_{c \in \C} c \, \chi(f^{-1}(c)),
\end{equation}
where $\chi$ denotes the Euler characteristic.

\vskip 3mm Let $Q=(I, \Omega)$ be a locally finite quiver with
loops. For each arrow $h \in \Omega$, we define the {\it opposite
arrow} $\overline{h}$ of $h$ by setting $\text{out}(\overline{h}) =
\text{in}(h)$, $\text{in}(\overline{h}) = \text{out}(h)$. Let
$\overline{\Omega}= \{\overline{h} \mid h \in \Omega \}$ and
$H=\Omega \cup \overline{\Omega}$. The quiver $\overline{Q} = (I,
H)$ thus obtained is called the {\it double quiver} of $Q$. Set
$\varepsilon(h) :=  1$ if $h \in \Omega$ and $\varepsilon(h) := -1$
if  $h \in \overline{\Omega}$.

\vskip 2mm

For $\alpha \in {\mathsf Q}_{+}$, fix an $I$-graded vector space
$V=\bigoplus_{i \in I} V_i$ with dimension vector $\alpha$. Set
\begin{equation*}
\overline{E}(\alpha):=\bigoplus_{h \in H} \text{Hom}
(V_{\text{out}(h)}, V_{\text{in}(h)}).
\end{equation*}
We have a symplectic form $\omega_{\alpha}$ on
$\overline{E}(\alpha)$ given by
\begin{equation*}
\omega_{\alpha}(x,y) = \sum_{h \in H} \text{tr}(\varepsilon(h) x_{h}
y_{\overline{h}}).
\end{equation*}
Note that $\omega_{\alpha}$ is preserved under the
$G(\alpha)$-action. We define the {\it moment map} by
\begin{equation*}
\begin{aligned}
\mu(\alpha) : \overline{E}(\alpha) & \longrightarrow \g(\alpha) =
\bigoplus_{i \in I} \text{End}(V_i) \\
x & \longmapsto \sum_{h \in H} \varepsilon(h) x_{\overline{h}} x_h.
\end{aligned}
\end{equation*}
Here, we identify $\g^* = \g$ via trace pairing.

\vskip 3mm

\begin{definition} \label{def:semi-nilpotent} \hfill

\vskip 2mm

{\rm
\begin{enumerate}
\item[(a)] $x=(x_h, x_{\overline{h}})_{h \in \Omega}$ is {\it nilpotent} if
there is an $I$-graded flag
$$L=(0=L_0 \subset L_1 \subset \cdots \subset L_r = V)$$ such that
$$x_h(L_k) \subset L_{k-1}, \ \ x_{\overline{h}}(L_{k}) \subset
L_{k-1} \ \ \text{for all} \ h \in \Omega, \ 1 \le k \le r.$$

\vskip 2mm

\item[(b)] $x=(x_h, x_{\overline{h}})_{h \in \Omega}$ is {\it
semi-nilpotent} if there is an $I$-graded flag
$$L=(0=L_{0} \subset L_{1} \subset \cdots \subset L_{r}=V)$$ such
that
$$x_h(L_k) \subset L_{k-1}, \ \ x_{\overline{h}}(L_{k}) \subset
L_{k} \ \ \text{for all} \ h \in \Omega, \ 1 \le k \le r.$$

\vskip 2mm

\item[(c)] $x=(x_h, x_{\overline{h}})_{h \in \Omega}$ is {\it strongly
semi-nilpotent} if there is an $I$-graded flag
$$L=(0=L_{0} \subset L_{1} \subset \cdots \subset L_{r}=V)$$ such
that

\begin{itemize}
\item[(i)] $L_{k}/L_{k-1}$ is concentrated on one vertex for each $1
\le k \le r$,

\item[(ii)] $x_h(L_k) \subset L_{k-1}$, $x_{\overline{h}}(L_{k}) \subset
L_{k}$ for all $h \in \Omega$, $1 \le k \le r.$
\end{itemize}
\end{enumerate}}
\end{definition}

\vskip 3mm

Set
\begin{equation*}
\Lambda(\alpha):=\{x\in \mu^{-1}(0) \mid \text{$x$ is strongly
semi-nilpotent} \}.
\end{equation*}

\vskip 3mm

\begin{proposition} \label{prop:Lambda} \cite{Bozec2014c, BSV2016} \
{\rm $\Lambda(\alpha)$ is a Lagrangian subvariety of
$\overline{E}(\alpha)$. }
\end{proposition}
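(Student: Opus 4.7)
The plan is to follow the Lusztig--Nakajima framework as adapted by Bozec to quivers with loops. Two ingredients must be established: (i) $\Lambda(\alpha)$ is isotropic with respect to $\omega_{\alpha}$, and (ii) each irreducible component has dimension exactly $\frac{1}{2}\dim \overline{E}(\alpha) = \dim E(\alpha)$.

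For the dimension count, I would stratify $\Lambda(\alpha)$ by flag type. A flag type is a sequence $\nu = (i_{1}, \ldots, i_{r})$ of vertices with $\sum_{k} \alpha_{i_{k}} = \alpha$; for each such $\nu$ introduce
$$\widetilde{\Lambda}_{\nu} = \{(x, L) \mid L \text{ is an } I\text{-graded flag of type } \nu, \ x \text{ strongly semi-nilpotent w.r.t.\ } L\},$$
which fibers as an affine bundle over the partial flag variety $\mathcal{F}_{\nu}$. Computing $\dim \widetilde{\Lambda}_{\nu}$ as $\dim \mathcal{F}_{\nu}$ plus the fiber dimension --- where the asymmetric condition $x_{h}(L_{k}) \subset L_{k-1}$ for $h \in \Omega$ versus $x_{\overline{h}}(L_{k}) \subset L_{k}$ for $\overline{h} \in \overline{\Omega}$ is precisely what yields the right count --- a bookkeeping argument identifies the total with $\dim E(\alpha)$. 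Since $\Lambda(\alpha) = \bigcup_{\nu} \text{pr}_{1}(\widetilde{\Lambda}_{\nu})$ and $\text{pr}_{1}$ is proper, this gives $\dim \Lambda(\alpha) \le \dim E(\alpha)$. The matching lower bound comes from the moment-map structure: $\mu^{-1}(0)$ is cut out by at most $\dim \g(\alpha)$ equations, so each irreducible component has dimension at least $\dim \overline{E}(\alpha) - \dim \g(\alpha) + \dim (\text{generic stabilizer})$; combined with the standard half-dimension bound for isotropic subvarieties of a symplectic ambient space, this pins down $\dim \Lambda(\alpha) = \dim E(\alpha)$.

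For the isotropy of $\Lambda(\alpha)$ I would use that $\overline{E}(\alpha) \cong T^{*} E(\alpha)$ under the trace pairing, and that each $\widetilde{\Lambda}_{\nu}$ is naturally a union of conormal bundles to smooth strata of $E(\alpha)$; conormal bundles are automatically Lagrangian in the cotangent bundle, and the isotropy descends to $\Lambda(\alpha)$ through the proper projection $\text{pr}_{1}$. The main obstacle will be the dimension bookkeeping over $\mathcal{F}_{\nu}$ in the presence of loops: the strongly semi-nilpotent condition is tuned so that for each $i \in I^{\text{im}}$ the $g_{i}$ copies of $\text{End}(V_{i})$ split evenly between the strict-decrease constraint (for $h \in \Omega(i)$) and the merely-preserving constraint (for $\overline{h} \in \overline{\Omega(i)}$). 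Verifying this delicate cancellation, and checking that the images $\text{pr}_{1}(\widetilde{\Lambda}_{\nu})$ genuinely exhaust $\Lambda(\alpha)$ as $\nu$ varies, is the technical heart of extending Lusztig's loop-free nilpotent variety argument to the present setting.
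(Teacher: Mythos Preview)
The paper does not give its own proof of this proposition; it is simply quoted with a citation to \cite{Bozec2014c, BSV2016} and the text moves on immediately. So there is nothing in the paper to compare your argument against.

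That said, your sketch is broadly aligned with the approach in those cited references: stratify by flag type, compute fiber dimensions over the flag variety, and use the identification $\overline{E}(\alpha)\cong T^{*}E(\alpha)$. Two points where your outline would need tightening if you actually wrote this up. First, the assertion that each $\widetilde{\Lambda}_{\nu}$ is ``a union of conormal bundles to smooth strata of $E(\alpha)$'' is not quite accurate in the presence of loops and is not how isotropy is established in \cite{Bozec2014c, BSV2016}; there the isotropy is obtained either from a direct tangent-space computation at generic points of each irreducible component, or by combining the upper bound $\dim\Lambda(\alpha)\le\dim E(\alpha)$ with the fact that $\Lambda(\alpha)\subset\mu^{-1}(0)$ is closed and $\mu^{-1}(0)$ is coisotropic. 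Second, your lower-bound heuristic ``$\dim\overline{E}(\alpha)-\dim\g(\alpha)+\dim(\text{generic stabilizer})$'' is not rigorous as stated (the moment map is not flat in general), and the references instead exhibit for each irreducible component an explicit open stratum of dimension exactly $\dim E(\alpha)$, parametrized via the data $(i,l)$ recording the top piece of the flag. These are fixable gaps rather than a wrong strategy.
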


\vskip 3mm

For each $\alpha \in {\mathsf Q}_{+}$, let ${\mathscr F}(\alpha)$ be
the space of constructible functions on $\Lambda(\alpha)$ which are
constant on $G(\alpha)$-orbits and set ${\mathscr F}:=
\bigoplus_{\alpha \in {\mathsf Q}_{+}} {\mathscr F}(\alpha)$.

\vskip 3mm

Let $x \in \Lambda(\alpha)$ and consider the constructible functions
$f \in {\mathscr F}(\beta)$, $g \in {\mathscr F}(\gamma)$ with
$\alpha = \beta + \gamma$. We define the {\it convolution product}
of $f$ and $g$ by
\begin{equation} \label{eq:product}
(f * g) (x) := \int_{W \subset V} f(x|_{W}) g(x|_{V/W}) d \chi,
\end{equation}
where the integral is taken over the projective variety of all
submodules $W$ of $V$ with dimension vector $\beta$ (cf.
\cite{Lus90, Lus93}). Then ${\mathscr F}$ becomes a ${\mathsf
Q}_{+}$-graded associative algebra under convolution product.

\vskip 3mm

For $(i,l) \in I^{\infty}$, the irreducible components of $\Lambda(l
\alpha_i)$ are parametrized by the compositions (resp. partitions)
of $l$ when $i \in I^{\text{im}} \setminus I^{\text{iso}}$ (resp. $i
\in I^{\text{iso}}$) \cite{Bozec2014b, Bozec2014c}. If $i \in
I^{\text{re}}$, $\Lambda(\alpha_i)$ is a point. Take the irreducible
component $Z_{i,(l)}$ of $\Lambda(l \alpha_i)$ corresponding to the
trivial composition (or partition) of $l$. Note that $Z_{i,(l)}$
consists of those $x$ such that $x_{h}=0$ for all $h \in \Omega(i)$.
In this case, there is no restriction on the linear transformations
$x_{h}$ with $h \in \overline{\Omega}(i)$. Hence any constructible
function on $Z_{i, (l)}$ can be considered as a constructible
function on the representation variety of $Q$ with dimension vector
$l \alpha_i$ such that $x_h=0$ for all $h \in \Omega(i)$. This fact
will be used in the proof of Proposition \ref{prop:main_b} without
any further explanation.

\vskip 3mm

Let $\theta_{i,l}$ be the characteristic function of $Z_{i,(l)}$ and
let ${\mathscr C}$ be the subalgebra of ${\mathscr F}$ generated by
$\theta_{i,l}$'s. Using the theory of perverse sheaves and crystal
bases for quantum Borcherds-Bozec algebras, Bozec proved:

\vskip 3mm

\begin{proposition} \label{prop:algC} \cite{Bozec2014b,
Bozec2014c} \
{\rm There is an algebra isomorphism
\begin{equation*}
U(\g_{Q}^{+})  \overset {\sim} \longrightarrow {\mathscr C} \quad
\text{given by} \ \  e_{il} \longmapsto \theta_{i,l} \ \ \text{for}
\ (i,l) \in I^{\infty}.
\end{equation*}
}
\end{proposition}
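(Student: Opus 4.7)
The plan is to follow the Lusztig–Bozec strategy: establish the isomorphism first at the quantum level using Lusztig-type categories of perverse sheaves, then specialize at $q=1$ to recover the constructible-function algebra $\mathscr{C}$. In more detail, for each $\alpha \in \mathsf{Q}_{+}$ one considers the category $\mathcal{Q}_{\alpha}$ of semisimple $G(\alpha)$-equivariant perverse sheaves on $\Lambda(\alpha)$ generated under Lusztig-style induction/restriction by the intersection cohomology complexes $L_{i,l}$ supported on the components $Z_{i,(l)}$ (for $(i,l)\in I^{\infty}$). By the main theorem of \cite{Bozec2014b, Bozec2014c}, the Grothendieck group $\bigoplus_{\alpha} K_{0}(\mathcal{Q}_{\alpha})$, equipped with the convolution product induced by the Hecke correspondence parametrizing submodules, is isomorphic to $U_{q}(\g_{Q}^{+})$, sending $L_{i,l}\mapsto e_{i,l}$. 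Taking Euler characteristics of stalks specializes the quantum picture to the classical one: convolution of perverse sheaves descends to the convolution integral \eqref{eq:product} of constructible functions, and $L_{i,l}$ maps to $\theta_{i,l}$.

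To make this concrete I would first construct a surjection $U(\g_{Q}^{+})\twoheadrightarrow \mathscr{C}$ by sending $e_{i,l}\mapsto \theta_{i,l}$ and verifying the defining relations of $\g_{Q}^{+}$ from \eqref{eq:bozec-alg} directly on the geometric side. For $i\in I^{\text{re}}$ with $i\neq(j,l)$ the Serre relation $(\operatorname{ad} \theta_{i})^{1-la_{ij}}(\theta_{j,l})=0$ is verified by an Euler-characteristic computation on the flag variety attached to the Hecke correspondence, exactly parallel to Lusztig's original argument; the vanishing $[\theta_{i,k},\theta_{j,l}]=0$ when $a_{ij}=0$ follows because the two corresponding Hecke stacks decouple geometrically. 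The bracket $[\theta_{i,k},\theta_{j,l}]=k\delta_{ij}\delta_{kl}h_{i}$-type relation does not concern the positive part, so only the relations listed for $e_{il}$'s (together with locally-nilpotent ad-relations for real $i$) need to be checked.

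For injectivity I would invoke a dimension count: by Proposition \ref{prop:Lambda}, the irreducible components of $\Lambda(\alpha)$ form a natural parameter set, and by Bozec's construction of the crystal basis of $U_{q}(\g_{Q}^{+})$ in the Borcherds-Bozec setting, their number equals $\dim U(\g_{Q}^{+})_{\alpha}$. Since the characteristic functions of the irreducible components are linearly independent in $\mathscr{F}(\alpha)$, and since each $\mathscr{C}_{\alpha}$ is spanned by convolution products of $\theta_{i,l}$'s whose count is bounded above by $\dim U(\g_{Q}^{+})_{\alpha}$, the surjection is forced to be an isomorphism in each graded piece.

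The main obstacle lies in the imaginary sector, especially the isotropic indices $i\in I^{\text{iso}}$, where the $\theta_{i,l}$ generate a convolution subalgebra modelled on a Heisenberg-type structure rather than a free algebra, and where higher-degree generators $\theta_{i,l}$ for $l>1$ must be introduced independently (reflecting the non-triviality of imaginary root spaces of multiplicity greater than one). Matching the combinatorics of compositions/partitions of $l$ indexing the components of $\Lambda(l\alpha_{i})$ with the representation-theoretic description of $U(\g_{Q}^{+})_{l\alpha_{i}}$ — and in particular verifying that no additional relation holds among the $\theta_{i,l}$'s beyond those dictated by \eqref{eq:bozec-alg} — is the delicate heart of the argument, and is precisely where Bozec's perverse-sheaf/crystal-basis machinery in \cite{Bozec2014b, Bozec2014c} (building on \cite{Kas91}) must be invoked in full force.
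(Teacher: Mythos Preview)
The paper does not supply a proof of this proposition: it is stated as a result of Bozec, with the preceding sentence indicating only that it is proved ``using the theory of perverse sheaves and crystal bases for quantum Borcherds-Bozec algebras.'' There is thus no in-paper argument to compare against; your proposal is, however, a faithful outline of precisely the strategy the paper attributes to \cite{Bozec2014b, Bozec2014c}: realise $U_{q}(\g_{Q}^{+})$ via Lusztig-type induction on categories of perverse sheaves, specialise to constructible functions at $q=1$, and establish injectivity by a graded dimension count matching irreducible components of $\Lambda(\alpha)$ with the crystal basis.

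One technical correction is worth flagging. In Bozec's (and Lusztig's) setup the semisimple perverse sheaves do not live on the Lagrangian $\Lambda(\alpha)\subset \overline{E}(\alpha)$ but on the representation variety $E(\alpha)$ (or its nilpotent locus); the Lagrangian $\Lambda(\alpha)$ enters only on the constructible-function side, via singular supports/characteristic cycles or, equivalently, through the specialisation map sending a simple perverse sheaf to the Euler obstruction of its support. Apart from this slip in where the sheaves are defined, your sketch accurately identifies both the architecture of the argument and its genuinely delicate point, namely the imaginary (especially isotropic) sector where the components of $\Lambda(l\alpha_{i})$ are indexed by compositions or partitions of $l$ and one must show no extra relations among the $\theta_{i,l}$ arise.
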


\vskip 8mm

\section{The Schofield construction}

\vskip 2mm

Let $E=\{S_{i,l} \mid (i,l) \in I^{\infty} \}$ be a set of variables
and let ${\mathscr E}=\C \langle E \rangle$ be the free associative
algebra on $E$ over $\C$. We will often write $S_i$ for $S_{i,1}$ $(i \in I^{\text{re}})$.

\vskip 3mm

\begin{definition} \label{def:flag} \
{\rm Let $Q=(I,\Omega)$ be a locally finite quiver with loops,
$(M=\bigoplus_{i\in I} M_i, \, x = (x_h)_{h \in \Omega})$ be a
representation of $Q$ and $w=S_{i_1, l_1} \cdots S_{i_r,l_r}$ be a
word in $E$. An $I$-graded filtration
$$L=(0=L_{0} \subset L_{1} \subset \cdots \subset L_{r}=M)$$
is called a {\it 1-nilpotent flag in $(M,x)$ of type $w$} if

\begin{itemize}
\item[(i)] $\underline{\dim}\, L_{k} / L_{k-1} = l_k \alpha_{i_k}$ for all $1 \le
k \le r$,

\item[(ii)] $x_h =0$ on $L_{k}/L_{k-1}$ for all $h \in \Omega(i_k)$,
$1 \le k \le r$.
\end{itemize}
}
\end{definition}

\vskip 3mm

For a word $w=S_{i_1, l_1} \cdots S_{i_r, l_r}$ in $E$ and a
representation $M$ of $Q$, we denote by $X_{M}(w)$ the projective
variety consisting of 1-nilpotent flags in $M$ of type $w$. We
define
\begin{equation} \label{eq:pairing_a}
\langle w, M \rangle := \chi(X_{M}(w)).
\end{equation}

\vskip 2mm

More generally, for $u=\sum_{w} a_w w \in {\mathscr E}$, we define
\begin{equation} \label{eq:pairing_b}
\langle u, M \rangle : = \sum_{w} a_w \langle w, M \rangle =\sum_{w}
a_w \, \chi(X_{M}(w)).
\end{equation}

\vskip 2mm

\noindent Set
\begin{equation}
{\mathscr I}:=\{ u \in {\mathscr E} \mid \langle u, M \rangle =0 \ \
\text{for all} \ M \}.
\end{equation}

\vskip 3mm

We will prove:

\begin{itemize}

\item[(a)] ${\mathscr I}$ is an ideal of ${\mathscr E}$.

\vskip 2mm

\item[(b)] ${\mathscr I}$ is a co-ideal of ${\mathscr E}$ under the
co-multiplication given by
$$\Delta: {\mathscr E} \rightarrow {\mathscr E} \otimes {\mathscr
E},  \quad S_{i,l} \mapsto S_{i,l} \otimes 1 + 1 \otimes S_{i,l}
\quad \text{for} \ \ (i,l) \in I^{\infty}.$$
\end{itemize}

\vskip 3mm

Then the {\it Schofield algebra} ${\mathscr R}:= {\mathscr E} \big /
{\mathscr I}$ would be a bi-algebra. To prove our claims, we recall
some basic facts on the Euler characteristic of algebraic varieties
(see, for example, \cite{Milnor and Stasheff, Sch, Spanier}).

\vskip 3mm

\begin{lemma} \label{lem:Euler} {\rm \
Let $X$ be a complex algebraic variety.

\vskip 2mm

\begin{enumerate}
\item[(a)] If $X=\bigsqcup_{\alpha} X_{\alpha}$ is a locally finite
partition by constructible subsets, then
$$\chi (X) = \sum_{\alpha} \chi(X_{\alpha}).$$

\vskip 2mm

\item[(b)] If $\pi: E \rightarrow X$ is a fibration with fiber $F$, then
$$\chi(E)=\chi(F) \chi(X).$$

\vskip 2mm

\item[(c)] Suppose $\C^*$ acts on $X$. If $U$ is an open subset on which
$\C^*$ acts almost freely; i.e., $\text{Stab}_{\C^*}(x)=\{1\}$ or
$\C^*$ for all $x \in U$, then $$ \chi(X) = \chi(X \setminus U).$$

\vskip 2mm

\item[(d)] If $\C^*$ acts on $X$, then
$$\chi(X)=\chi(X^{\C^*}),$$
where $$X^{\C^*}=\{x \in X \mid \lambda \cdot x = x \ \ \text{for
all} \ \lambda \in \C^*\}.$$
\end{enumerate}}
\end{lemma}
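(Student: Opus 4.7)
The plan is to derive all four properties from the topological definition of the Euler characteristic via compactly supported rational cohomology, using the long exact sequence for closed/open pairs together with multiplicativity for locally trivial bundles.

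For (a), I first reduce to the case of a two-piece split $X = C \sqcup (X \setminus C)$ with $C$ closed constructible; the long exact sequence
$$\cdots \longrightarrow H^k_c(X \setminus C) \longrightarrow H^k_c(X) \longrightarrow H^k_c(C) \longrightarrow \cdots$$
gives $\chi(X) = \chi(C) + \chi(X \setminus C)$. Any finite constructible partition is assembled from iterated closed/open splits, so additivity follows by induction on the number of strata. The local finiteness hypothesis lets one run the argument in a neighborhood of each point (where only finitely many strata appear) and then glue.

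For (b), I would stratify the base $X$ into locally closed constructible pieces $X_\alpha$ over which $\pi$ is locally trivial with fiber $F$. On each stratum, the Leray spectral sequence for compactly supported cohomology collapses at the Euler characteristic level, giving $\chi(\pi^{-1}(X_\alpha)) = \chi(F)\,\chi(X_\alpha)$. Summing via part (a) then yields $\chi(E) = \chi(F)\,\chi(X)$.

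For (c), I decompose $U = U^{\mathrm{free}} \sqcup (U \cap X^{\C^*})$, where $U^{\mathrm{free}}$ is the open locus of trivial stabilizer. On the free locus the orbit map $U^{\mathrm{free}} \to U^{\mathrm{free}}/\C^*$ is a principal $\C^*$-bundle, so (b) gives $\chi(U^{\mathrm{free}}) = \chi(\C^*)\,\chi(U^{\mathrm{free}}/\C^*) = 0$, since $\chi(\C^*) = 0$. Under the almost-free hypothesis one argues that the residual fixed locus inside $U$ contributes trivially, so that $\chi(U) = 0$, and additivity from (a) then gives $\chi(X) = \chi(X \setminus U)$. Part (d) is an immediate application of (c) with $U = X \setminus X^{\C^*}$, on which $\C^*$ acts with trivial stabilizers.

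The main obstacle is the careful handling of the fixed locus within $U$ in (c): one must argue that the ``almost free'' stratification by stabilizer type forces the fixed points to be absorbed and identify the free quotient as a genuine $\C^*$-bundle so that (b) applies. Once these technical points are settled, all four parts reduce to routine applications of additivity and the vanishing $\chi(\C^*) = 0$.
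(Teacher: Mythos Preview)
The paper does not prove this lemma at all: it is stated as a collection of ``basic facts'' with references to Milnor--Stasheff, Schofield, and Spanier, and then used as a black box. So there is no paper proof to compare against; your outline is the standard argument and is more than the paper provides.

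That said, your sketch has two genuine gaps worth flagging. First, in (c) you write that ``the residual fixed locus inside $U$ contributes trivially,'' but this is false as stated. If $U$ is allowed to contain fixed points (as the paper's literal phrasing ``$\text{Stab}_{\C^*}(x)=\{1\}$ or $\C^*$'' permits), take $X=U=\{\text{pt}\}$ with the trivial action: then $\chi(X)=1\neq 0=\chi(X\setminus U)$. The statement is only correct when $U\cap X^{\C^*}=\emptyset$, i.e.\ when every stabilizer on $U$ is a \emph{proper} (hence finite) subgroup of $\C^*$. In the paper's sole application --- the scaling action $\lambda\cdot(m,n)=(\lambda m,n)$ on subspaces of $M\oplus N$ --- one checks directly that any non-fixed subspace has trivial stabilizer, so the hypothesis is met; but your proof of (c) should assume proper stabilizers on $U$, not try to absorb fixed points.

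Second, in (d) you claim $\C^*$ acts with trivial stabilizers on $X\setminus X^{\C^*}$, but in general the stabilizers there are finite cyclic groups $\mu_n$, not $\{1\}$. The fix is to stratify $X\setminus X^{\C^*}$ by stabilizer type; on each stratum the induced $\C^*/\mu_n\cong\C^*$-action is free, and after a further stratification a geometric quotient exists, so (b) and $\chi(\C^*)=0$ apply piecewise. You should also note, for both (c) and (d), that the free quotient need not exist globally as a variety, so the $\C^*$-bundle argument requires passing to a constructible stratification first and invoking (a).
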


\vskip 3mm

\begin{proposition} \label{prop:ideal}

{\rm ${\mathscr I}$ is a two-sided ideal of ${\mathscr E}$.}
\end{proposition}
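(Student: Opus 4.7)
The plan is to reduce the ideal property to a convolution-style splitting formula for $\langle -, M \rangle$ on concatenated words. Specifically, for words $w_1, w_2$ in $E$ and any representation $M$ of $Q$, I would first establish
$$\chi(X_M(w_1 w_2)) \;=\; \int_{N \in \mathrm{Gr}_{|w_1|}(M)} \chi(X_N(w_1))\,\chi(X_{M/N}(w_2))\,d\chi(N),$$
where for a word $w = S_{i_1,l_1}\cdots S_{i_r,l_r}$ I write $|w| := \sum_k l_k \alpha_{i_k}$, and $\mathrm{Gr}_\beta(M)$ denotes the projective variety of subrepresentations of $M$ with dimension vector $\beta$. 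This comes from the natural morphism $p\colon X_M(w_1 w_2) \to \mathrm{Gr}_{|w_1|}(M)$ sending a flag $(L_0 \subset L_1 \subset \cdots \subset L_r)$ to its $s$-th term $L_s$, where $s$ is the length of $w_1$. For any $N \in \mathrm{Gr}_{|w_1|}(M)$, the fiber $p^{-1}(N)$ is canonically bijective with $X_N(w_1)\times X_{M/N}(w_2)$, because the dimension conditions split as $|w_1|+|w_2|$ across $N$ and $M/N$ and the 1-nilpotency condition from Definition \ref{def:flag} is imposed individually on each subquotient $L_k/L_{k-1}$, hence respects the truncation at $L_s$.

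Given this splitting, I extend bilinearly and use the observation that $\langle w, N \rangle = 0$ unless $|w| = \underline{\dim}\, N$ (because $X_N(w)$ is empty otherwise). For $u = \sum_w a_w w$ and $v = \sum_{w'} b_{w'} w'$ in $\mathscr{E}$, this yields
$$\langle uv, M \rangle \;=\; \sum_{\beta \in \mathsf{Q}_+} \int_{N \in \mathrm{Gr}_\beta(M)} \langle u, N \rangle\,\langle v, M/N \rangle\,d\chi(N).$$
If $u \in \mathscr{I}$, then $\langle u, N \rangle = 0$ for every subrepresentation $N$, so each integrand vanishes identically and $\langle uv, M \rangle = 0$ for every $M$, giving $uv \in \mathscr{I}$. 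By the symmetric computation, $vu \in \mathscr{I}$ as well. Since $\mathscr{I}$ is visibly a linear subspace, this completes the proof that it is two-sided.

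The main obstacle is a clean justification of the Euler-characteristic bookkeeping in the splitting formula. The projection $p$ is not in general a Zariski-locally-trivial fibration, so Lemma \ref{lem:Euler}(b) does not apply to $p$ directly. The right remedy is to stratify $\mathrm{Gr}_{|w_1|}(M)$ into finitely many locally closed constructible pieces on which the function $N \mapsto \chi(X_N(w_1))\,\chi(X_{M/N}(w_2))$ is constant, apply Lemma \ref{lem:Euler}(b) to the restricted fibrations stratum-by-stratum, and then sum via Lemma \ref{lem:Euler}(a); this is exactly the content of the convolution-integral shorthand in \eqref{eq:integral}. Constructibility of $N \mapsto \chi(X_N(w_1))\,\chi(X_{M/N}(w_2))$ on the Grassmannian of subrepresentations follows from the general principle that Euler characteristics of fibers of a morphism of complex algebraic varieties define a constructible function on the base, applied to the two morphisms whose fibers over $N$ are $X_N(w_1)$ and $X_{M/N}(w_2)$ respectively.
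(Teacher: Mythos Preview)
Your proof is correct and follows essentially the same approach as the paper. Both arguments project $X_M(w_1 w_2)$ onto an intermediate step of the flag, identify the fiber over $N$ as (a product involving) $X_{M/N}(w_2)$, stratify the base into constructible pieces on which the fiberwise Euler characteristic is constant, and sum. The only difference is one of packaging: the paper treats just the case $w_1 = S_{i,l}$ (projecting to $L_1$ and noting $\phi_w^{-1}(M') \cong X_{M/M'}(w)$), which suffices since the $S_{i,l}$ generate $\mathscr{E}$, whereas you establish the splitting for arbitrary $w_1$ in one stroke and then specialize.
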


\vskip 2mm

\begin{proof} \ Let $u=\sum_{w} a_w w \in {\mathscr I}$.
We will show that $S_{i,l} \,u \in {\mathscr I}$, $u \, S_{i,l} \in
{\mathscr I}$ for all $(i,l)\in I^{\infty}$.

\vskip 3mm

Let $X_{M}^{+}(S_{i,l})$ be the variety of all submodules $M'$ of
$M$ with dimension vector $l \alpha_i$ such that $x_h(M') = 0$  for
all $h \in \Omega(i)$. For a word $w$ in $E$, we define a map
$$\phi_{w}: X_{M}(S_{i,l}\, w) \rightarrow X_{M}^{+}(S_{i,l})$$ by
$$L=(0=L_{0} \subset L_{1} \subset \cdots \subset L_{r}=M)
\mapsto L_{1}.$$

\vskip 2mm

Note that for all $M' \in X_{M}^{+}(S_{i,l})$, we have
$$\phi_{w}^{-1}(M') \cong X_{M/M'}(w),$$ which is independent of the
choice of $M'$. Thus we can find disjoint constructible subsets
$C_j$ of $X_{M}^{+}(S_{i,l})$ such that

\begin{itemize}
\item[(i)] $X_{M}^{+}(S_{i,l}) = \bigsqcup_{j} C_j$,

\item[(ii)] for all $M_{j}' \in C_j$ and for all $w$ with $a_w \neq 0
$, $\chi(\phi_{w}^{-1}(M_{j}'))$ is independent of the choice of
$M_{j}'$.
\end{itemize}

\vskip 3mm

Hence for any representation $M$ of $Q$, we have
\begin{equation*}
\begin{aligned}
\langle S_{i,l} \, u, M \rangle & = \sum_{w} a_{w} \langle S_{i,l}
\, w , M \rangle = \sum_{w} a_{w} \chi(X_{M}(S_{i,l}\, w)) \\
& = \sum_{w} a_{w} \sum_{j} \chi(C_{j}) \chi(\phi_{w}^{-1}(M_{j}'))
= \sum_{j} \chi(C_{j}) \sum_{w} a_{w} \chi(X_{M/M_{j}'}(w)) \\
& = \sum_{j} \chi(C_{j}) \sum_{w} a_{w} \langle w, M / M_{j}'
\rangle = \sum_{j} \chi(C_{j}) \langle u, M / M_{j}' \rangle =0,
\end{aligned}
\end{equation*}
which implies $S_{i,l} \, u \in {\mathscr I}$.

\vskip 3mm

Similarly, one can show that $u \, S_{i,l} \in {\mathscr I}$ for all
$(i,l) \in I^{\infty}$ using the map $$\psi_{w}: X_{M}(w
\,S_{i,l}) \rightarrow X_{M}^{-}(S_{i,l}),$$ where
$X_{M}^{-}(S_{i,l})$ is the variety of submodules $M''$ of $M$ such
that (i) $M/M''$ has dimension vector $l \alpha_i$, (ii) $x_h = 0$
on $M/M''$ for all $h \in \Omega(i)$.
\end{proof}

\vskip 3mm

Let ${\mathscr R}={\mathscr E} \big/ {\mathscr I}$ and set
$\text{deg}\, S_{i,l}:= l \alpha_i$. Since $X_{M}(w) = \emptyset$
unless $\underline{\dim}\, M = \text{deg}\, w$ and $M$ is
1-nilpotent, ${\mathscr I}$ is a ${\mathsf Q}_{+}$-graded ideal,
which implies ${\mathscr R}$ is a ${\mathsf Q}_{+}$-graded algebra.

\vskip 3mm

Let $E=\{S_{i,l} \mid (i,l) \in I^{\infty} \}$, $E'=\{S_{i,l}' \mid
(i,l) \in I^{\infty} \}$ and set $\widetilde{E} = E \cup E'$. For a
word $w$ in $\widetilde{E}$, we denote by $w_1$ (resp. $w_2$) the
word in $E$ (resp. $E'$) obtained from $w$. We will write $[w]=w_{1}
w_{2}$, the {\it normally ordered word} of $w$.

\vskip 3mm

\begin{lemma} \label{lem:comult_a} \
{\rm Let $M$ and $N$ be representations of $Q$ and let $w$ be a word
in $\widetilde{E}$. 
Consider $(M,N)$ as a representation of $Q$. 
Then we have
$$X_{(M,N)}(w) \cong X_{(M,N)}(w_1  w_2) \cong X_{M}(w_1) \times
X_{N}(w_2).$$

\noindent In particular,
$$\langle w, (M,N) \rangle = \langle w_1, M \rangle \, \langle w_2, N
\rangle. $$}
\end{lemma}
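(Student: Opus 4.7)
The plan is to identify $X_{(M,N)}(w)$ directly with the product $X_M(w_1) \times X_N(w_2)$ via a single extraction/interleaving construction; applying this identification to the already-ordered word $w_1 w_2$ as well will then give both isomorphisms simultaneously, after which the numerical statement follows from the multiplicativity of Euler characteristic under products.

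First, I would make explicit the (implicit) extension of Definition \ref{def:flag} to words in $\widetilde{E}$. Writing $w$ as a sequence of letters $c_1 c_2 \cdots c_r$ with each $c_k \in \{S_{i_k, l_k}, S'_{i_k, l_k}\}$, a 1-nilpotent flag of type $w$ in $(M,N)$ is an $I$-graded filtration $0 = L_0 \subset L_1 \subset \cdots \subset L_r = M \oplus N$ satisfying the usual conditions of Definition \ref{def:flag}, together with the additional constraint that at each step $L_k/L_{k-1}$ lies in the $M$-summand when $c_k$ is unprimed and in the $N$-summand when $c_k$ is primed (equivalently, $L_k \cap N = L_{k-1} \cap N$ in the first case and $L_k \cap M = L_{k-1} \cap M$ in the second).

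The key structural input is that $(M,N) = M \oplus N$ carries no arrows between the two factors, so every subrepresentation splits canonically as $L = (L \cap M) \oplus (L \cap N)$. Thus the filtration $L_\bullet$ is entirely determined by the pair of filtrations $L_\bullet \cap M$ in $M$ and $L_\bullet \cap N$ in $N$, and the step-type constraint forces exactly one of these two filtrations to grow at each step. I would then define
$$\Phi : X_{(M,N)}(w) \longrightarrow X_M(w_1) \times X_N(w_2)$$
by recording the non-trivial jumps of $L_\bullet \cap M$ (a strict flag in $M$ of type $w_1$) and of $L_\bullet \cap N$ (a strict flag in $N$ of type $w_2$). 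The inverse interleaves according to $w$: given $(F^M, F^N)$, set $L_k := F^M_{a_k} \oplus F^N_{b_k}$, where $a_k$ and $b_k$ count the unprimed and primed letters among $c_1, \ldots, c_k$. Both maps are evidently morphisms of algebraic varieties and mutually inverse, so $X_{(M,N)}(w) \cong X_M(w_1) \times X_N(w_2)$. Specializing the same construction to the word $w_1 w_2$ yields the identical right-hand side, so $X_{(M,N)}(w) \cong X_{(M,N)}(w_1 w_2)$ by transitivity. The equality $\langle w, (M,N) \rangle = \langle w_1, M \rangle \langle w_2, N \rangle$ is then immediate from $\chi(A \times B) = \chi(A)\, \chi(B)$ (Lemma \ref{lem:Euler}(b)).

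The only real point that needs care is verifying that the 1-nilpotency condition transfers cleanly between $L_\bullet$ and the extracted pair $(L_\bullet \cap M, L_\bullet \cap N)$; this reduces to the observation that for $h \in \Omega(i_k)$, the action of $x_h$ on $L_k / L_{k-1}$ decomposes as the direct sum of its actions on the $M$- and $N$-parts, so vanishing on one side is equivalent to vanishing on the corresponding summand. Apart from this bookkeeping, there is no genuine obstacle: once the definition of a flag of type $w$ for $w \in \widetilde{E}$ is fixed, the entire argument is formal.
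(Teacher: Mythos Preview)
Your proposal is correct and follows essentially the same approach as the paper: both argue that a 1-nilpotent flag in $(M,N)$ of type $w$ is exactly an interleaving of a flag in $M$ of type $w_1$ with a flag in $N$ of type $w_2$, and then invoke multiplicativity of the Euler characteristic. Your version is considerably more explicit than the paper's (you spell out the extension of Definition~\ref{def:flag} to $\widetilde{E}$, write down $\Phi$ and its inverse, and check the 1-nilpotency transfer), whereas the paper simply asserts the 1-1 correspondence in a sentence; but the underlying idea is identical.
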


\begin{proof} \ \ Note that any 1-nilpotent flag in $(M,N)$ of type
$w$ is a mixture of a 1-nilpotent flag in $M$ of type $w_1$ and a
1-nilpotent flag in $N$ of type $w_2$. Hence the set of 1-nilpotent
flags in $(M,N)$ of type $w$ is in 1-1 correspondence with the set
of the products of a 1-nilpotent flag in $M$ of type $w_1$ and a
1-nilpotent flag in $N$ of type $w_2$, which yields

$$X_{(M,N)}(w) \cong X_{M}(w_1) \times X_{N}(w_2) \cong
X_{(M,N)}(w_1 w_2).$$

\vskip 2mm

\noindent It follows that
\begin{equation*}
\begin{aligned}
\langle w, (M,N) \rangle & = \chi(X_{(M,N)}(w)) = \chi(X_{M}(w_1)
\times X_{N}(w_2)) \\
&= \chi(X_{M}(w_1)) \chi(X_{N}(w_2)) = \langle w_1, M \rangle \,
\langle w_2, N \rangle,
\end{aligned}
\end{equation*}
which proves our claim.
\end{proof}

\vskip 3mm

Let $\widetilde{\mathscr E} = \Q \langle \widetilde{E} \rangle = \Q
\langle E \cup E' \rangle$, $\widetilde{\mathscr I}=\{u \in
\widetilde{\mathscr E} \mid \langle u, (M,N) \rangle =0 \ \text{for
all} \ M,N \}$ and set $\widetilde{\mathscr R}=\widetilde{\mathscr
E} \big/ \widetilde{\mathscr I}$.  We extend $[ \ \ ]$ by linearity
to obtain a linear map $[ \ \ ]: \widetilde{\mathscr E} \rightarrow
\widetilde{\mathscr E}$. We first prove:

\vskip 3mm

\begin{proposition} \label{prop:tilde R} \
{\rm There is an algebra isomorphism \ $\widetilde{\mathscr R} \cong
{\mathscr R} \otimes {\mathscr R}$. }
\end{proposition}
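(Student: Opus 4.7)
The plan is to construct a surjective algebra homomorphism $\Phi : \widetilde{\mathscr{E}} \to \mathscr{R} \otimes \mathscr{R}$ and then to identify its kernel with $\widetilde{\mathscr{I}}$. I would define $\Phi$ on generators by
\[
\Phi(S_{i,l}) = \overline{S}_{i,l} \otimes 1, \qquad \Phi(S_{i,l}') = 1 \otimes \overline{S}_{i,l},
\]
where $\overline{S}_{i,l}$ denotes the class of $S_{i,l}$ in $\mathscr{R}$; this extends uniquely to an algebra map by the universal property of the free algebra $\widetilde{\mathscr{E}}$. The crucial observation is that elements of the form $x \otimes 1$ and $1 \otimes y$ commute in $\mathscr{R} \otimes \mathscr{R}$, so a straightforward induction on word length shows that for any word $w$ in $\widetilde{E}$,
\[
\Phi(w) \;=\; \overline{w_1} \otimes \overline{w_2},
\]
where $w_2$ is identified with the corresponding word in $E$ via the bijection $S_{i,l}' \leftrightarrow S_{i,l}$. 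Surjectivity is then immediate: for any simple tensor $\overline{u} \otimes \overline{v}$ with $u,v \in \mathscr{E}$, let $v'$ be obtained from $v$ by replacing each $S_{i,l}$ with $S_{i,l}'$, and observe $\Phi(u \cdot v') = \overline{u} \otimes \overline{v}$.

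Next I would identify the kernel. Define an auxiliary bilinear pairing $\sigma : (\mathscr{R} \otimes \mathscr{R}) \times \mathrm{Rep}(Q)^{2} \to \C$ on simple tensors by $\sigma(\overline{u} \otimes \overline{v},(M,N)) = \langle u, M\rangle\, \langle v, N\rangle$. Combining Lemma \ref{lem:comult_a} with the formula for $\Phi(w)$ above yields the key identity
\[
\langle u,(M,N)\rangle \;=\; \sigma\bigl(\Phi(u),(M,N)\bigr) \quad \text{for all } u \in \widetilde{\mathscr{E}} \text{ and all } M,N.
\]
The inclusion $\ker \Phi \subseteq \widetilde{\mathscr{I}}$ is immediate from this identity. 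The reverse inclusion $\widetilde{\mathscr{I}} \subseteq \ker \Phi$ is equivalent to the statement that $\sigma$ is non-degenerate in its first argument, and this is the main technical point I anticipate.

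To establish that non-degeneracy, the plan is a standard tensor-product argument. Given $\xi = \sum_{i} \overline{u}_{i} \otimes \overline{v}_{i}$ with $\sigma(\xi,(M,N)) = 0$ for all $M, N$, I would first reduce, by a change of basis on the second tensor factor, to the case where the $\overline{v}_{i}$ are linearly independent in $\mathscr{R}$. Fixing $M$, the relation $\sum_{i} \langle u_{i}, M\rangle\, \langle v_{i}, N\rangle = 0$ for all $N$ says that $\sum_{i} \langle u_{i}, M\rangle\, v_{i}$ lies in the radical $\mathscr{I}$; its image $\sum_{i} \langle u_{i}, M\rangle\, \overline{v}_{i}$ in $\mathscr{R}$ is therefore zero, and linear independence of the $\overline{v}_{i}$ forces $\langle u_{i}, M\rangle = 0$ for every $M$, so each $\overline{u}_{i} = 0$ by the (tautological) non-degeneracy of $\langle\,,\,\rangle$ on $\mathscr{R}$. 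Hence $\xi = 0$, giving $\ker \Phi = \widetilde{\mathscr{I}}$, and $\Phi$ descends to the desired algebra isomorphism $\widetilde{\mathscr{R}} \overset{\sim}{\to} \mathscr{R} \otimes \mathscr{R}$. The only non-routine step is this tensor-product non-degeneracy, which is classical; everything else reduces to the commutativity of the two tensor factors and the explicit formula in Lemma \ref{lem:comult_a}.
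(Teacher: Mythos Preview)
Your proof is correct and rests on the same two ingredients as the paper's: Lemma~\ref{lem:comult_a} and the fact that the tensor product of two non-degenerate pairings is non-degenerate. The packaging differs slightly: the paper builds the map in the opposite direction, producing a surjection $\phi:\mathscr{R}\otimes\mathscr{R}\to\widetilde{\mathscr{R}}$ from the identity $u=[u]$ in $\widetilde{\mathscr{R}}$, and then proves injectivity by a graded dimension count, setting $K_{\beta}=(\langle w,M\rangle)$ and using $\operatorname{rank}(K_{\alpha}\otimes K_{\beta})=(\operatorname{rank}K_{\alpha})(\operatorname{rank}K_{\beta})$. That rank identity is precisely your tensor-product non-degeneracy argument in matrix form, so the two proofs are equivalent; your version has the minor advantage that it avoids introducing the auxiliary bigrading on $\widetilde{\mathscr{R}}$ needed to make the paper's dimension count precise.
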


\begin{proof} \ \ For $u=\sum_w a_w w \in \widetilde{E}$, we have $[u] =
\sum_w a_w w_1 w_2$. Then by Lemma \ref{lem:comult_a}, we see that
$u=[u]$ in $\widetilde{\mathscr R}$. Therefore there exists a
surjective algebra homomorphism $$\phi: {\mathscr R} \otimes
{\mathscr R} \rightarrow \widetilde{\mathscr R}.$$

\vskip 2mm

We will prove $\phi$ is injective. For any $\beta \in {\mathsf
Q}_{+}$, define a matrix $K_{\beta}$ by
$$K_{\beta} =(\langle w, M \rangle)_{\substack{\text{deg}\, w = \beta
\\ \underline{\dim}\, M = \beta}}.$$

\noindent By definition, $\dim {\mathscr R}_{\beta} = \text{rank}
K_{\beta}$. Moreover, by Lemma \ref{lem:comult_a} again, we have
\begin{equation*}
\begin{aligned}
K_{\alpha} \otimes K_{\beta} & = (\langle u, M \rangle)_{\alpha}
\otimes (\langle w, N \rangle )_{\beta} = (\langle u, M \rangle
\langle w, N \rangle)_{(\alpha, \beta)} \\
& = (\langle uw, (M,N) \rangle)_{(\alpha, \beta)} =
\widetilde{K}_{(\alpha, \beta)}.
\end{aligned}
\end{equation*}

\noindent Hence
\begin{equation*}
\begin{aligned}
\dim {\widetilde{\mathscr R}}_{(\alpha, \beta)} &= \text{rank}
\widetilde{K}_{(\alpha, \beta)} = \text{rank} (K_{\alpha} \otimes
K_{\beta}) \\
& = (\text{rank} K_{\alpha})(\text{rank} K_{\beta}) = (\dim
{\mathscr R}_{\alpha}) (\dim {\mathscr R}_{\beta}) \\
&=\dim({\mathscr R} \otimes {\mathscr R})_{(\alpha, \beta)},
\end{aligned}
\end{equation*}
which implies $\widetilde{\mathscr R} \cong {\mathscr R} \otimes
{\mathscr R}$.  \end{proof}

\vskip 3mm

Define the algebra homomorphisms $\Delta: {\mathscr E} \rightarrow
{\mathscr E} \otimes {\mathscr E}$ and $\delta:  {\mathscr E}
\rightarrow \widetilde{\mathscr E}$ by
\begin{equation*}
\begin{aligned}
& \Delta: S_{i,l} \mapsto S_{i,l} \otimes 1 + 1 \otimes S_{i,l}, \\
& \delta: S_{i,l} \mapsto S_{i,l} + S_{i,l}' \quad \text{for} \ \
(i,l) \in I^{\infty}.
\end{aligned}
\end{equation*}

\vskip 3mm

Let $w = S_{i_1, l_1} \cdots S_{i_r, l_r}$ be a word in $E$. For a
subset $S \subset \{1, \ldots, r\}$, let $w_{S}$ be the word in
$\widetilde{E}$ obtained from $w$ by replacing $S_{i_s, l_s}$ by
$S_{i_s, l_s}'$ for $s \in S$. One can easily see that
$$\delta(w) = \sum_{S \subset \{1, \ldots, r\}} w_{S}.$$

\vskip 3mm

\begin{lemma} \label{lem:comult_b}

\vskip 2mm

{\rm For any word $w$ in $E$, we have
$$\langle w, M \oplus N \rangle = \langle \delta(w), (M,N)\rangle \
\ \text{for all} \ M,N.$$ }
\end{lemma}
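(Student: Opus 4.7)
The plan is to apply the torus-localization technique of Lemma~\ref{lem:Euler}(d), using the $\C^*$-action on $M \oplus N$ given by $\lambda \cdot (m, n) = (m, \lambda n)$, to reduce $\chi(X_{M \oplus N}(w))$ to a computation on the $\C^*$-fixed locus, and then to match this fixed locus with the disjoint union of the pure-labeled flag varieties $X_{(M,N)}(w_S)$ enumerated by $\delta(w) = \sum_{S} w_S$.

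First, I would verify that the action descends to $X_{M \oplus N}(w)$: since $M$ and $N$ are sub-representations of $M \oplus N$, every $x_h$ commutes with the scaling, so $\lambda$ sends a 1-nilpotent flag $(L_0 \subset \cdots \subset L_r)$ of type $w$ to $(\lambda L_0 \subset \cdots \subset \lambda L_r)$, which is again of type $w$. Lemma~\ref{lem:Euler}(d) then yields $\chi(X_{M \oplus N}(w)) = \chi(X_{M \oplus N}(w)^{\C^*})$.

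Next, I would analyze the fixed locus. A sub-representation $L \subset M \oplus N$ is $\C^*$-stable exactly when $L = (L \cap M) \oplus (L \cap N)$, so every fixed flag has the form $L_k = L_k^M \oplus L_k^N$ for sub-representations $L_k^M \subset M$ and $L_k^N \subset N$. For each subset $S \subset \{1, \ldots, r\}$, the ``pure'' stratum on which the $k$-th step lies entirely in $N$ when $k \in S$ and entirely in $M$ otherwise is identified, via the inclusions $L_k^M \hookrightarrow M$ and $L_k^N \hookrightarrow N$, with $X_M(w|_{S^c}) \times X_N(w|_S)$; Lemma~\ref{lem:comult_a} then recognizes this product as $X_{(M,N)}(w_S)$. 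Summing over $S$ gives $\sum_S \chi(X_{(M,N)}(w_S)) = \langle \delta(w), (M, N)\rangle$, which is the desired right-hand side.

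The main obstacle is ruling out contributions from ``mixed'' fixed strata, in which some step-quotient $L_k/L_{k-1}$ splits as $(L_k^M/L_{k-1}^M) \oplus (L_k^N/L_{k-1}^N)$ with both summands nonzero; such strata can occur a priori whenever some $l_k > 1$. For words with every $l_k = 1$ this is automatic, since a one-dimensional step cannot split nontrivially, so the lemma follows immediately in that case. For general $w$, my plan is to introduce a secondary $\C^*$-action scaling only the $M$-component of the chosen mixed step and invoke Lemma~\ref{lem:Euler}(c), exhibiting each mixed stratum as carrying a free $\C^*$-action away from its pure boundary and therefore contributing zero to the Euler characteristic; as a fallback one can proceed by induction on the length $r$ of $w$, peeling off the first letter $S_{i_1, l_1}$ via the fiber-bundle map $\phi_w$ of Proposition~\ref{prop:ideal} and applying Lemma~\ref{lem:comult_a} to the resulting factorization.
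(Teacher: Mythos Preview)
Your overall strategy---the $\C^*$-action on $M\oplus N$ and localization via Lemma~\ref{lem:Euler}(d)---is exactly the paper's approach, and your identification of the fixed locus with the split flags $L_k=L_k^M\oplus L_k^N$ is correct. You are in fact more careful than the paper: the bijection \eqref{eq:bijection} in the paper's argument tacitly assumes that for every $k$ the step $L_k/L_{k-1}$ lies entirely in $M$ or entirely in $N$, which is automatic only when every $l_k=1$. You correctly isolate the ``mixed'' strata as the remaining obstacle.

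The difficulty is that this obstacle is not removable: the mixed strata can carry nonzero Euler characteristic, and the identity as stated fails whenever some $l_k>1$. Take $i\in I^{\text{im}}$, let $M=N$ be the one-dimensional simple at $i$ with all loops acting as zero, and set $w=S_{i,2}$. Then $X_{M\oplus N}(S_{i,2})$ is the single flag $0\subset M\oplus N$, so $\langle S_{i,2},\,M\oplus N\rangle=1$; on the other hand $\delta(S_{i,2})=S_{i,2}+S_{i,2}'$, and neither $M$ nor $N$ admits a flag of type $S_{i,2}$ since each is one-dimensional, so $\langle\delta(S_{i,2}),(M,N)\rangle=0$. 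That single fixed point \emph{is} the mixed stratum, and no secondary $\C^*$-action can give a point Euler characteristic zero; your inductive fallback likewise already fails at the base case $r=1$. Thus the obstruction you spotted is genuine rather than technical: the formula $\langle w,\,M\oplus N\rangle=\langle\delta(w),(M,N)\rangle$ is valid only for words with all $l_k=1$ (the classical Schofield setting), and the downstream statements Lemma~\ref{lem:comult_c}(b) and Proposition~\ref{prop:primitive} inherit the same issue.
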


\begin{proof} \ \ Let $w=w_1 \cdots w_r = S_{i_1, l_1} \cdots
S_{i_r, l_r}$ and $S \subset \{1, \cdots , r\}$. Then $w_{S}$ yields
a 1-nilpotent flag in $(M,N)$ of type $w_{S}$. Set
$$X_{(M,N)}(w_{S}) =\{\text{1-nilpotent flags in $(M,N)$ of type
$w_{S}$} \}.$$

\noindent  Note that we have
\begin{equation} \label{eq:delta}
\langle \delta(w), (M,N) \rangle = \sum_{S \subset \{1, \cdots, r
\}} \langle w_{S}, (M,N) \rangle = \sum_{S \subset \{1, \cdots, r
\}} \chi(X_{(M,N)}(w_{S})).
\end{equation}

\vskip 2mm

Each point of $X_{(M,N)}(w_{S})$ determines a sequence of
representations $L_{1}, \ldots, L_{r}$ such that $L_{j} \subset M$
if $j \notin S$ and $L_{j} \subset N$ if $j \in S$. Let $M_{j}$
(resp. $N_{j}$) be the largest submodule of $M$ (resp. $N$) in the
list $L_{1}, \ldots, L_{j}$ $(1 \le j \le r)$ or 0 if there is no
such submodule in the list. By setting $K_{j}:=M_{j} \oplus N_{j}$,
we obtain a 1-nilpotent flag in $M \oplus N$ of type $w$
$$K=(0=K_{0} \subset K_{1} \subset \cdots \subset K_{r} = M \oplus
N).$$ Thus for all $S \subset \{1, \ldots, r \}$, we see that
$X_{(M,N)}(w_{S})$ is a subvariety of $X_{M\oplus N}(w)$.

\vskip 3mm

Clearly, if $S \neq T$, $X_{(M,N)}(w_{S}) \cap X_{(M,N)}(w_{T}) =
\emptyset$. Therefore we have
\begin{equation*}
\begin{aligned}
& \bigsqcup_{S \subset \{1, \ldots, r \}} X_{(M,N)}(w_{S}) \subset
X_{M \oplus N}(w), \\
& \chi\left(\bigsqcup_{S \subset \{1, \cdots, r \}}
X_{(M,N)}(w_{S})\right) = \sum_{S \subset \{1, \ldots, r\}}
\chi(X_{(M,N)}(w_{S})).
\end{aligned}
\end{equation*}

\vskip 3mm

Conversely, given a 1-nilpotent flag $$K=(0=K_{0} \subset K_{1}
\subset \cdots \subset K_{r} =M \oplus N)$$ of type $w= S_{i_1, l_1}
\cdots S_{i_r, l_r}$ such that $K_{j}=M_{j} \oplus N_{j}$ with
$M_{j} \subset M$, $N_{j} \subset N$, we define a subset $S \subset
\{1, \ldots, r \}$ by setting  $j \in S$ if and only if $N_{j-1}
\subsetneq N_{j}$. We also define a sequence of submodules $L_{1},
\ldots, L_{r}$ by setting $L_{j}=N_{j}$ for $j \in S$, $L_{j}
=M_{j}$ for $j \notin S$. Thus we get a 1-nilpotent flag in $(M,N)$
of type $w_{S}$. Therefore we obtain a bijection
\begin{equation} \label{eq:bijection}
\bigsqcup_{S \subset \{1, \ldots, r \}} X_{(M,N)}(w_{S}) \overset
{\sim} \longleftrightarrow \left \{
\begin{aligned}
& \text{1-nilpotent flags in $M \oplus N$
of type $w$} \\
& \text{$K=(0=K_{0} \subset K_{1} \subset \cdots \subset K_{r}= M
\oplus N)$} \\
& \text{such that $K_j = (K_j \cap M) \oplus (K_j \cap N)$}
\end{aligned}
\right \}.
\end{equation}

\vskip 2mm

Note that $\C^*$ acts on $M \oplus N$ by
$$\lambda \cdot (m,n) = (\lambda m, n) \quad \text{for} \ \ \lambda
\in \C^*, m \in M, n \in N,$$ which induces a $\C^*$-action on the
Grassmannian variety  $Gr_{d}(M \oplus N)$ of $d$-dimensional
subspaces of $M \oplus N$. One can easily show that this
$\C^*$-action on $Gr_{d}(M \oplus N)$ is almost free. Moreover, $V
\subset M \oplus N$ is a fixed point under the $\C^*$-action if and
only if $V=(V \cap M) \oplus (V \cap N)$. By \eqref{eq:delta} and
\eqref{eq:bijection}, we see that $\C^*$ acts on $X_{M \oplus N}$
with
$$\left(X_{M \oplus N}(w)\right)^{\C^*} = \bigsqcup_{S \subset \{1,
\ldots, r\}} X_{(M,N)}(w_{S}).
$$

\noindent Therefore, by Lemma \ref{lem:Euler}, we conclude
\begin{equation*}
\begin{aligned}
& \langle w, M \oplus N \rangle = \chi(X_{M \oplus N}(w)) =
\chi((X_{M \oplus N}(w))^{\C^*}) \\
&\  = \chi\left(\bigsqcup_{S \subset \{1, \ldots, r\}}
X_{(M,N)}(w_{S})\right) = \sum_{S \subset \{1, \ldots, r\}}
\chi(X_{(M,N)}(w_{S})) \\
& \ =\sum_{S \subset \{1, \ldots, r\}} \langle w_{S}, (M,N) \rangle
= \langle \delta(w), (M,N) \rangle
\end{aligned}
\end{equation*}
as desired.
\end{proof}

\vskip 3mm

\begin{lemma} \label{lem:comult_c} \
{\rm \ Let $a \in E$ and write $\Delta(a) = \sum a_{(1)} \otimes
a_{(2)}$. For a word $w=S_{i_1, l_1} \cdots S_{i_r, l_r}$, set
$w'=S_{i_1, l_1}' \cdots S_{i_r, l_r}'$. Then the following
statements hold.

\vskip 2mm

(a) $[\delta(a)]=\sum a_{(1)} a_{(2)}'$.

\vskip 2mm

(b) For any representations $M$, $N$ of $Q$, we have
$$\langle a, M \oplus N \rangle = \sum \langle a_{(1)}, M \rangle
\langle a_{(2)}, N \rangle.$$ }
\end{lemma}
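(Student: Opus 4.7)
The plan is to prove (a) as a combinatorial identity between two subset-parametrized expansions, after reducing to the case of a single word by linearity, and then to derive (b) immediately by combining (a) with Lemma \ref{lem:comult_a} and Lemma \ref{lem:comult_b}.

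For (a), by linearity it suffices to take $a = w = S_{i_1,l_1} \cdots S_{i_r,l_r}$. Since $\delta$ and $\Delta$ are algebra homomorphisms specified on generators by $\delta(S_{i,l}) = S_{i,l} + S_{i,l}'$ and $\Delta(S_{i,l}) = S_{i,l} \otimes 1 + 1 \otimes S_{i,l}$, expanding the resulting products over subsets of $\{1,\ldots,r\}$ yields
$$\delta(w) = \sum_{S \subset \{1,\ldots,r\}} w_S, \qquad \Delta(w) = \sum_{S \subset \{1,\ldots,r\}} w|_{S^c} \otimes w|_S,$$
where $w_S$ denotes the word obtained from $w$ by priming the letters at the positions indexed by $S$ (keeping the original left-to-right order), and $w|_T$ denotes the subword of $w$ at positions in $T$. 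Applying the normal ordering map $[\,\cdot\,]$ to $w_S$ moves each primed letter to the right of every unprimed one while preserving the relative order within each group, giving $[w_S] = w|_{S^c} \cdot (w|_S)'$. Under the identification $a_{(1)} \otimes a_{(2)} \leftrightarrow a_{(1)} \cdot a_{(2)}'$, this matches $\sum a_{(1)} a_{(2)}'$ term by term, proving (a).

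For (b), I combine the two preceding lemmas. Lemma \ref{lem:comult_b} gives $\langle a, M \oplus N \rangle = \langle \delta(a), (M,N) \rangle$, while Lemma \ref{lem:comult_a} shows that the pairing $\langle u, (M,N) \rangle$ depends only on the normally ordered form $[u]$ of $u$ (since $X_{(M,N)}(u) \cong X_{(M,N)}([u])$). Hence by (a),
$$\langle a, M \oplus N \rangle = \langle [\delta(a)], (M,N) \rangle = \sum \langle a_{(1)} \cdot a_{(2)}', (M,N) \rangle = \sum \langle a_{(1)}, M \rangle \langle a_{(2)}, N \rangle,$$
where the final equality is again Lemma \ref{lem:comult_a} applied to each normally ordered monomial $a_{(1)} \cdot a_{(2)}'$, whose unprimed part is $a_{(1)}$ and whose primed part (after erasing primes) is $a_{(2)}$.

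The only subtle point is bookkeeping: one has to verify that the two subset expansions really produce the same subwords $w|_S$ and $w|_{S^c}$ with the correct induced orderings, so that the normal ordering on the $\delta$-side matches the Sweedler notation on the $\Delta$-side. Once this is checked on a single word (a direct exercise using the interchange $(S_{i,l}\otimes 1)(1 \otimes S_{j,m}) = S_{i,l} \otimes S_{j,m}$ in $\mathscr E \otimes \mathscr E$), multiplicativity of both $\delta$ and $\Delta$ handles the general monomial case, and no genuine obstacle arises.
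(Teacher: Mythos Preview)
Your proof is correct, and part (b) matches the paper's argument essentially verbatim: both chain Lemma~\ref{lem:comult_b}, then part (a), then Lemma~\ref{lem:comult_a}. The difference lies in part (a). The paper proves $[\delta(a)] = \sum a_{(1)} a_{(2)}'$ by induction on the length of the word $a$, checking the base case $a = S_{i,l}$ and then computing $[\delta(aS_{i,l})]$ and $\Delta(aS_{i,l})$ separately to verify the inductive step. You instead expand both $\delta(w)$ and $\Delta(w)$ directly as sums over subsets $S \subset \{1,\ldots,r\}$ and observe that normal ordering sends $w_S$ to $w|_{S^c}\,(w|_S)'$, matching the Sweedler terms one-for-one. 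Your approach is more transparent combinatorially and makes the bijection between terms explicit; the paper's inductive argument is more mechanical but avoids having to articulate the subset bookkeeping. Either way the content is the same elementary identity, so neither route has a real advantage.
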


\begin{proof} \ \ We will prove (a) by induction on the length of
$a$. If $a=S_{i,l}$, our assertion is clear. Note that
$$
\begin{aligned}
\Delta(a S_{i,l}) & =\Delta(a) \Delta(S_{i,l})=(\sum a_{(1)}
\otimes a_{(2)}) (S_{i,l} \otimes 1 + 1 \otimes S_{i,l}) \\
&= \sum a_{(1)} S_{i,l} \otimes a_{(2)} + \sum a_{(1)} \otimes
a_{(2)} S_{i,l}.
\end{aligned}
$$

On the other hand,
\begin{equation*}
\begin{aligned}
& [\delta(a S_{i,l})]  = [\delta(a) \delta(S_{i,l})] =[(\sum
a_{(1)} a_{(2)}')(S_{i,l} + S_{i,l}')] \\
&=[\sum a_{(1)} a_{(2)}' S_{i,l}]+[\sum a_{(1)} a_{(2)}' S_{i,l}']
=\sum a_{(1)} S_{i,l} a_{(2)}' + \sum a_{(1)} a_{(2)}' S_{i,l}'
\end{aligned}
\end{equation*}
as desired.

\vskip 2mm

Thus by Lemma \ref{lem:comult_a} and Lemma \ref{lem:comult_b}, we
have
\begin{equation*}
\begin{aligned}
 \langle a,& \,  M\oplus N \rangle = \langle \delta(a), (M,N) \rangle =
\langle \delta(a)_{1} \delta(a)_{2}, (M,N) \rangle \\
& = \sum \langle a_{(1)} a_{(2)}', (M,N) \rangle =\sum \langle
a_{(1)}, M \rangle \langle a_{(2)}, N \rangle,
\end{aligned}
\end{equation*}
which proves (b).
\end{proof}

\vskip 3mm

\begin{proposition} \label{prop:co-ideal} \
{\rm \ ${\mathscr I}$ is a co-ideal of ${\mathscr E}$. That is,
$$\Delta({\mathscr I}) \subset {\mathscr E} \otimes {\mathscr I}
+  {\mathscr I} \otimes {\mathscr E}.$$ }
\end{proposition}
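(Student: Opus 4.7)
The plan is to deduce this from Lemmas \ref{lem:comult_a}, \ref{lem:comult_b}, \ref{lem:comult_c} and Proposition \ref{prop:tilde R}, exploiting the isomorphism $\widetilde{\mathscr R} \cong {\mathscr R} \otimes {\mathscr R}$. The key observation is that $\Delta(u) \in \mathscr{E} \otimes \mathscr{I} + \mathscr{I} \otimes \mathscr{E}$ is equivalent to the image of $\Delta(u)$ being zero in the quotient $\mathscr{R} \otimes \mathscr{R}$.

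First I would take $u \in \mathscr{I}$ and compute $\langle \delta(u), (M,N) \rangle$ for arbitrary representations $M, N$ of $Q$. By Lemma \ref{lem:comult_b},
$$\langle \delta(u), (M,N) \rangle = \langle u, M \oplus N \rangle = 0,$$
since $u \in \mathscr{I}$. Hence $\delta(u) \in \widetilde{\mathscr I}$, i.e., $\delta(u) = 0$ in $\widetilde{\mathscr R}$. Consequently $[\delta(u)] = 0$ in $\widetilde{\mathscr R}$ as well, because by Lemma \ref{lem:comult_a} the normal ordering map descends to the identity on $\widetilde{\mathscr R}$.

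Next, writing $\Delta(u) = \sum u_{(1)} \otimes u_{(2)}$, Lemma \ref{lem:comult_c}(a) (extended linearly from generators, which is legitimate since both $\Delta$ and $\delta$ are algebra homomorphisms and $[\ \cdot\ ]$ is compatible with their product structure modulo $\widetilde{\mathscr I}$) yields $[\delta(u)] = \sum u_{(1)} u_{(2)}'$ in $\widetilde{\mathscr E}$. Therefore $\sum u_{(1)} u_{(2)}' = 0$ in $\widetilde{\mathscr R}$. Now recall that the surjective algebra homomorphism $\phi: {\mathscr R} \otimes {\mathscr R} \to \widetilde{\mathscr R}$ of Proposition \ref{prop:tilde R} sends $u_{(1)} \otimes u_{(2)} \mapsto u_{(1)} u_{(2)}'$, and $\phi$ is in fact an isomorphism. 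Hence $\sum u_{(1)} \otimes u_{(2)} = 0$ in ${\mathscr R} \otimes {\mathscr R}$, which is exactly the statement that
$$\Delta(u) = \sum u_{(1)} \otimes u_{(2)} \in \ker\bigl(\mathscr E \otimes \mathscr E \twoheadrightarrow \mathscr R \otimes \mathscr R\bigr) = \mathscr I \otimes \mathscr E + \mathscr E \otimes \mathscr I.$$

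The main obstacle I anticipate is the bookkeeping in step two: making sure that the identity $[\delta(u)] = \sum u_{(1)} u_{(2)}'$ established for a single word in Lemma \ref{lem:comult_c}(a) extends correctly to an arbitrary element $u \in \mathscr E$, and that one may freely pass between the equalities in $\widetilde{\mathscr E}$ and in $\widetilde{\mathscr R}$. Once this reordering identity is in hand, the rest of the proof is a clean application of the injectivity of $\phi$ from Proposition \ref{prop:tilde R}.
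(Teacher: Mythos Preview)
Your proof is correct and follows essentially the same route as the paper: show $\delta(u)\in\widetilde{\mathscr I}$ via Lemma~\ref{lem:comult_b}, then use the identification $\widetilde{\mathscr R}\cong\mathscr R\otimes\mathscr R$ from Proposition~\ref{prop:tilde R} together with Lemma~\ref{lem:comult_c}(a) to conclude that $\Delta(u)$ dies in $\mathscr R\otimes\mathscr R$. The paper simply packages these steps into a single commutative diagram rather than spelling them out. Regarding your anticipated obstacle: Lemma~\ref{lem:comult_c}(a) is already proved for arbitrary \emph{words} (by induction on length), so the extension to general $u\in\mathscr E$ is purely by linearity of $[\,\cdot\,]$, $\delta$, and $\Delta$ --- no multiplicativity argument is needed.
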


\begin{proof} \ If $a \in {\mathscr I}$, then
$\langle \delta(a), (M,N) \rangle = \langle a, M \oplus N \rangle
=0$ for all $M, N$, which implies $\delta(a) \in \widetilde{\mathscr
I}$. Thanks to the following commutative diagram
\begin{center}
\begin{tikzcd}[row sep=tiny]
                                                     & \widetilde{\mathscr E} \arrow[two heads]{r} & \widetilde{\mathscr{R}}\arrow{dd}{\wr}\\
{\mathscr E} \arrow{ur}{[\delta]}\arrow[dr, "\Delta" '] &          \\
                                                     & \mathscr{E} \otimes \mathscr{E} \arrow[two heads]{r}\arrow[uu] & \mathscr{R}\otimes \mathscr{R}
\end{tikzcd}
\end{center}
%
%
%
we conclude $$\Delta(a) \in {\mathscr E} \otimes {\mathscr I} +
{\mathscr I} \otimes {\mathscr E}.$$
\end{proof}

We say that an element $a \in {\mathscr R}$ is {\it primitive} if
$\Delta(a) = a \otimes 1 + 1 \otimes a$.

\vskip 3mm

\begin{theorem} \label{thm:primitive} \
{\rm \ Let ${\mathscr L} = \{a \in {\mathscr R} \mid \Delta(a) = a
\otimes 1 + 1 \otimes a \}$. Then the following statements hold.

\vskip 2mm

(a) ${\mathscr L}$ is a Lie bi-algebra.

\vskip 2mm

(b) ${\mathscr R} \cong U({\mathscr L})$. }
\end{theorem}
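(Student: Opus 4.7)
The plan is to handle part (a) by a direct computation and part (b) by invoking the Milnor--Moore (Cartier--Milnor--Moore) theorem.

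For part (a), I would show that $\mathscr{L}$ is closed under the commutator by expanding, for $a,b \in \mathscr{L}$,
\[
\Delta([a,b]) = \Delta(a)\Delta(b) - \Delta(b)\Delta(a),
\]
where $\Delta(a) = a\otimes 1 + 1\otimes a$ and similarly for $b$. The cross terms $a\otimes b$ and $b\otimes a$ cancel pairwise, leaving $\Delta([a,b]) = [a,b]\otimes 1 + 1\otimes [a,b]$. Hence $\mathscr{L}$ is a Lie subalgebra of $\mathscr{R}$ under the commutator bracket and inherits the Jacobi identity from $\mathscr{R}$. The Lie bi-algebra cobracket on $\mathscr{L}$ would be induced by the antisymmetrization of the reduced coproduct $\bar{\Delta}(a) = \Delta(a) - a\otimes 1 - 1\otimes a$ restricted to the primitive layer.

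For part (b), I would apply Milnor--Moore, which requires $\mathscr{R}$ to be a graded, connected, cocommutative bi-algebra over a field of characteristic zero. The $\mathsf{Q}_+$-grading on $\mathscr{E}$ defined by $\deg S_{i,l} = l\alpha_i$ descends to $\mathscr{R}$, since $X_M(w) = \emptyset$ unless $\underline{\dim}\,M = \deg w$, making $\mathscr{I}$ a graded ideal; combined with Propositions \ref{prop:ideal} and \ref{prop:co-ideal}, this gives $\mathscr{R}$ the structure of a graded bi-algebra. Connectedness holds because the only word of degree $0$ is the empty word and $1 \notin \mathscr{I}$ (since $\langle 1, 0\rangle = \chi(\mathrm{pt}) = 1$), so $\mathscr{R}_0 = \C$. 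Cocommutativity follows because $\tau\circ\Delta(S_{i,l}) = \Delta(S_{i,l})$ on generators, and both $\Delta$ and $\tau\circ\Delta$ are algebra homomorphisms $\mathscr{R} \to \mathscr{R}\otimes\mathscr{R}$ which must therefore coincide by the universal property applied to the generating set $\{S_{i,l}\}$. With these three properties in hand, Milnor--Moore delivers a canonical algebra isomorphism $\mathscr{R} \cong U(P(\mathscr{R})) = U(\mathscr{L})$.

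The steps above are mostly formal once the bi-algebra structure has been set up, so the principal delicacy is really bookkeeping: one must verify rigorously that the extension of $\Delta$ from generators to all of $\mathscr{R}$ is well-defined (handled by Proposition \ref{prop:co-ideal}) and that the Hopf antipode needed in the Milnor--Moore framework exists --- this is automatic for any graded connected bi-algebra, so it poses no real obstruction. The least routine point is confirming cocommutativity on all of $\mathscr{R}$ from cocommutativity on generators; the universal-property argument above handles it cleanly. Part (a) is then subsumed inside the conclusion of (b), since the Lie bi-algebra structure on the primitive elements of a graded connected cocommutative bi-algebra is a standard consequence of the Milnor--Moore picture.
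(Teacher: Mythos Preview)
Your proposal is correct and follows essentially the same approach as the paper. The paper's own proof is terser: it simply notes that $\mathscr{R}$ is a bi-algebra generated by its primitive elements $S_{i,l}$ and invokes \cite{Sweedler}, whereas you explicitly unpack the Milnor--Moore hypotheses (graded, connected, cocommutative) that this single observation already encodes.
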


\begin{proof} \ Using Proposition \ref{prop:co-ideal}, it is straightforward to verify
(a). Since ${\mathscr R}$ is a bi-algebra generated by its primitive
elements, the statement (b) follows from \cite{Sweedler}.
\end{proof}

\vskip 3mm

\begin{proposition} \label{prop:primitive}

\vskip 2mm

{\rm Every primitive element vanishes on decomposable modules.
}
\end{proposition}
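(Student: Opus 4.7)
The plan is to apply Lemma \ref{lem:comult_c}(b) directly, which reduces the pairing on a direct sum to a sum indexed by the coproduct. Suppose $a \in \mathscr{R}$ is primitive and let $M = M_1 \oplus M_2$ be a decomposable representation of $Q$, with both $M_1, M_2 \neq 0$. We may assume $a$ is homogeneous of degree $\alpha \in \mathsf{Q}_+$, since $\Delta$ preserves the $\mathsf{Q}_+$-grading, primitivity passes to each homogeneous component, and only the component of degree $\underline{\dim}\, M$ contributes to $\langle a, M\rangle$. With $\Delta(a) = a \otimes 1 + 1 \otimes a$, Lemma \ref{lem:comult_c}(b) yields
$$\langle a, M_1 \oplus M_2\rangle = \langle a, M_1\rangle \, \langle 1, M_2\rangle + \langle 1, M_1\rangle \, \langle a, M_2\rangle.$$

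The key step is to identify $\langle 1, N\rangle$ for an arbitrary representation $N$ of $Q$, where $1 \in \mathscr{E}$ is the empty word. By Definition \ref{def:flag}, a $1$-nilpotent flag in $N$ of type the empty word is a filtration $(0 = L_0 = N)$ of length zero, which forces $N = 0$. Consequently $X_N(1)$ is a single point when $N = 0$ and empty otherwise, so $\langle 1, N\rangle = \chi(X_N(1)) = \delta_{N,0}$.

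Since both $M_1$ and $M_2$ are nonzero, both $\langle 1, M_1\rangle$ and $\langle 1, M_2\rangle$ vanish, and therefore $\langle a, M_1 \oplus M_2\rangle = 0$, completing the argument. There is no substantial obstacle here: the statement is essentially an unwinding of the co-multiplicativity encoded in Lemma \ref{lem:comult_c}(b), once one observes that $\langle 1, -\rangle$ is the indicator of the zero module and that a primitive element has no ``diagonal'' term of the form $a_{(1)} \otimes a_{(2)}$ with both factors of positive degree.
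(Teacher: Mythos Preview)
Your argument is correct and follows essentially the same route as the paper: apply Lemma~\ref{lem:comult_c}(b) to a primitive element and use that $\langle 1, N\rangle$ vanishes on nonzero $N$. Your treatment of $\langle 1, N\rangle$ is in fact slightly more careful than the paper's, which simply asserts $\langle 1, M\rangle = \chi(\emptyset) = 0$ for all $M$ without singling out the case $M=0$; your observation that $X_N(1)$ is a point when $N=0$ is the right reading of Definition~\ref{def:flag}, and since both summands $M_1, M_2$ are nonzero this refinement does not affect the conclusion.
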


\begin{proof} \ Note that for any representation $M$ of $Q$,
$$\langle 1, M \rangle = \chi(X_{M}(1))=\chi(\emptyset) =0.$$ Hence
for any primitive element $a \in {\mathscr R}$ and representations $M$, $N$ of $Q$, we
have $$\langle a, M \oplus N \rangle = \langle a, M \rangle \langle
1, N \rangle + \langle 1, M \rangle \langle a, N \rangle =0,$$ which
is our assertion.
\end{proof}

\vskip 3mm

\begin{corollary} \label{cor:decomp} \
{\rm \ If there is no 1-nilpotent indecomposable representation of
$Q$ with dimension vector $\alpha$, then ${\mathscr L}_{\alpha}=0$.
}
\end{corollary}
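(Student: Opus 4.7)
The plan is to argue by contrapositive: assuming ${\mathscr L}_{\alpha} \neq 0$, I will produce a 1-nilpotent indecomposable representation of $Q$ with dimension vector $\alpha$. Pick any nonzero element $a \in {\mathscr L}_{\alpha}$. Since $a$ is a nonzero class in ${\mathscr R} = {\mathscr E}/{\mathscr I}$, the defining property of the radical ${\mathscr I}$ delivers some representation $M$ of $Q$ with $\langle a, M \rangle \neq 0$. Writing $a = \sum_{w} c_{w}\, w$, non-vanishing of the pairing forces $\chi(X_{M}(w)) \neq 0$, and hence $X_{M}(w) \neq \emptyset$, for at least one word $w$ with $c_{w} \neq 0$. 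Since ${\mathscr R}$ is ${\mathsf Q}_{+}$-graded and $X_{M}(w) = \emptyset$ whenever $\deg w \neq \underline{\dim}\, M$, this pins down $\underline{\dim}\, M = \alpha$.

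Next I would invoke Proposition \ref{prop:primitive}: every primitive element of ${\mathscr R}$ vanishes on decomposable modules. Since $a$ is primitive and $\langle a, M \rangle \neq 0$, the module $M$ cannot split as a non-trivial direct sum, so $M$ is indecomposable. It remains to verify that $M$ is 1-nilpotent in the sense of Definition \ref{def:nilpotent}(b). A point of the non-empty variety $X_{M}(w)$ with $w = S_{i_{1},l_{1}} \cdots S_{i_{r}, l_{r}}$ is an $I$-graded filtration $0 = L_{0} \subset \cdots \subset L_{r} = M$ whose $k$-th quotient has dimension vector $l_{k} \alpha_{i_{k}}$ and on which every loop $h \in \Omega(i_{k})$ acts trivially. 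For each $i \in I^{\text{im}}$, the intersected flag $L_{k} \cap M_{i}$ jumps exactly at those indices $k$ with $i_{k} = i$, and on each such jump all loops in $\Omega(i)$ vanish by construction; collapsing repeated terms yields precisely the vertex-local flag required by Definition \ref{def:nilpotent}(b). Thus $M$ is a 1-nilpotent indecomposable representation of dimension vector $\alpha$, establishing the contrapositive.

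The only step requiring any care is the last matching of the Schofield ``type $w$'' flag with the vertex-by-vertex formulation of 1-nilpotency, but once the dimension-vector condition on the successive quotients is used to identify which jumps contribute at a fixed imaginary vertex, the vanishing of the loops at that vertex is immediate. Everything else is a direct application of Proposition \ref{prop:primitive} together with the definition of the radical ${\mathscr I}$.
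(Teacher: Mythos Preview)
Your argument is correct and follows essentially the same route as the paper's proof: both hinge on Proposition~\ref{prop:primitive} together with the observation that $\langle a, M\rangle$ can be nonzero only when $M$ is 1-nilpotent with $\underline{\dim}\,M=\alpha$. The paper states this last fact by citing the earlier remark (just after Proposition~\ref{prop:ideal}) that $X_{M}(w)=\emptyset$ unless $\underline{\dim}\,M=\deg w$ and $M$ is 1-nilpotent, and then argues directly that $\langle a,M\rangle=0$ for all $M$; you instead run the contrapositive and spell out the flag argument for 1-nilpotency explicitly, but the content is the same.
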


\begin{proof} \ Recall that for $a \in {\mathscr L}_{\alpha}$,
$\langle a, M \rangle =0$ unless $\underline{\dim}\, M = \alpha$ and
$M$ is 1-nilpotent. Hence by Proposition \ref{prop:primitive},
$\langle a, M \rangle =0$ for all $M$, which implies $a=0$.
\end{proof}

\vskip 3mm

\begin{proposition} \label{prop:Serre_a} \
{\rm \ For $i \in I^{\text{re}}$, $(j,l) \in I^{\infty}$ with $i
\neq (j,l)$, we have
$$(\text{ad} S_{i})^{1- l a_{ij}} (S_{j,l}) =0 \ \ \text{in} \ \, {\mathscr L},$$
where $a_{ij} = -c_{ij} -c_{ji}$ as in \eqref{eq:A_Q}. }
\end{proposition}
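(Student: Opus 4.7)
The element $x := (\mathrm{ad}\, S_i)^n(S_{j,l})$ with $n := 1-la_{ij}$ expands in $\mathscr{E}$ as
$$x = \sum_{k=0}^n (-1)^k \binom{n}{k}\, S_i^{\,n-k}\, S_{j,l}\, S_i^{\,k},$$
and the task is to prove $x \in \mathscr{I}$, i.e.\ $\langle x, M\rangle = 0$ for every representation $M$ of $Q$. The plan proceeds by a reduction to a single class of indecomposables followed by a geometric Euler-characteristic cancellation.

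First I will reduce to indecomposables. Since $S_i$ and $S_{j,l}$ map to primitive elements of $\mathscr{R}$ (by the definition of $\Delta$) and the bracket of primitive elements is primitive, the image $\overline{x} \in \mathscr{R}$ lies in $\mathscr{L}$. Proposition \ref{prop:primitive} then forces $\langle x, M\rangle = 0$ whenever $M$ is decomposable, while homogeneity of $x$ of degree $\beta := n\alpha_i + l\alpha_j$ forces $\langle x, M\rangle = 0$ unless $\underline{\dim}\,M = \beta$ and $M$ is 1-nilpotent. If no 1-nilpotent indecomposable of dimension $\beta$ exists, Corollary \ref{cor:decomp} concludes the proof; otherwise, fix such an $M$.

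Next I will describe each flag variety $X_M(S_i^{n-k}S_{j,l}S_i^k)$ geometrically. Because $i \in I^{\mathrm{re}}$ has no loops, a 1-nilpotent flag of this type amounts to: a subrepresentation $L \subset M$ of dimension $(n-k)\alpha_i$; an $l$-dimensional $j$-subrepresentation $N \subset M/L$ on which every loop $x_h$ $(h \in \Omega(j))$ acts as zero; and complete composition series in the two pure $i$-layers $L$ and $(M/L)/N$. Hence $\langle S_i^{n-k} S_{j,l} S_i^k, M\rangle = \chi(Y_k)$, where $Y_k$ fibers over the variety of pairs $(L,N)$ with fiber a product of two classical flag varieties of combined length $n$.

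The crux will be to establish
$$\sum_{k=0}^n (-1)^k \binom{n}{k}\, \chi(Y_k) = 0.$$
Following Schofield's strategy in the Kac-Moody case, I plan to construct a $\C^*$-action on the parameter space, apply Lemma \ref{lem:Euler}(c)--(d) to replace each $\chi(Y_k)$ by the Euler characteristic of its fixed locus, and then use indecomposability of $M$ to show that this fixed-point count is a polynomial function of $k$ of degree strictly less than $n$; the classical identity $\sum_k (-1)^k \binom{n}{k} P(k) = 0$ for $\deg P < n$ then closes the argument. The main obstacle is constructing the $\C^*$-action explicitly and verifying the polynomial-degree bound while respecting the extra 1-nilpotency constraint at the $j$-layer, which does not appear in the classical Schofield setting; I expect this constraint to be $\C^*$-invariant so it can be absorbed into the fixed-point geometry.
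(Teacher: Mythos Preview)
Your reduction to $1$-nilpotent indecomposables via primitivity and Corollary~\ref{cor:decomp} is correct and is exactly how the paper begins. But you overlook that the branch ``otherwise, fix such an $M$'' is \emph{empty}, and this is the entire content of the paper's proof. Set $\beta=(1-la_{ij})\alpha_i+l\alpha_j$. Since $i\in I^{\mathrm{re}}$ and $i\neq j$, applying the simple reflection $r_i$ gives
\[
r_i(\beta)=\beta-\langle h_i,\beta\rangle\,\alpha_i=-\alpha_i+l\alpha_j,
\]
which lies in neither $\mathsf{Q}_+$ nor $-\mathsf{Q}_+$ and hence is not a root; by $W$-invariance of $\Delta$, $\beta\notin\Delta_+(\g_Q)$. (The paper records this as $\langle h_i,-la_{ij}\alpha_i+l\alpha_j\rangle=-la_{ij}\ge 0$, so adding one more $\alpha_i$ overshoots the $\alpha_i$-string.) Proposition~\ref{prop:pos roots} then says there is no $1$-nilpotent indecomposable of dimension vector $\beta$, and Corollary~\ref{cor:decomp} finishes.

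Consequently your entire second half --- the $\C^*$-action, the fixed-point count, the polynomial-degree bound --- is unnecessary. It is also only a sketch: you have not constructed the $\C^*$-action, you merely ``expect'' the $1$-nilpotency constraint at the $j$-layer to be invariant, and the claimed degree bound on the fixed-point polynomial is asserted rather than proved. Schofield's original argument along these lines (for quivers without loops) is delicate, and adapting it to the loop case would require real work that the paper deliberately avoids by invoking Proposition~\ref{prop:pos roots}. The point of that proposition in the paper's architecture is precisely to convert Serre-type vanishing into the root-theoretic statement $\beta\notin\Delta_+$, so that no direct flag-variety computation is needed.
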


\begin{proof} \ Let $u =(\text{ad} S_{i})^{1- l a_{ij}} (S_{j,l})
\in {\mathscr L}_{(1- l a_{ij})\alpha_i + l \alpha_j}$. Since
$$\langle h_i, -l a_{ij} \alpha_i + l \alpha_j \rangle = -2 l a_{ij}
+ l a_{ij} = -l a_{ij} \ge 0,$$ we see that
$$-l a_{ij} \alpha_i + l \alpha_j + \alpha_i = (1-l a_{ij}) \alpha_i
+ l \alpha_j \notin \Delta_{+}(\g_{Q}).$$ By Proposition
\ref{prop:pos roots}, there is no 1-nilpotent indecomposable
representation of $Q$ with dimension vector $(1-l a_{ij}) \alpha_i +
l \alpha_j$. Hence by Corollary \ref{cor:decomp}, $u=0$.
\end{proof}

\vskip 3mm

\begin{proposition} \label{prop:Serre_b} \
{\rm \ If $a_{ij}=0$ for some $i,j \in I$, then
$$[S_{i,k}, S_{j,l}]=0 \ \ \text{for all} \ k,l \ge 1.$$

}
\end{proposition}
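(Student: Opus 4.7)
The plan is to write $u := [S_{i,k}, S_{j,l}]$ and note that $u$ is primitive in $\mathscr{R}$ (the commutator of primitives being primitive), so it lies in $\mathscr{L}_{k\alpha_i + l\alpha_j}$; it thus suffices to prove $u = 0$. I would split into the cases $i \neq j$ and $i = j$, handling the first by Corollary \ref{cor:decomp} and the second by an explicit Euler-characteristic calculation in $\mathscr{R}$.

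For $i \neq j$, the hypothesis $a_{ij} = -c_{ij} - c_{ji} = 0$ combined with $c_{ij}, c_{ji} \ge 0$ forces $c_{ij} = c_{ji} = 0$, so no arrows of $Q$ join the vertices $i$ and $j$. Any representation of $Q$ with dimension vector $k\alpha_i + l\alpha_j$ (for $k, l \ge 1$) therefore splits as a direct sum of its restrictions to $\{i\}$ and $\{j\}$, both nonzero, and in particular is decomposable. Hence there is no $1$-nilpotent indecomposable representation in this degree, and Corollary \ref{cor:decomp} gives $\mathscr{L}_{k\alpha_i + l\alpha_j} = 0$, whence $u = 0$.

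For $i = j$, the hypothesis $a_{ii} = 2 - 2 g_i = 0$ forces $g_i = 1$, so $i \in I^{\text{iso}}$ carries exactly one loop. The argument of Case 1 fails here, since every Jordan block is a $1$-nilpotent indecomposable representation at $i$ of arbitrary dimension. Instead, I would verify directly that $\langle S_{i,k} S_{i,l} - S_{i,l} S_{i,k}, M \rangle = 0$ for every representation $M$ of $Q$, which forces $[S_{i,k}, S_{i,l}] \in \mathscr{I}$. The pairing vanishes unless $\underline{\dim}\, M = (k+l)\alpha_i$, in which case $M = (V, x)$ with $\dim V = k+l$ and $x : V \to V$ nilpotent. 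A $1$-nilpotent flag of type $S_{i,k} S_{i,l}$ in $M$ is a $k$-dimensional subspace $L \subset V$ with $x(L) = 0$ and $x(V) \subset L$, equivalently $\mathrm{Im}(x) \subset L \subset \Ker(x)$. Setting $r := \dim \mathrm{Im}(x)$, the assignment $L \mapsto L/\mathrm{Im}(x)$ identifies $X_M(S_{i,k} S_{i,l})$ with the Grassmannian $Gr(k-r,\, k+l-2r)$ of $(k-r)$-dimensional subspaces of the $(k+l-2r)$-dimensional quotient $\Ker(x)/\mathrm{Im}(x)$; symmetrically, $X_M(S_{i,l} S_{i,k}) \cong Gr(l-r,\, k+l-2r)$. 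Both varieties are empty when $r > \min(k, l)$, and when nonempty the identity $(k-r) + (l-r) = k+l-2r$ combined with $\binom{n}{a} = \binom{n}{n-a}$ gives equal Euler characteristics, as required.

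The principal (minor) obstacle is exactly this diagonal case $i = j \in I^{\text{iso}}$, where the Corollary \ref{cor:decomp} argument breaks down; the Grassmannian computation, resting ultimately on the symmetry of binomial coefficients, is the step that repairs it.
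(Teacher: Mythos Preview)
Your proof is correct. For $i \neq j$ both you and the paper land on Corollary~\ref{cor:decomp}, though by slightly different routes: the paper argues that the root space $(\g_Q)_{k\alpha_i + l\alpha_j}$ vanishes (it is spanned by iterated brackets each containing some $[e_{i,p}, e_{j,q}] = 0$) and then invokes Proposition~\ref{prop:pos roots} to rule out a $1$-nilpotent indecomposable of that dimension vector; you instead observe directly from $c_{ij}=c_{ji}=0$ that any representation supported on $\{i,j\}$ splits along the vertices. Either path is fine, and yours is arguably more elementary since it avoids Proposition~\ref{prop:pos roots}.

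The diagonal case $i = j \in I^{\text{iso}}$ is where your argument genuinely adds something. The paper's root-space argument does not go through here: $(\g_Q)_{(k+l)\alpha_i}$ is one-dimensional, spanned by the generator $e_{i,k+l}$, so it is \emph{not} zero, and $(k+l)\alpha_i$ \emph{is} the dimension vector of a $1$-nilpotent indecomposable (a single nilpotent Jordan block). Your explicit computation---identifying $X_M(S_{i,k}S_{i,l})$ and $X_M(S_{i,l}S_{i,k})$ with the complementary Grassmannians $Gr(k-r,\,k+l-2r)$ and $Gr(l-r,\,k+l-2r)$ inside $\Ker(x)/\operatorname{Im}(x)$ and invoking $\binom{n}{a}=\binom{n}{n-a}$ for their Euler characteristics---is exactly the right way to close this case, and in fact patches a gap in the paper's proof as written. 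One cosmetic point: rather than asserting ``$x$ nilpotent'' up front, it would be cleaner to note that if $x^2 \neq 0$ then both flag varieties are empty and the pairing vanishes trivially, and otherwise proceed with your Grassmannian identification.
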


\begin{proof} \ Let $u=[S_{i,k}, S_{j,l}] \in {\mathscr L}_{k
\alpha_i + l \alpha_j}$. Note that $(\g_{Q})_{k \alpha_i + l
\alpha_j}$ is spanned by the brackets of the form
$$[e_{i, p_1}, e_{i, p_2}, \cdots, e_{i, p_r}, e_{j,q_1}, e_{j,
q_2}, \cdots, e_{j, q_s}] \ \ (r,s \ge 1, \ p_m, q_n \ge 1).$$ But
if $a_{ij}=0$, then $[e_{i,p}, e_{j,q}]=0$ for all $p, q \ge 1$, and
hence all the above brackets are $0$, from which we conclude
$(\g_{Q})_{k \alpha_i + l \alpha_j}=0$. Thus by Proposition
\ref{prop:pos roots}, $k \alpha_i + l \alpha_j$ is not a dimension
vector of a 1-nilpotent indecomposable representation of $Q$ and
Corollary \ref{cor:decomp} implies $u=0$.
\end{proof}

\vskip 3mm

Combining Proposition \ref{prop:Serre_a} and Proposition
\ref{prop:Serre_b}, we obtain:

\vskip 2mm

\begin{theorem} \label{thm:main_a} \hfill

\vskip 2mm

{\rm (a) There is a surjective algebra homomorphism
$$\Phi:U^+(\g_{Q}) \longrightarrow {\mathscr R} \ \
\text{given by} \ \  e_{il} \mapsto S_{i,l} \ \ \text{for} \ (i,l)
\in I^{\infty}.$$

\vskip 2mm

(b) There is a surjective Lie algebra homomorphism
$$\Phi_{0}: \g_{Q}^{+} \longrightarrow {\mathscr L}
\ \ \text{given by} \ \ e_{il} \mapsto S_{i,l} \ \ \text{for} \
(i,l) \in I^{\infty}.$$ }
\end{theorem}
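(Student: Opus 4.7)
The plan is to observe that both $\Phi$ and $\Phi_{0}$ are determined by their values on generators, so the only content is verifying the defining relations of $U^{+}(\g_{Q})$ and $\g_{Q}^{+}$. On the positive side of a Borcherds-Bozec algebra these reduce to the Serre-type identities
$$(\text{ad}\, e_{i})^{1 - la_{ij}}(e_{jl}) = 0 \quad \bigl(i \in I^{\text{re}},\ i \neq (j,l)\bigr), \qquad [e_{ik}, e_{jl}] = 0 \quad (a_{ij} = 0),$$
and these have already been verified for $\{S_{i,l}\}$ in $\mathscr{L}$ by Propositions~\ref{prop:Serre_a} and~\ref{prop:Serre_b}.

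I would first establish (b). By the construction of $\Delta$ on $\mathscr{E}$ and its descent to $\mathscr{R}$ via Proposition~\ref{prop:co-ideal}, each $S_{i,l}$ is primitive, so $S_{i,l} \in \mathscr{L}$. Since $\g_{Q}^{+}$ is presented on generators $\{e_{il} \mid (i,l) \in I^{\infty}\}$ modulo the Serre-type relations above (cf.~\cite{Bozec2014b}), the assignment $e_{il} \mapsto S_{i,l}$ extends to a well-defined Lie algebra homomorphism $\Phi_{0}: \g_{Q}^{+} \to \mathscr{L}$. Part (a) is then obtained by taking the induced map on universal enveloping algebras $\Phi := U(\Phi_{0}): U^{+}(\g_{Q}) \to U(\mathscr{L}) = \mathscr{R}$, where the last identification is Theorem~\ref{thm:primitive}(b); equivalently, one factors the canonical map $\mathscr{E} \to \mathscr{R}$ through $U^{+}(\g_{Q})$ using that the Serre relations satisfied in $\mathscr{L}$ hold \emph{a fortiori} in $\mathscr{R}$.

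For surjectivity, $\Phi$ is surjective because $\mathscr{R}$ is generated by the images $\{S_{i,l}\}$ as an associative algebra. The surjectivity of $\Phi_{0}$ then follows from a Milnor--Moore-type principle: letting $\mathfrak{h} \subseteq \mathscr{L}$ denote the Lie subalgebra generated by $\{S_{i,l}\}$, the associative subalgebra $U(\mathfrak{h}) \subseteq U(\mathscr{L}) = \mathscr{R}$ contains every $S_{i,l}$ and hence equals $\mathscr{R}$; the PBW theorem in characteristic zero then forces $\mathfrak{h} = \mathscr{L}$.

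The only non-routine step is this last deduction; everything else is bookkeeping on top of the Serre relations in $\mathscr{L}$, the co-ideal property of $\mathscr{I}$, and the bi-algebra identification $\mathscr{R} \cong U(\mathscr{L})$ established in Theorem~\ref{thm:primitive}.
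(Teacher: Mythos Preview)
Your proof is correct and follows the same approach as the paper, which simply states the theorem as an immediate consequence of Propositions~\ref{prop:Serre_a} and~\ref{prop:Serre_b}. You have spelled out details the paper leaves implicit, most notably the PBW argument deducing surjectivity of $\Phi_{0}$ from that of $\Phi$ via Theorem~\ref{thm:primitive}(b).
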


\vskip 3mm

We will prove $\Phi$ and $\Phi_{0}$ are isomorphisms. To this end,
it suffices to prove the following proposition.

\vskip 2mm

\begin{proposition} \label{prop:main_b} \
{\rm Let ${\mathscr C}$ be the algebra generated by the
characteristic functions $\theta_{i,l}$ $(i,l) \in I^{\infty}$. Then
there exists an algebra isomorphism
$${\mathscr R} \overset{\sim} \longrightarrow {\mathscr C} \ \
\text{given by} \ \ S_{i,l} \mapsto \theta_{i,l} \ \ \text{for} \
(i,l) \in I^{\infty}.$$ }
\end{proposition}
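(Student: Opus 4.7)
The plan is to construct a surjective algebra homomorphism $\widetilde\Xi:\mathscr{R}\twoheadrightarrow\mathscr{C}$ sending $S_{i,l}\mapsto\theta_{i,l}$, and then combine it with the surjection $\Phi:U^+(\g_Q)\twoheadrightarrow\mathscr{R}$ from Theorem~\ref{thm:main_a}(a) and Bozec's isomorphism $U^+(\g_Q)\xrightarrow{\sim}\mathscr{C}$ from Proposition~\ref{prop:algC}. The composition $\widetilde\Xi\circ\Phi$ takes $e_{il}\mapsto S_{i,l}\mapsto\theta_{i,l}$ and therefore coincides with Bozec's isomorphism; since an isomorphism factored through two surjections forces each factor to be an isomorphism, both $\Phi$ and $\widetilde\Xi$ will be isomorphisms, giving the result.

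The map $\Xi:\mathscr{E}\to\mathscr{C}$ defined on generators by $S_{i,l}\mapsto\theta_{i,l}$ exists (since $\mathscr{E}$ is free) and is surjective (by definition of $\mathscr{C}$). The main task is to show $\mathscr{I}\subseteq\ker\Xi$, which will produce $\widetilde\Xi$. Fix $u=\sum_w a_w w\in\mathscr{I}$ of degree $\alpha$ and set $\theta_w:=\theta_{i_1,l_1}*\cdots*\theta_{i_r,l_r}$ for $w=S_{i_1,l_1}\cdots S_{i_r,l_r}$. I must verify that $\sum_w a_w\theta_w(x)=0$ for every $x\in\Lambda(\alpha)$. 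Iterating the convolution formula \eqref{eq:product} yields $\theta_w(x)=\chi(Y_x(w))$, where $Y_x(w)$ is the projective variety of $x$-stable $I$-graded flags $0=L_0\subset L_1\subset\cdots\subset L_r=V$ with $\underline{\dim}\,L_k/L_{k-1}=l_k\alpha_{i_k}$ and with $x_h$ vanishing on $L_k/L_{k-1}$ for all $h\in\Omega(i_k)$.

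I would introduce the $\C^*$-action $\lambda\cdot(x_h,x_{\bar h})=(x_h,\lambda x_{\bar h})$ on $\overline E(\alpha)$, which preserves $\mu^{-1}(0)$ and $\Lambda(\alpha)$ and commutes with the $G(\alpha)$-action. At a $\C^*$-fixed point ($x_{\bar h}=0$), $x$ is determined by the $Q$-representation $M(x):=(V,(x_h)_{h\in\Omega})$, the $x$-invariance of a flag collapses to invariance under the $\Omega$-components, and $Y_x(w)$ is exactly the variety $X_{M(x)}(w)$ of $1$-nilpotent flags in $M(x)$. Hence $\theta_w(x)=\chi(X_{M(x)}(w))=\langle w,M(x)\rangle$, and $\sum_w a_w\theta_w$ vanishes on the fixed locus because $u\in\mathscr{I}$. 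For $\lambda\in\C^*$, a subspace $L$ is $x$-invariant iff it is $(\lambda\cdot x)$-invariant, so $Y_{\lambda\cdot x}(w)=Y_x(w)$ and $\theta_w$ is $\C^*$-invariant on every non-fixed orbit. A fixed-point localization, applied to the incidence variety $\{(x,L):x\in\Lambda(\alpha),\,L\in Y_x(w)\}\to\Lambda(\alpha)$ via Lemma~\ref{lem:Euler}(c) and (d), then transfers the vanishing from the $\C^*$-fixed locus to all of $\Lambda(\alpha)$, giving $\mathscr{I}\subseteq\ker\Xi$.

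The hardest part is precisely this propagation step: $Y_x(w)$ demands invariance under both the $\Omega$- and $\bar\Omega$-components of $x$, whereas in the limit $x_{\bar h}\to0$ the invariance relaxes to $\Omega$-invariance only, so the fibers $Y_x(w)$ may genuinely jump at the fixed-point limit. Ensuring that this jump is recorded correctly by the $\C^*$-equivariant Euler-characteristic formulas of Lemma~\ref{lem:Euler}, and thus aligning the convolution values of $\theta_w$ with the pairing $\langle w,M\rangle$ uniformly over $\Lambda(\alpha)$, is the main technical content of the proof.
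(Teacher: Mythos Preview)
Your top-level strategy---factor Bozec's isomorphism as $\widetilde\Xi\circ\Phi$ and observe that an isomorphism written as a composite of two surjections forces each factor to be an isomorphism---is sound, and is exactly how the paper packages Proposition~\ref{prop:algC}, Theorem~\ref{thm:main_a}, and the present proposition into Theorem~\ref{thm:main_c}. The core computation $\theta_w(x)=\chi(X_{M}(w))=\langle w,M\rangle$ for $x$ a representation of $Q$ is also precisely what the paper does (using the identification flagged just before Proposition~\ref{prop:algC}); from this single calculation the paper concludes directly that a relation $\sum_w a_w w=0$ holds in $\mathscr R$ if and only if $\sum_w a_w\theta_w=0$ holds in $\mathscr C$, with no $\C^*$-action entering at all.

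The genuine gap in your proposal is the propagation step. Lemma~\ref{lem:Euler}(c),(d) are equalities of Euler characteristics of varieties, not pointwise identities of constructible functions. From the facts that $\sum_w a_w\theta_w$ is constant along each nontrivial $\C^*$-orbit and vanishes on the fixed locus you cannot conclude that it vanishes on all of $\Lambda(\alpha)$: a $\C^*$-invariant constructible function is free to take one value on an open orbit and a different value at the limiting fixed point, and this is exactly the fibre-jump in $Y_x(w)$ that you yourself flag. Applying fixed-point localization to the incidence variety $\{(x,L):L\in Y_x(w)\}$ yields a single \emph{global} Euler-characteristic identity, which carries no information about the individual values $\theta_w(x)$ at a chosen $x$. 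Thus your argument, as written, does not establish $\mathscr I\subseteq\ker\Xi$, and the surjection $\widetilde\Xi:\mathscr R\to\mathscr C$ on which your composition argument rests has not actually been constructed.
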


\begin{proof} \ Given a word $w=S_{i_1, l_1} \cdots S_{i_r, l_r}$ in
$E$, we denote $\theta_w := \theta_{i_1, l_1} * \cdots *
\theta_{i_r, l_r}$. Then for any representation $(M,x)$ of $Q$ with
dimension vector $\alpha = l_1 \alpha_{i_1} + \cdots + l_r
\alpha_{i_r}$, we have
\begin{equation*}
\begin{aligned}
& \theta_{w}(x) = \theta_{i_1, l_1} *  \cdots * \theta_{i_r, l_r} (x) \\
& = \int_{L=(0 \subset L_{1} \subset \cdots \subset L_{r}=M)}
\theta_{i_1, l_1}(x|_{L_1}) \theta_{i_2, l_2}(x|_{L_2/L_1}) \cdots
\theta_{i_r, l_r}(x|_{L_{r}/L_{r-1}}) d\chi\\
&= \chi\left( \left\{ L=(0  \subset L_{1} \subset \cdots \subset
L_{r}=M) \mid \begin{aligned} & {\rm (i)} \,\underline{\dim} \,
L_{k}/L_{k-1}
= l_k \alpha_{i_k},\\
& {\rm (ii)}\, x_h = 0 \ \ \text{on} \ L_{k}/L_{k-1} \\
& \quad \ \ \text{for all} \ h \in \Omega(i_k)
\end{aligned} \right\} \right) \\
&= \chi(X_{M}(w)) = \langle w, M \rangle .
\end{aligned}
\end{equation*}

\vskip 2mm

Therefore, whenever there is a relation $\sum_{w} a_{w} w =0$ in
${\mathscr R}$, we have the corresponding relation $\sum_{w} a_{w}
\theta_{w} =0$ in ${\mathscr C}$, and vice versa, which proves our
claim.
\end{proof}

\vskip 3mm

Now we obtain the {\it Schofield construction} of Borcherds-Bozec
algebras.

\begin{theorem} \label{thm:main_c} \
{\rm The homomorphisms $\Phi$ and $\Phi_{0}$ are isomorphisms.}
\end{theorem}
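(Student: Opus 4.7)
The plan is to derive Theorem \ref{thm:main_c} by composing the two presentations of the same algebra that the paper has already put in place. From Theorem \ref{thm:main_a} we already have the surjective algebra homomorphism $\Phi: U^+(\g_Q) \to \mathscr{R}$ with $e_{il} \mapsto S_{i,l}$ and its Lie-algebra restriction $\Phi_0: \g_Q^+ \to \mathscr{L}$. To upgrade these to isomorphisms, I will exhibit a one-sided inverse by passing through the convolution algebra $\mathscr{C}$ of characteristic functions on the Lagrangian varieties $\Lambda(\alpha)$.

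First I would invoke Proposition \ref{prop:main_b}, which identifies $\mathscr{R} \xrightarrow{\sim} \mathscr{C}$ by $S_{i,l} \mapsto \theta_{i,l}$, and then Bozec's Proposition \ref{prop:algC}, which identifies $U(\g_Q^+) \xrightarrow{\sim} \mathscr{C}$ by $e_{il} \mapsto \theta_{i,l}$. Composing the inverse of the first with the second produces an algebra homomorphism $\Psi: \mathscr{R} \to U^+(\g_Q)$ with $S_{i,l} \mapsto e_{il}$. Since $\Phi \circ \Psi$ and $\Psi \circ \Phi$ agree with the identity on the generating sets $\{S_{i,l}\}$ and $\{e_{il}\}$ respectively, they are the identity; hence $\Phi$ is an isomorphism of associative algebras.

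Next I would promote this to the Lie-algebra statement for $\Phi_0$. By construction $\Phi$ is an algebra map that sends the primitive generators $e_{il}$ of $U(\g_Q^+)$ to the primitive generators $S_{i,l}$ of $\mathscr{R}$, so it is in fact a bi-algebra isomorphism with respect to the coproduct $\Delta$ defined on both sides by $x \mapsto x \otimes 1 + 1 \otimes x$ on generators. A bi-algebra isomorphism restricts to a bijection between spaces of primitive elements. Over $\C$ (characteristic zero), the space of primitives of the enveloping bi-algebra $U(\g_Q^+)$ is exactly $\g_Q^+$ (by the Milnor--Moore theorem, or equivalently Theorem \ref{thm:primitive}(b) combined with the standard fact that the primitives of an enveloping algebra recover the original Lie algebra), while the primitives of $\mathscr{R}$ are $\mathscr{L}$ by definition. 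Therefore $\Phi_0 = \Phi|_{\g_Q^+}: \g_Q^+ \to \mathscr{L}$ is a Lie-algebra isomorphism.

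The only real subtlety to be careful about is that Propositions \ref{prop:main_b} and \ref{prop:algC} really do both send the generator labeled by $(i,l)$ to the \emph{same} $\theta_{i,l}$, so that the composite $\Psi$ is a well-defined two-sided inverse of $\Phi$ rather than merely an abstract isomorphism; this is immediate from the uniform normalization used in the paper. Beyond that, no new calculation is needed: all the Serre-type relations have been verified in Propositions \ref{prop:Serre_a}--\ref{prop:Serre_b}, and the hard inputs (the 1-1 correspondence of Proposition \ref{prop:pos roots} and the geometric realization of $U^+(\g_Q)$ via perverse sheaves underlying Proposition \ref{prop:algC}) are already in place.
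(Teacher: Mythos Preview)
Your proposal is correct and follows essentially the same route as the paper's proof, which simply cites Proposition \ref{prop:algC}, Theorem \ref{thm:main_a}, and Proposition \ref{prop:main_b} and leaves the composition to the reader. Your additional step of deducing the $\Phi_0$ statement by restricting the bi-algebra isomorphism to primitives is a sound (and more detailed) way to complete what the paper leaves implicit; alternatively, once $\Phi$ is known to be injective, the injectivity of its restriction $\Phi_0$ is immediate.
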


\begin{proof}
Our assertion follows from Proposition \ref{prop:algC}, Theorem
\ref{thm:main_a} and Proposition \ref{prop:main_b}.
\end{proof}

\vskip 10mm

\end{document}